\numberwithin{equation}{section}
\numberwithin{equation}{section}		
\numberwithin{figure}{section}			
\numberwithin{table}{section}				
\newcommand{\babs}[1]{\left|{#1}\right|}
\newcommand{\bnorm}[1]{\left|\left|{#1}\right|\right|}
\newcommand{\vect}[1]{\boldsymbol{\mathbf{#1}}}
\DeclareFontShape{T1}{lmr}{b}{sc}{<->ssub*cmr/bx/sc}{}
\DeclareFontShape{T1}{lmr}{bx}{sc}{<->ssub*cmr/bx/sc}{}
\newcommand{\abs}[1]{\lvert#1\rvert}
\newcommandx{\unsure}[2][1=]{\todo[linecolor=red,backgroundcolor=red!25,bordercolor=red,#1]{#2}}
\newcommandx{\change}[2][1=]{\todo[linecolor=blue,backgroundcolor=blue!25,bordercolor=blue,#1]{#2}}
\newcommandx{\info}[2][1=]{\todo[linecolor=OliveGreen,backgroundcolor=OliveGreen!25,bordercolor=OliveGreen,#1]{#2}}
\newcommandx{\improvement}[2][1=]{\todo[linecolor=black,backgroundcolor=black!25,bordercolor=black,#1]{#2}}
\newcommandx{\thiswillnotshow}[2][1=]{\todo[disable,#1]{#2}}
\crefname{proposition}{Proposition}{Propositions}
\crefname{equation}{}{}
\newtheorem{thm}{Theorem}[section]
\newtheorem{lemma}[thm]{Lemma}
\newtheorem{prop}[thm]{Proposition}
\newtheorem{definition}{Definition}[section]
\newtheorem{rem}[definition]{Remark}
\crefname{assumption}{Assumption}{Assumptions}
\crefname{definition}{Definition}{Definitions}
\crefname{corollary}{Corollary}{Corollaries}
\crefname{enumi}{item}{items}
\DeclareMathOperator{\R}{\mathbb{R}}
\DeclareMathOperator{\C}{\mathbb{C}}
\renewcommand{\tilde}{\widetilde}
\renewcommand{\hat}{\widehat}
\renewcommand{\bar}[1]{\overline{#1}}
\newcommand{\D}{\mathcal{D}}
\newcommand{\K}{\mathcal{K}}
\newcommand{\p}{\partial}
\newcommand{\dx}{\mathrm{d}}
\renewcommand{\Re}{\mathrm{Re}}
\newcommand{\nm}{\noalign{\smallskip}}
\newcommand{\ds}{\displaystyle}
\renewcommand{\epsilon}{\varepsilon}
\renewcommand{\tilde}{\widetilde}
\renewcommand{\hat}{\widehat}
\begin{document}

\title[The non-Hermitian skin effect with three-dimensional long-range coupling]{The non-Hermitian skin effect with three-dimensional long-range coupling}

 \author[H. Ammari]{Habib Ammari}
\address{\parbox{\linewidth}{Habib Ammari\\
 ETH Z\"urich, Department of Mathematics, Rämistrasse 101, 8092 Z\"urich, Switzerland}}
\email{habib.ammari@math.ethz.ch}
\thanks{}

\author[S. Barandun]{Silvio Barandun}
 \address{\parbox{\linewidth}{Silvio Barandun\\
 ETH Z\"urich, Department of Mathematics, Rämistrasse 101, 8092 Z\"urich, Switzerland}}
 \email{silvio.barandun@sam.math.ethz.ch}

\author[J. Cao]{Jinghao Cao}
 \address{\parbox{\linewidth}{Jinghao Cao\\
 ETH Z\"urich, Department of Mathematics, Rämistrasse 101, 8092 Z\"urich, Switzerland}}
\email{jinghao.cao@sam.math.ethz.ch}

\author[B. Davies]{Bryn Davies}
 \address{\parbox{\linewidth}{Bryn Davies\\
Department of Mathematics, Imperial College London, 180 Queen's Gate, London SW7~2AZ, UK}}
\email{bryn.davies@imperial.ac.uk}

 \author[E.O. Hiltunen]{Erik Orvehed Hiltunen}
\address{\parbox{\linewidth}{Erik Orvehed Hiltunen\\
Department of Mathematics, Yale University, 10 Hillhouse Ave,
New Haven, CT~06511, USA}}
\email{erik.hiltunen@yale.edu}

\author[P. Liu]{Ping Liu}
 \address{\parbox{\linewidth}{Ping Liu\\
 ETH Z\"urich, Department of Mathematics, Rämistrasse 101, 8092 Z\"urich, Switzerland}}
\email{ping.liu@sam.math.ethz.ch}

\maketitle

\begin{abstract}
We study the non-Hermitian skin effect in a three-dimensional system of finitely many subwavelength resonators with an imaginary gauge potential. 
We introduce a discrete approximation of the eigenmodes and eigenfrequencies of the system in terms of the eigenvectors and eigenvalues of the so-called gauge capacitance matrix $\mathcal C_{N}^{\gamma}$, which is a dense matrix due to long-range interactions in the system. Based on translational invariance of this matrix and the decay of its off-diagonal entries,  we prove the condensation of the eigenmodes at one edge of the structure by showing the exponential decay of its pseudo-eigenvectors. In particular, we consider a range-$k$ approximation to keep the long-range interaction to a certain extent, thus obtaining a $k$-banded gauge capacitance matrix $\mathcal C_{N,k}^{\gamma}$. Using techniques for Toeplitz matrices and operators, we establish the exponential decay of the pseudo-eigenvectors of $\mathcal C_{N,k}^{\gamma}$ and demonstrate that they approximate those of the gauge capacitance matrix $\mathcal C_{N}^{\gamma}$ well. Our results are numerically verified. In particular, we show that long-range interactions affect only the first eigenmodes in the system. As a result, a tridiagonal approximation of the gauge capacitance matrix, similar to the nearest-neighbour approximation in quantum mechanics, provides a good approximation for the higher modes. Moreover, we also illustrate numerically the behaviour of the eigenmodes and the stability of the non-Hermitian skin effect with respect to disorder in a variety of three-dimensional structures.

\end{abstract}

\date{}

\bigskip

\noindent \textbf{Keywords.}   Non-Hermitian systems,  non-Hermitian skin effect, subwavelength resonators, imaginary gauge potential, stability, Toeplitz matrix.\par

\bigskip

\noindent \textbf{AMS Subject classifications.}
35B34, 
35P25, 
35C20, 
81Q12.  
\\

\section{Introduction}

The non-Hermitian skin effect is the phenomenon whereby  a large proportion 
of  the bulk eigenmodes of a non-Hermitian system are localised at one edge of an open chain of resonators. It has applications in topological photonics, phononics, and other condensed matter systems \cite{ghatak.brandenbourger.ea2020Observation,skinadd1,skinadd2, skinadd3}. It is one of the most promising and exciting research topics in physics in recent years \cite{zhang.zhang.ea2022review, okuma.kawabata.ea2020Topological,lin.tai.ea2023Topological,yokomizo.yoda.ea2022NonHermitian,skinadd4,skinadd5,borgnia.kruchkov.ea2020Nonhermitian}.

In our recent work \cite{ammari2023mathematical}, the non-Hermitian skin effect in the subwavelength regime is studied using first-principle mathematical analysis for one-dimensional systems of subwavelength resonators. An imaginary gauge potential (in the form of a first-order directional derivative) is added inside the resonators to break Hermiticity. Explicit asymptotic expressions for the subwavelength eigenfrequencies and eigenmodes are derived using a gauge capacitance matrix formulation of the problem (which is a reformulation of the standard capacitance matrices that are commonplace in Hermitian subwavelength physics and electrostatics). Moreover, the exponential decay of eigenmodes and their accumulation at one edge of the structure (the non-Hermitian skin effect) is shown to be induced by the Fredholm index of an associated Toeplitz operator. In \cite{SkinStability}, the robustness of the non-Hermitian skin effect in one dimension with respect to random imperfections in the system is quantified.  Moreover,  the competition between the non-Hermitian skin effect and Anderson localisation is illustrated. 

The generalisation of our previous work to higher-dimensional systems is important.  Although it is well-known that the skin effect can be experimentally realised in three-dimensional systems \cite{experimental3d,zhang.zhang.ea2022review}, there is no mathematical framework to theoretically support the condensation effect in three dimensions.  In this paper, we consider three-dimensional systems of subwavelength resonators with imaginary gauge potentials. 
As in the one-dimensional case, by using an asymptotic methodology,
we can approximate the subwavelength eigenfrequencies and eigenmodes of a finite chain of resonators by the eigenvalues of a 
so-called  \emph{gauge capacitance matrix} $\mathcal C_N^{\gamma}$. The main difference compared to the one-dimensional case is that the three-dimensional gauge capacitance matrix is not tridiagonal. Its off-diagonal entries account for long-range interactions, which is inherent in three-dimensional systems. Nevertheless, inspired by the nearest-neighbour approximation in quantum mechanics, we consider a range-$k$ approximation to keep the long-range interaction to a certain extent, thus obtaining a $k$-banded gauge capacitance matrix $\mathcal C_{N,k}^{\gamma}$. Using techniques for the Toeplitz matrices and operators, we prove the exponential decay of the pseudo-eigenvectors of such $k$-banded matrix and demonstrate that they approximate those of the gauge capacitance matrix $\mathcal C_{N}^{\gamma}$ well. We numerically verify our results by showing that the long-range interactions affect only the first eigenmodes in the system. A tridiagonal (nearest-neighbour) approximation of the gauge capacitance matrix, provides a good approximation for the higher modes, which successively improves as we increase the range of interactions. Finally, we illustrate numerically the behaviour of the eigenmodes and the stability of the non-Hermitian skin effect with respect to disorder in a variety of structures. 

 
The paper is organised as follows. In Section \ref{sect2}, we present the mathematical setup of the problem.  Section \ref{sect3} is devoted to the derivations of the fundamental solution of a (non-reciprocal) Helmholtz operator with an imaginary gauge potential and the analysis of the associated layer potentials. In Section \ref{sect4} we introduce our discrete approximation of the problem which provides approximations of the eigenfrequencies and eigenmodes of a finite chain of subwavelength resonators in terms of the eigenvalues and eigenvectors of the gauge capacitance matrix. Given this discrete formulation, we study in Section \ref{sect5} the properties of the gauge capacitance matrix and show that it can be approximated by a Toeplitz matrix. In Section \ref{sect6},  
we prove the exponential decay of the pseudo-eigenvectors of the gauge capacitance matrix and deduce 
the condensation of the subwavelength eigenmodes at one edge of the structure. In Section \ref{sect7}, we numerically illustrate our main findings in this paper for a variety of subwavelength resonator systems. Moreover, we show the robustness of the skin effect with respect to changes in the position of the resonators. The paper ends with some concluding remarks. Appendix A is devoted to the characterisation of the spectra and pseudo-spectra of perturbed banded Toeplitz matrices.

\section{Problem setting}  \label{sect2}

We will consider an array of material inclusions that are identical in size and shape and which have an imaginary gauge potential in their interior. To be more precise, we let $D \subset [0,1) \times \mathbb{R}^2$ be a  simply connected, bounded domain of class $C^{1,s}$ for some $0<s<1$. Then, we consider a one-dimensional array given by $\mathcal{D} := \bigcup_{i=1}^N D_i$, where $D_i$ is the translated domain $D_i := D + (i,0,0)^\top$, with the superscript $\top$ denoting the transpose. 

A resonant frequency $\omega\in\C$ is such that $\Re(\omega)>0$ and there exists a non-trivial associated eigenmode $u$ solution to 
\begin{equation}
    \label{helmholtz}
    \begin{cases}
    \ds \Delta u + \omega^2u = 0 \ & \text{in } \R^3 \setminus \overline{\mathcal{D}}, \\
    \nm
    \ds \Delta u + \omega^2 u +\gamma \partial_1 u = 0 \ & \text{in } \mathcal{D}, \\
    \nm
    \ds  u|_{+} -u|_{-}  = 0  & \text{on } \p \mathcal{D}, \\
    \nm
    \ds \left.\delta \frac{\p u}{\p \nu} \right|_{+} - \left.\frac{\p u}{\p \nu} \right|_{-} = 0 & \text{on } \p \mathcal{D}, \\
    \nm
    \ds u \text{ satisfies an outgoing radiation condition}. &   
    \end{cases}
\end{equation}
Here,  $\nu$ is the outgoing normal to $\partial D$, $|_{+}$ and $|_{-}$ denote the limits from the outside and inside of $\mathcal{D}$. $\delta >0$ is a non-dimensional material contrast parameter that will be assumed to be small, such that the system is in a high-contrast regime. Finally, the first-order directional derivative with coefficient $\gamma \neq 0$ corresponds to an imaginary gauge potential within the inclusions. This term breaks the time-reversal symmetry of the problem and is the crucial mechanism responsible for the condensation effects that will be observed here. For the outgoing radiation condition (which reduces to the Sommerfeld radiation condition for $\omega$ real), we refer the reader to \cite{vodev,santosa,alexander}. 

We are interested in  the subwavelength regime. We look for the subwavelength resonant frequencies and their associated eigenmodes for the system of resonators $\mathcal{D}$ within this regime, which we characterise by satisfying $\omega \to 0$ as $\delta\to 0$. In order to approximate these subwavelength resonant frequencies and eigenmodes, we perform an asymptotic analysis in a high-contrast limit, given by $\delta\to 0$. This limit recovers subwavelength resonant frequencies and eigenmodes, while keeping the characteristic size of the resonators fixed. As it will be seen in \Cref{sect4}, it leads to a discrete approximation of (\ref{helmholtz}) in terms of a so-called gauge capacitance matrix. By investigating the Toeplitz structure of the gauge capacitance matrix in \Cref{sect5}, we will prove in \Cref{sect6} the condensation of the subwavelength eigenmodes at one edge of the structure $\mathcal{D}$. 

\section{Fundamental solutions and single layer potentials}
 \label{sect3}
    Let $G^\omega$ be the outgoing fundamental solution of the Helmholtz operator
       $\Delta + \omega^2.$ Let $\mathcal{S}_D^{\omega}$ be the associated single layer potential
       $$
       \mathcal{S}_D^{\omega}:  \phi \in L^2(\partial D)\mapsto \int_{\partial D}G^\omega(x-y)\phi(y)\mathrm{d}\sigma(y).
       $$
    We define the Neumann-Poincar\'e operator associated to $G^\omega$ to be the operator
    $\mathcal{K}_D^{\omega,*} : L^2(\partial D) \rightarrow  L^2(\partial D) $ given by
    \begin{equation} \label{NPop}
        \mathcal{K}_D^{\omega,*}[\phi](x)=\int_{\partial D} \frac{\partial}{\partial \nu_x} G^\omega (x-y) \phi(y) \mathrm{d} \sigma(y), \quad x \in \partial D, \phi \in L^2(\partial D).
    \end{equation}
We recall the following well-known results regarding $\mathcal{S}_D^{\omega}[\phi]$; see, for instance, \cite{nedelec,bookspectral}.  

\begin{lemma}[Jump conditions] \label{lemmsingle} For the single layer potential $\mathcal{S}_D^\omega$, it holds
    \begin{equation}
    \left.\mathcal{S}_D^\omega[\phi]\right|_{+}=\left.\mathcal{S}_D^\omega[\phi]\right|_{-}, \quad \phi \in L^2(\partial D). 
    \end{equation}
   The associated Neumann--Poincar\'e operator $\mathcal{K}_D^{\omega,*} $ satisfies
    \begin{equation}
    \left.\frac{\partial}{\partial \nu} \mathcal{S}_D^\omega[\phi]\right|_{ \pm}=\left( \pm \frac{1}{2} I+\mathcal{K}_D^{\omega, *}\right)[\phi], \quad \phi \in L^2(\partial D).
    \end{equation}
\end{lemma}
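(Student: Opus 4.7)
The plan is to reduce both statements to the classical Laplace case by splitting $G^\omega$ into its leading static singularity and a smooth remainder. In three dimensions the outgoing fundamental solution is
\[
G^\omega(x) = -\frac{e^{\iu\omega|x|}}{4\pi|x|},
\]
which decomposes as $G^\omega = G^0 + H^\omega$, where $G^0(x) = -\tfrac{1}{4\pi|x|}$ is the fundamental solution of the Laplacian and
\[
H^\omega(x) = -\frac{e^{\iu\omega|x|}-1}{4\pi|x|}
\]
extends to a function that is $C^\infty$ on $\R^3$ (analytic in $|x|$). Correspondingly, $\mathcal{S}_D^\omega = \mathcal{S}_D^0 + \mathcal{R}^\omega$, where $\mathcal{R}^\omega$ is the integral operator with kernel $H^\omega(x-y)$.

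For the static part $\mathcal{S}_D^0$, the two assertions are entirely classical for $C^{1,s}$ boundaries and can be found in the references already cited (Nédélec; Ammari--Kang). The proof of continuity across $\partial D$ uses dominated convergence and the local integrability of the $1/|x-y|$ kernel against a surface measure in $\R^3$. The jump formula $\p_\nu \mathcal{S}_D^0[\phi]|_{\pm} = (\pm\tfrac12 I + \mathcal{K}_D^{0,*})[\phi]$ is proved by flattening $\partial D$ locally, splitting the kernel into a principal (solid-angle) contribution and a weakly singular remainder, and computing the $\pm 2\pi$ solid angle subtended by a smooth surface at a boundary point, which yields the $\pm 1/2$ jump. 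The $C^{1,s}$ regularity of $\partial D$ guarantees that $\mathcal{K}_D^{0,*}$ is bounded (in fact compact) on $L^2(\partial D)$.

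For the smooth correction $\mathcal{R}^\omega$, since $H^\omega$ is continuous (and in fact $C^\infty$) across $x=0$, the function $\mathcal{R}^\omega[\phi]$ is of class $C^1$ in a neighbourhood of $\partial D$ on both sides for any $\phi\in L^2(\partial D)$. In particular, $\mathcal{R}^\omega[\phi]$ and its normal derivative are continuous across $\partial D$, so this term contributes no jump in either relation. The normal-derivative contribution from $\mathcal{R}^\omega$ is precisely $(\mathcal{K}_D^{\omega,*}-\mathcal{K}_D^{0,*})[\phi]$, which, being given by the bounded kernel $\p_{\nu_x}(G^\omega-G^0)(x-y)$, is continuous across $\partial D$. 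Adding the static jump and the continuous correction yields the claimed identities.

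The only substantive obstacle is the standard singular-integral analysis of $\mathcal{S}_D^0$ on a $C^{1,s}$ boundary; once this is invoked from the literature, the passage to $\mathcal{S}_D^\omega$ is purely perturbative, since the extra term $H^\omega$ removes the singularity and therefore contributes nothing beyond a continuous operator on both sides of $\partial D$.
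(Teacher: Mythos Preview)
Your argument is correct and is the standard reduction to the Laplace case via the decomposition $G^\omega = G^0 + H^\omega$ with smooth remainder. The paper itself gives no proof of this lemma at all: it simply states the result as well known and cites the references (N\'ed\'elec; the spectral-theory monograph), so your write-up in fact supplies more detail than the paper does.
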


It is also well-known that $\mathcal{S}^0_D:L^2(\p D) \to H^1(\p D)$ is invertible. Here, $H^1$ is usual Sobolev space of square-integrable functions whose weak derivative is square integrable.  

We now turn to the operator with an imaginary gauge potential $\Delta+\omega^2 +\gamma \partial_{x_1}$. The following result provides a fundamental solution for this operator. 
\begin{prop}
    Introduce 
    \begin{equation}
    \label{green_skin}
    G_\gamma^\omega(x) = -\frac{ \mathrm{exp}({-{\gamma}x_1/{2}+\mathrm{i}\sqrt{\omega^2-\gamma^2/4}\abs{x}})}{4\pi \abs{x}}.        
    \end{equation}
    Then, $G_\gamma^\omega$ satisfies 
    \begin{equation}
    \label{pde_skin}
        \Delta G_\gamma^\omega +\omega^2  G_\gamma^\omega +\gamma \partial_{x_1} G_\gamma^\omega = \delta_0 \qquad  \text{in } \mathbb{R}^3.  
    \end{equation}
\end{prop}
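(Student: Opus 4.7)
The plan is to exploit a gauge transformation to reduce the problem to the standard Helmholtz case. Specifically, I would write
\[
G_\gamma^\omega(x) \;=\; e^{-\gamma x_1/2}\, H(x), \qquad H(x) := -\frac{\exp\bigl(\iu\sqrt{\omega^2 - \gamma^2/4}\,\abs{x}\bigr)}{4\pi\abs{x}},
\]
and observe that $H$ is (by standard theory) the outgoing fundamental solution of the ordinary Helmholtz operator with wavenumber $k:=\sqrt{\omega^2-\gamma^2/4}$, so that
\[
\Delta H + \bigl(\omega^2 - \gamma^2/4\bigr) H \;=\; \delta_0 \qquad \text{in } \mathbb{R}^3,
\]
in the sense of distributions.

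Next I would compute the action of $L := \Delta + \omega^2 + \gamma\partial_{x_1}$ on the product $e^{-\gamma x_1/2} H$ using the product rule. A direct calculation gives
\[
\partial_{x_1}\!\bigl(e^{-\gamma x_1/2}H\bigr) \;=\; e^{-\gamma x_1/2}\bigl(\partial_{x_1}H - \tfrac{\gamma}{2}H\bigr),
\]
\[
\Delta\!\bigl(e^{-\gamma x_1/2}H\bigr) \;=\; e^{-\gamma x_1/2}\bigl(\Delta H - \gamma\,\partial_{x_1}H + \tfrac{\gamma^2}{4}H\bigr).
\]
Adding $\omega^2 e^{-\gamma x_1/2}H$ and $\gamma e^{-\gamma x_1/2}(\partial_{x_1}H - \tfrac{\gamma}{2}H)$ to the Laplacian term, the $\gamma\,\partial_{x_1}H$ contributions cancel and the $H$ coefficients combine to give $\omega^2 - \gamma^2/4$, so
\[
L\bigl(e^{-\gamma x_1/2}H\bigr) \;=\; e^{-\gamma x_1/2}\Bigl[\Delta H + (\omega^2 - \gamma^2/4)H\Bigr] \;=\; e^{-\gamma x_1/2}\delta_0 \;=\; \delta_0,
\]
where in the last step I use that multiplication of $\delta_0$ by the smooth function $e^{-\gamma x_1/2}$ (which equals $1$ at the origin) leaves $\delta_0$ unchanged.

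There is no real obstacle here: this is essentially a one-line computation once the correct gauge factor is identified. The only point to be careful about is that the identity above holds in the distributional sense, so the chain-rule manipulations must be justified as distributional derivatives, which is immediate because $e^{-\gamma x_1/2}$ is smooth; and one should remark that the specific branch $k=\sqrt{\omega^2-\gamma^2/4}$ chosen for $H$ corresponds precisely to the outgoing radiation condition satisfied by $G_\gamma^\omega$, matching the convention for the fundamental solution of $\Delta+\omega^2+\gamma\partial_{x_1}$ used later in the paper.
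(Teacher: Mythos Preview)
Your proof is correct and follows essentially the same approach as the paper: both use the gauge transformation $G_\gamma^\omega = e^{-\gamma x_1/2} F_\gamma^\omega$ with $F_\gamma^\omega$ the standard Helmholtz fundamental solution at wavenumber $\sqrt{\omega^2-\gamma^2/4}$, and then verify that this intertwines $\Delta+\omega^2+\gamma\partial_{x_1}$ with $\Delta+\omega^2-\gamma^2/4$. Your version spells out the product-rule computation and the $e^{-\gamma x_1/2}\delta_0=\delta_0$ step more explicitly than the paper does, but the argument is the same.
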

\begin{proof}
Observe that $F_\gamma^\omega (x) =-\frac{ \mathrm{exp}({\mathrm{i}\sqrt{\omega^2-\gamma^2/4}\abs{x}})}{4\pi \abs{x}}$ is a fundamental solution of the Helmholtz operator $\Delta+\omega^2-\gamma^2/4$. Letting
    \begin{equation}
        G_\gamma^\omega(x) = F_\gamma^\omega(x) e^{-\gamma x_1/2},
    \end{equation}
    we have 
    \begin{align}
    \left(\Delta+\omega^2-\gamma^2/4\right)F_\gamma^\omega(x) &= e^{\gamma x_1/2}\left( \Delta + \omega^2 + \gamma \partial_{x_1} \right)G_\gamma^\omega(x) \\
    & = \delta_0(x),
    \end{align}
    which proves the claim.    
\end{proof}
Observe that $G^\omega_\gamma$ does not satisfy the vanishing boundary condition as $x\to \infty$; nevertheless, as we will use $G^\omega_\gamma$ to represent the solution inside the compact domains $D_i$, the behaviour of $G^\omega_\gamma$ as  $x\to \infty$ does not affect the solution.

We now define the single layer potential associated to $G_{\gamma}^\omega$ by
\begin{equation}
{\tilde{\mathcal{S}}}_{D,\gamma}^\omega:\phi \in L^2(\partial D)\mapsto \int_{\partial D}G_\gamma^\omega(x-y)\phi(y)\mathrm{d}\sigma(y).
\end{equation}
Analogously to \eqref{NPop}, we also define the Neumann-Poincar\'e operator associated to (\ref{green_skin}) by  
\begin{equation}
    \tilde{\mathcal{K}}_{D,\gamma}^{\omega,*} [\phi](x)=\int_{\partial D} \frac{\partial}{\partial \nu_x} G^\omega_{\gamma}(x-y) \phi(y) \mathrm{d} \sigma(y), \quad x \in \partial D, \phi \in L^2(\partial D).
\end{equation}
Since $G^\omega_\gamma$ has the same singularity as $G^\omega$ at the origin,  similarly to Lemma \ref{lemmsingle}, we can obtain the following jump conditions for the normal derivative of ${\tilde{\mathcal{S}}}_{D,\gamma}^\omega$ across $\partial D$.
\begin{lemma} \label{lem:jump}
The Neumann-Poincar\'e operator $\tilde{\mathcal{K}}_{D,\gamma}^{\omega,*} $ satisfies 
    \begin{equation}
    \left.\frac{\partial}{\partial \nu} {\tilde{\mathcal{S}}}_{D,\gamma}^\omega[\phi]\right|_{ \pm}=\left( \pm \frac{1}{2} I+{\tilde{\mathcal{K}}}_{D,\gamma}^{\omega, *}\right)[\phi].
    \label{jumpomega}
    \end{equation}
\end{lemma}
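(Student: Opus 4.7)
The plan is to reduce the jump analysis of $\tilde{\mathcal{S}}_{D,\gamma}^\omega$ to a direct application of Lemma \ref{lemmsingle} by exploiting the factorisation already present in $G^\omega_\gamma$. First I would write
\[
G_\gamma^\omega(x-y) = e^{-\gamma x_1/2}\, F_\gamma^\omega(x-y)\, e^{\gamma y_1/2},
\]
where $F_\gamma^\omega(x) = -\frac{1}{4\pi|x|}\exp\bigl(\mathrm{i}\sqrt{\omega^2-\gamma^2/4}\,|x|\bigr)$ is the outgoing Helmholtz fundamental solution with wavenumber $k' := \sqrt{\omega^2-\gamma^2/4}$ (its branch chosen to match the outgoing prescription on $G^\omega_\gamma$). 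Substituting this identity in the definition of the gauge single layer potential yields the conjugation formula
\[
\tilde{\mathcal{S}}_{D,\gamma}^\omega[\phi](x) = e^{-\gamma x_1/2}\,\mathcal{S}_D^{k'}[\psi](x), \qquad \psi(y) := e^{\gamma y_1/2}\phi(y),
\]
where $\mathcal{S}_D^{k'}$ is the standard Helmholtz single layer potential, to which Lemma \ref{lemmsingle} applies verbatim.

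I would then differentiate this identity in the normal direction. Since $e^{-\gamma x_1/2}$ is smooth, the product rule gives
\[
\frac{\partial}{\partial \nu}\tilde{\mathcal{S}}_{D,\gamma}^\omega[\phi](x) = -\frac{\gamma \nu_1(x)}{2}\,e^{-\gamma x_1/2}\,\mathcal{S}_D^{k'}[\psi](x) + e^{-\gamma x_1/2}\,\frac{\partial}{\partial \nu}\mathcal{S}_D^{k'}[\psi](x).
\]
The first term on the right is continuous across $\partial D$, by the continuity of $\mathcal{S}_D^{k'}[\psi]$ stated in Lemma \ref{lemmsingle}. The second term jumps by $(\pm\tfrac{1}{2}I + \mathcal{K}_D^{k',*})[\psi](x)$, again by Lemma \ref{lemmsingle}. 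Using $e^{-\gamma x_1/2}\psi(x) = \phi(x)$ on $\partial D$, the $\pm\tfrac{1}{2}I$ contributions collapse exactly to $\pm\tfrac{1}{2}\phi(x)$, which is the desired jump.

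It remains to match the continuous remainder with $\tilde{\mathcal{K}}_{D,\gamma}^{\omega,*}[\phi](x)$. I would do this by applying the same product rule pointwise inside the defining integral for $\tilde{\mathcal{K}}_{D,\gamma}^{\omega,*}$ to
\[
\partial_{\nu_x}G_\gamma^\omega(x-y) = -\tfrac{\gamma \nu_1(x)}{2}\,G_\gamma^\omega(x-y) + e^{-\gamma x_1/2}\,e^{\gamma y_1/2}\,\partial_{\nu_x}F_\gamma^\omega(x-y),
\]
which reproduces exactly the two continuous summands above. I do not foresee any serious obstacle: once the factorisation is noted, the proof is essentially bookkeeping against Lemma \ref{lemmsingle}. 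The only subtlety worth flagging is the branch choice for $\sqrt{\omega^2-\gamma^2/4}$, which must be consistent with the outgoing radiation condition so that $F_\gamma^\omega$ is genuinely the outgoing Helmholtz fundamental solution; this inherits directly from the construction of $G^\omega_\gamma$ in the preceding proposition.
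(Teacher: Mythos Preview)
Your argument is correct. The paper itself does not give a detailed proof of this lemma; it simply remarks that $G_\gamma^\omega$ has the same singularity as $G^\omega$ at the origin (since the prefactor $e^{-\gamma(x_1-y_1)/2}$ is smooth and equals $1$ at $x=y$), and invokes the general principle that the jump of the normal derivative of a single layer potential is determined solely by the leading singularity of the kernel. Your route is different and more explicit: the conjugation identity $\tilde{\mathcal{S}}_{D,\gamma}^\omega[\phi] = e^{-\gamma x_1/2}\mathcal{S}_D^{k'}[e^{\gamma y_1/2}\phi]$ reduces the statement \emph{directly} to Lemma~\ref{lemmsingle} applied at the shifted wavenumber $k'=\sqrt{\omega^2-\gamma^2/4}$, with no appeal to an unstated general principle. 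The cost is the bookkeeping you outline (which you carry out correctly); the benefit is that everything is pinned to a result already stated in the paper. The only minor caveat is that Lemma~\ref{lemmsingle} is phrased for the outgoing Helmholtz kernel $G^\omega$, and $k'$ may be purely imaginary when $\omega^2<\gamma^2/4$; but the jump relations hold for complex wavenumbers as well, so this is not an obstacle.
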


The following result will be of use to us. 
\begin{prop}
The operator  ${\tilde{\mathcal{S}}}_{D,\gamma}^0 : L^2(\partial D) \rightarrow L^2(\partial D)$ is injective and hence has a left inverse 
\end{prop}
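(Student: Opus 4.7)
The plan is to prove injectivity directly; a (set-theoretic) left inverse then follows by defining it on the range of $\tilde{\mathcal S}_{D,\gamma}^0$. Suppose $\phi\in L^2(\partial D)$ satisfies $\tilde{\mathcal S}_{D,\gamma}^0[\phi]=0$ on $\partial D$, and set $u:=\tilde{\mathcal S}_{D,\gamma}^0[\phi]$, viewed as a function on all of $\mathbb{R}^3$. By construction $u$ is continuous across $\partial D$, satisfies $(\Delta+\gamma\partial_{x_1})u=0$ both in $D$ and in $\mathbb{R}^3\setminus\overline{D}$, and vanishes on $\partial D$ by hypothesis. The jump relation of \Cref{lem:jump} gives $\phi=\partial_\nu u|_{+}-\partial_\nu u|_{-}$, so it suffices to prove that $u\equiv 0$.

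To exploit the structure of the equation, I would apply the gauge transformation $v(x):=e^{\gamma x_1/2}u(x)$. A direct computation (already implicit in the proof of the previous proposition) yields $(\Delta+\gamma\partial_{x_1})u=e^{-\gamma x_1/2}(\Delta-\gamma^2/4)v$, so $v$ solves the modified Helmholtz (Yukawa) equation $(\Delta-\gamma^2/4)v=0$ inside and outside $D$, with Dirichlet data $v|_{\partial D}=0$. In the interior, testing against $v$ and integrating by parts gives
\[
\int_{D}\bigl(|\nabla v|^2+\tfrac{\gamma^2}{4}v^2\bigr)\,\mathrm{d}x=0,
\]
which forces $v=0$, hence $u=0$, in $D$.

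For the exterior problem I would first establish decay at infinity. Using the factorisation $G_\gamma^0(x)=F_\gamma^0(x)e^{-\gamma x_1/2}$ with $F_\gamma^0(x)=-(4\pi|x|)^{-1}e^{-\gamma|x|/2}$ from the previous proposition, one obtains
\[
v(x)=\int_{\partial D}F_\gamma^0(x-y)\,e^{\gamma y_1/2}\phi(y)\,\mathrm{d}\sigma(y),
\]
which is the single-layer potential associated with the Yukawa kernel $F_\gamma^0$. Since $F_\gamma^0$ and its gradient decay exponentially in every direction, both $v$ and $\nabla v$ decay exponentially as $|x|\to\infty$. Running the same integration-by-parts argument on $(\mathbb{R}^3\setminus\overline{D})\cap B_R$ and letting $R\to\infty$ kills the boundary term at infinity and yields $v=0$ in $\mathbb{R}^3\setminus\overline{D}$. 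The jump relation then gives $\phi=0$, proving injectivity.

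The main obstacle I anticipate is the exterior uniqueness, because the raw kernel $G_\gamma^0$ only decays in the half-space $x_1\to+\infty$ (the factor $e^{-\gamma x_1/2}$ grows in the opposite direction), so no off-the-shelf uniqueness theorem applies to $u$ directly. The point of the gauge transformation $v=e^{\gamma x_1/2}u$ is precisely that it simultaneously converts the non-self-adjoint drift problem into the coercive Yukawa equation \emph{and} restores exponential decay of the representation in all directions, so that the energy identity gives the required uniqueness with no boundary contribution at infinity.
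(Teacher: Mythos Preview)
Your proof is correct and follows essentially the same route as the paper: the gauge transformation $v=e^{\gamma x_1/2}u$ reducing to the Yukawa equation, the energy identity giving $v\equiv 0$, and the jump relation yielding $\phi=0$. The paper integrates over all of $\mathbb{R}^3$ at once (the $\partial D$ boundary terms vanish since $v|_{\partial D}=0$) and simply asserts $v(x)=O(|x|^{-1})$, whereas you make the interior/exterior split explicit and justify the decay via the Yukawa single-layer representation; your version is slightly more detailed but the argument is the same.
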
 
\begin{proof}
We assume that $\tilde{\mathcal{S}}_{D,\gamma}^0[\phi](x) = 0$ for $x\in \partial D$. For $x\in \R^3$, we define $u$ and $v$ as
$$u(x) = \tilde{\mathcal{S}}_{D,\gamma}^0[\phi](x) \quad \text{and} \quad v(x) =e^{\gamma x_1/2}u(x).$$
Since $\Delta u + \gamma \partial_{x_1} u = 0$, we find that $v$ satisfies the \emph{modified} Helmholtz equation
\begin{equation} \label{modhelm}
\begin{cases}\ds \Delta v - \frac{\gamma^2}{4} v = 0, \\ v|_{\partial D} = 0, \\ 
v(x) = O\left(|x|^{-1}\right) \quad \text{as } x\to \infty.
\end{cases}
\end{equation}
The modified Helmholtz equation is well-known to have a unique solution; indeed, if we multiply \eqref{modhelm} with $\overline{v}$ and integrate by parts, we have
$$\int_{\R^3}\left( |\nabla v|^2 + \frac{\gamma^2}{2}|v|^2\right) \dx \sigma =0,$$
which shows that $v(x) = 0$, and hence $u(x) = 0$. Finally, from \Cref{lem:jump}, we have that 
$$\phi = \frac{\partial u }{\partial \nu}\Big|_+ - \frac{\partial u }{\partial \nu}\Big|_- = 0,$$
which proves the claim.
\end{proof}


\section{Gauge capacitance matrix approximation} \label{sect4}

We are looking for non-trivial solutions to the non-reciprocal Helmholtz problem (\ref{helmholtz}) in the subwavelength regime. Problem (\ref{helmholtz}) can be reformulated using layer potential techniques. In fact, there exist density functions $\psi$ and $\phi$ such that the solution $u$ of (\ref{helmholtz}) can be represented as
\begin{equation}
\label{eq:ux_layerpotential}
    u(x)=
    \begin{cases}
        \tilde{\mathcal{S}}_{\mathcal{D},\gamma}^{\omega}[\psi], \ \ \ &x\in \mathcal{D},\\
        \mathcal{S}_\mathcal{D}^\omega[\phi], \ \ \ & x\in \mathbb{R}^3\backslash \mathcal{D}.
    \end{cases}
\end{equation}
Hence, for $\omega$ small enough, (\ref{helmholtz}) is equivalent to
\begin{equation}
    \mathcal{A}(\omega,\delta )\begin{pmatrix} \psi \\ \phi
    \end{pmatrix} = 0,
\end{equation}
with $\mathcal{A}(\omega,\delta ) : L^2(\partial \mathcal{D}) \times L^2(\partial \mathcal{D})  \rightarrow  H^1(\partial \mathcal{D}) \times L^2(\partial \mathcal{D})$ being given by
\begin{equation}
    \mathcal{A}(\omega,\delta ) = 
    \begin{pmatrix}
        \tilde{\mathcal{S}}_{\mathcal{D},\gamma}^{\omega} &  - \mathcal{S}_\mathcal{D}^{\omega}\\
        -\frac{1}{2}I+\tilde{\mathcal{K}}_{\mathcal{D},\gamma}^{\omega,*} & -\delta(\frac{1}{2}I+{\mathcal{K}}_{\mathcal{D}}^{\omega,*})
    \end{pmatrix}.
\end{equation}
Note that for $\omega = \delta = 0$, we have
\begin{equation}
\label{a00}
    \mathcal{A}(0,0) = 
    \begin{pmatrix}
        \tilde{\mathcal{S}}_{\mathcal{D},\gamma}^{0} &  - \mathcal{S}_\mathcal{D}^{0}\\
        -\frac{1}{2}I+\tilde{\mathcal{K}}_{\mathcal{D},\gamma}^{0,*} & 0
    \end{pmatrix}.
\end{equation}
In the next result, we show that $-\frac{1}{2}I+\tilde{\mathcal{K}}_{\mathcal{D},\gamma}^{0,*}$ has a non-trivial kernel. The main idea of the capacitance approximation is that the subwavelength resonant modes arise as perturbations of these kernel functions for small (but non-zero) $\delta$.  
\begin{lemma} \label{lemmakernel}
    If $\mathcal{D}$ consists of $N$ connected components  $D_i$, i.e., $ \mathcal{D}= \bigcup_{i=1}^N D_i$, then the kernel of the operator $-\frac{1}{2}I+\tilde{\mathcal{K}}_{\mathcal{D},\gamma}^{0,*}: L^2(\partial \mathcal{D}) \rightarrow L^2(\partial \mathcal{D})$ is $N$-dimensional and spanned by 
    \begin{equation}
        \psi_i = (\tilde{\mathcal{S}}_{\mathcal{D},\gamma}^{0} )^{-1}[\chi_{\partial D_i}],\ i=1,\ldots,N.
        \label{defpsi}
    \end{equation}
    Here (as usual), $\chi_{\partial D_i}$ denotes the indicator function of $\partial D_i$.
\end{lemma}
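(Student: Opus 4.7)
The plan is to prove the two inclusions of
\[
\ker\!\left(-\tfrac12 I + \tilde{\mathcal{K}}_{\mathcal{D},\gamma}^{0,*}\right) = \mathrm{span}\{\psi_1,\ldots,\psi_N\}
\]
together with the linear independence of the $\psi_i$. The central analytical fact to exploit is that the gauge-shifted operator at zero frequency admits the divergence form
\[
\Delta u + \gamma \partial_{x_1} u = e^{-\gamma x_1}\nabla\cdot\bigl(e^{\gamma x_1}\nabla u\bigr),
\]
which supplies a weighted energy identity with strictly positive weight $e^{\gamma x_1}$ and, as a consequence, uniqueness for the interior Dirichlet and Neumann problems attached to $\Delta + \gamma\partial_{x_1}$ on each connected component of $\mathcal{D}$.

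For the inclusion $\mathrm{span}\{\psi_i\} \subseteq \ker$, set $u_i := \tilde{\mathcal{S}}_{\mathcal{D},\gamma}^0[\psi_i]$. By the defining equation for $\psi_i$ one has $u_i|_{\partial D_j} = \delta_{ij}$, and $u_i$ solves the interior problem $(\Delta + \gamma\partial_{x_1})u_i = 0$ in $\mathcal{D}$. The locally constant function taking value $\delta_{ij}$ on $D_j$ is obviously a solution of this Dirichlet problem, and the weighted energy identity (test against $\bar u_i$) shows the solution is unique. Therefore $\nabla u_i \equiv 0$ in $\mathcal{D}$ and so $\partial_\nu u_i|_- = 0$ on $\partial\mathcal{D}$; by the jump relation \eqref{jumpomega} this is exactly the identity $(-\tfrac12 I + \tilde{\mathcal{K}}_{\mathcal{D},\gamma}^{0,*})[\psi_i] = 0$. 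Linear independence of $\psi_1,\ldots,\psi_N$ follows immediately from the injectivity of $\tilde{\mathcal{S}}_{\mathcal{D},\gamma}^{0}$ established in the preceding proposition, since the family $\{\chi_{\partial D_i}\}$ is linearly independent.

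For the reverse inclusion, let $\phi \in \ker(-\tfrac12 I + \tilde{\mathcal{K}}_{\mathcal{D},\gamma}^{0,*})$ and set $u := \tilde{\mathcal{S}}_{\mathcal{D},\gamma}^{0}[\phi]$, so that $\partial_\nu u|_- = 0$ by \eqref{jumpomega}. Integrating by parts against the divergence form yields
\[
0 \;=\; \int_{\mathcal{D}} \bar u\, \nabla\cdot\bigl(e^{\gamma x_1}\nabla u\bigr)\, dx \;=\; \int_{\partial\mathcal{D}} \bar u\, e^{\gamma x_1} \partial_\nu u|_-\, d\sigma \;-\; \int_{\mathcal{D}} e^{\gamma x_1}|\nabla u|^2\, dx \;=\; -\int_{\mathcal{D}} e^{\gamma x_1}|\nabla u|^2\, dx,
\]
so $\nabla u \equiv 0$ in $\mathcal{D}$ and $u$ is constant on each component, $u|_{D_i} = c_i$. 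Consequently $u|_{\partial\mathcal{D}} = \sum_{i=1}^N c_i \chi_{\partial D_i}$, and applying the left inverse of $\tilde{\mathcal{S}}_{\mathcal{D},\gamma}^{0}$ to both sides produces $\phi = \sum_{i=1}^N c_i \psi_i$, as desired.

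The main subtlety I anticipate is justifying that $\chi_{\partial D_i}$ genuinely lies in the range of $\tilde{\mathcal{S}}_{\mathcal{D},\gamma}^0$, so that the defining equation for $\psi_i$ actually has a solution rather than just an abstract left-inverse statement. I would settle this by noting that $G_\gamma^0 - G^0$ is regular across the origin (both kernels share the $-1/(4\pi|x|)$ singularity), so $\tilde{\mathcal{S}}_{\mathcal{D},\gamma}^{0} - \mathcal{S}_{\mathcal{D}}^{0}$ is compact as an operator $L^2(\partial\mathcal{D}) \to H^1(\partial\mathcal{D})$; combined with the invertibility of $\mathcal{S}_{\mathcal{D}}^0$ recalled in the text and the injectivity of $\tilde{\mathcal{S}}_{\mathcal{D},\gamma}^0$, a Fredholm-alternative argument then upgrades injectivity to full bijectivity on these spaces.
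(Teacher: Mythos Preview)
Your proof is correct and follows the same overall scheme as the paper: show each $\psi_i$ lies in the kernel via the jump relation after identifying $\tilde{\mathcal{S}}_{\mathcal{D},\gamma}^0[\psi_i]$ as locally constant inside $\mathcal{D}$, then show any kernel element produces a single-layer potential that is locally constant in $\mathcal{D}$.

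The main difference lies in the reverse inclusion. You exploit the divergence form $\Delta u + \gamma\partial_{x_1}u = e^{-\gamma x_1}\nabla\cdot(e^{\gamma x_1}\nabla u)$ and obtain $\int_{\mathcal{D}} e^{\gamma x_1}|\nabla u|^2\,dx = 0$ directly from a single integration by parts against $\bar u$. The paper instead introduces the vector field $F(x) = e^{\gamma x_1}\nabla u(x)$, observes that it satisfies a curl--div system with vanishing normal trace, and invokes an external uniqueness result for such systems to conclude $\nabla u \equiv 0$. Your route is more self-contained and arguably more natural, since the same weighted energy identity handles both the Dirichlet and Neumann interior problems uniformly. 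You also explicitly raise and resolve the question of whether $\chi_{\partial D_i}$ lies in the range of $\tilde{\mathcal{S}}_{\mathcal{D},\gamma}^0$ (via a compact-perturbation Fredholm argument), a point the paper does not address despite having only established injectivity and a left inverse.
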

\begin{proof}
    Let $\psi_i$ be defined by (\ref{defpsi}). Introduce $ u = \tilde{\mathcal{S}}_{\mathcal{D},\gamma}^{0}[\psi_i]$. From (\ref{pde_skin}), the function $u$ satisfies the following equation:
    \begin{equation}
        \Delta u + \gamma \partial_1 u = 0 \quad \text{in } D_i.
        \label{eqindi}
    \end{equation}
    Moreover, $u=1$ on $\partial D_i$. Denote by $v(x) = u(x)- 1$ in $D_i$. Then, by integrating by parts over $D_i$, we have 
\begin{equation}
\label{eq:kerpsi}
\begin{split}
    0 & = \int_{D_i} \bigr(v\Delta v + \gamma v\partial_{x_1} v\bigr) \  \mathrm{d}x\\
    & = \int_{\partial D_i} v\frac{\partial v}{ \partial \nu} \ \mathrm{d}\sigma - \int_{D_i} \abs{\nabla v}^2 \ \mathrm{d}x+
    \frac{\gamma}{2} \int_{D_i} \partial_{x_1} v^2 \ \mathrm{d}x \\
    & =\int_{\partial D_i} v\frac{\partial v}{ \partial \nu} \ \mathrm{d}\sigma - \int_{D_i} \abs{\nabla v}^2 \ \mathrm{d}x +\frac{\gamma}{2} \int_{\partial D_i} \nu_1 v^2 \ \mathrm{d}\sigma, 
\end{split}
\end{equation}
where $\nu_1$ is the first component of the outgoing normal to $\partial D_i$. Since $v$ vanishes on $\partial D_i$, the first and third terms in \eqref{eq:kerpsi} are zero. We can conclude that $\nabla v=0$ and $v= 0$. It follows that $u = 1$ in $D_i$. Therefore, by the jump relation (\ref{jumpomega}), 
$$
 \left.\frac{\partial u}{\partial \nu}\right|_- = \left( -\frac{1}{2}I+\tilde{\mathcal{K}}_{\mathcal{D},\gamma}^{\gamma,*}\right)[\psi_i] =  0. 
$$
Conversely, assume that $\psi \in L^2(\partial \mathcal{D})$ satisfies
$$
\left( -\frac{1}{2}I+\tilde{\mathcal{K}}_{\mathcal{D},\gamma}^{\gamma,*}\right)[\psi] =  0.
$$
Then, $u= \tilde{\mathcal{S}}_{\mathcal{D},\gamma}^{0}[\psi]$ satisfies (\ref{eqindi}) in $\mathcal{D}$ together with the Neumann boundary condition $\left. \partial u/\partial \nu \right|_- =0$ on $\partial \mathcal{D}$. Let $F(x)= e^{\gamma x_1} \nabla u(x)$ for $x \in  \mathcal{D}$. Then $F$ satisfies the following curl-div system of equations in $ \mathcal{D}$ with a vanishing normal component on $\partial \mathcal{D}$: 
\begin{equation} \label{curldiv}
\begin{cases}
    \ds  \nabla \cdot F = 0 \ & \text{in } \mathcal{D}, \\
    \nm
    \ds  \nabla \times ( e^{-\gamma x_1} F) = 0 \ & \text{in } \mathcal{D}, \\
    \nm
    \ds F \cdot \nu  = 0 \ & \text{on } \partial \mathcal{D}.
    \end{cases}
\end{equation}
From the uniqueness of a solution to (\ref{curldiv}) (see, for instance, 
\cite[Sect. 2.2]{valli}), it follows that $u$ is constant inside each connected component $D_i$ of $\mathcal{D}$. Therefore, 
$$
   \psi_i = \sum_{i=1}^Nc_i (\tilde{\mathcal{S}}_{\mathcal{D},\gamma}^{0} )^{-1}[\chi_{\partial D_i}],
$$
for some constants $c_i$.
\end{proof}
The following result follows from Lemma \ref{lemmakernel}. 
\begin{lemma} \label{lemmakernel2}
    The kernel of the operator $-\frac{1}{2}I+\tilde{\mathcal{K}}_{\mathcal{D},\gamma}^{0}: L^2(\partial \mathcal{D}) \rightarrow L^2(\partial \mathcal{D})$ is spanned by 
    \begin{equation}
        \phi_i =  e^{\gamma x_1}\chi_{\partial D_i},\ i=1,\ldots,N.
    \end{equation}
\end{lemma}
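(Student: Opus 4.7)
The plan is to combine a Fredholm dimension count (using Lemma \ref{lemmakernel}) with an explicit verification that each $\phi_i$ lies in the relevant kernel via a weighted Green's identity. First, since $\tilde{\mathcal{K}}^{0,*}_{\mathcal{D},\gamma}$ has the same $|x-y|^{-1}$ singularity as the classical Neumann--Poincaré operator, it is compact on $L^2(\partial \mathcal{D})$, so $-\frac{1}{2}I + \tilde{\mathcal{K}}^{0,*}_{\mathcal{D},\gamma}$ is Fredholm of index $0$. By Lemma \ref{lemmakernel} its kernel is $N$-dimensional, hence so is its cokernel, which we identify with the kernel of the $L^2(\partial \mathcal{D})$-adjoint operator $-\frac{1}{2}I + \tilde{\mathcal{K}}^{0}_{\mathcal{D},\gamma}$. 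The functions $\phi_i = e^{\gamma x_1}\chi_{\partial D_i}$ are linearly independent because they are supported on disjoint components of $\partial \mathcal{D}$, so once we show each $\phi_i$ lies in this kernel they will form a basis.

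By the duality just described, it suffices to show that
\begin{equation*}
\int_{\partial \mathcal{D}} \phi_i \bigl(-\tfrac{1}{2}I + \tilde{\mathcal{K}}^{0,*}_{\mathcal{D},\gamma}\bigr)[\eta]\, d\sigma = 0 \quad \text{for every } \eta \in L^2(\partial \mathcal{D}).
\end{equation*}
Setting $u := \tilde{\mathcal{S}}^{0}_{\mathcal{D},\gamma}[\eta]$ and invoking the jump relation \eqref{jumpomega}, the integrand collapses to $e^{\gamma x_1}\,\partial_\nu u|_-$ on $\partial D_i$ and is zero on $\partial D_j$ for $j \neq i$. Thus the claim reduces to the single identity
\begin{equation*}
\int_{\partial D_i} e^{\gamma x_1}\,\partial_\nu u|_-\, d\sigma = 0 \qquad \text{whenever } (\Delta + \gamma \partial_{x_1})u = 0 \text{ in } D_i.
\end{equation*}

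To establish this I would pair $u$ against the test function $v(x) := e^{\gamma x_1}$, which is a direct-check solution of the formal adjoint equation $(\Delta - \gamma \partial_{x_1})v = 0$ and which satisfies $\partial_\nu v = \gamma \nu_1 v$ since $\nabla v = \gamma v\, \hat e_1$. Combining Green's second identity with the gauge integration $\gamma \int_{D_i} \partial_{x_1}(uv)\, dx = \gamma \int_{\partial D_i} \nu_1 uv\, d\sigma$ yields the weighted Green identity
\begin{equation*}
\int_{\partial D_i} \bigl( u \partial_\nu v - v \partial_\nu u - \gamma \nu_1 uv \bigr)\, d\sigma = 0,
\end{equation*}
valid for any such pair $(u,v)$. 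Substituting $\partial_\nu v = \gamma \nu_1 v$ makes $u \partial_\nu v$ and $\gamma \nu_1 uv$ cancel pointwise, leaving exactly $\int_{\partial D_i} v\,\partial_\nu u|_-\, d\sigma = 0$.

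The only real insight needed is the choice of the test function $v = e^{\gamma x_1}$: it is picked as the exponential solution of the adjoint PDE for which $\partial_\nu v$ is aligned with $\gamma \nu_1 v$, forcing the gauge boundary correction to vanish; once one spots this, the rest is the dimension bookkeeping from Lemma \ref{lemmakernel} and a one-line pointwise cancellation, so no step is genuinely hard.
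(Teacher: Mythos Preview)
Your proof is correct and follows essentially the same strategy as the paper: a Fredholm index-zero argument (via compactness of the Neumann--Poincar\'e operator) combined with Lemma~\ref{lemmakernel} to pin the kernel dimension at $N$, together with the duality computation showing $\int_{\partial D_i} e^{\gamma x_1}\,\partial_\nu u|_-\,d\sigma = 0$. The only cosmetic difference is that the paper obtains this last identity in one line by applying the divergence theorem to the vector field $e^{\gamma x_1}\nabla u$ (i.e.\ Green's first identity), whereas you route through Green's second identity with the adjoint test function $v=e^{\gamma x_1}$ and then observe the pointwise cancellation $u\,\partial_\nu v = \gamma\nu_1 uv$; both unwind to the same computation.
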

\begin{proof}
For any $\psi\in L^2(\partial \mathcal{D})$,   by using the jump conditions (\ref{jumpomega}), we have
    \begin{equation}
        \begin{split}
            \left< \psi, \bigr(-\frac{1}{2} I+\tilde{\mathcal{K}}_{\mathcal{D},\gamma}^0\bigr) [\phi_i] \right> & = \int_{\partial D_i}e^{\gamma x_1} \left.\frac{\partial}{\partial \nu}\right|_-\tilde{\mathcal{S}}_{\mathcal{D},\gamma}^0[\psi]\ \mathrm{d}\sigma \\
            & = \int_{D_i} \bigg( e^{\gamma x_1} \Delta \tilde{\mathcal{S}}_{\mathcal{D},\gamma}^0[\psi] + \nabla(e^{\gamma x_1}) \cdot \nabla \tilde{\mathcal{S}}_{\mathcal{D},\gamma}^0[\psi]
             \bigg)\  \mathrm{d}x \\
            & = \int_{D_i} \bigg(e^{{\gamma x_1}}\Delta \tilde{\mathcal{S}}_{\mathcal{D},\gamma}^0[\psi]+\gamma e^{\gamma x_1}\partial_{x_1}\tilde{\mathcal{S}}_{\mathcal{D},\gamma}^0[\psi] \bigg)\ \mathrm{d}x \\
            & = 0,
        \end{split}
    \end{equation}
    where $\left< \; , \; \right>$ denotes the $L^2$ scalar product on $\partial D_i$. 
    
    On the other hand, since for $\gamma =0$, $-\frac{1}{2} I+ \mathcal{K}_{\mathcal{D}}^0: L^2(\partial \mathcal{D}) \rightarrow L^2(\partial \mathcal{D})$ is Fredholm of index zero and $\tilde{\mathcal{K}}_{\mathcal{D},\gamma}^0 - \mathcal{K}_{\mathcal{D}}^0: L^2(\partial \mathcal{D}) \rightarrow L^2(\partial \mathcal{D})$ is compact (as $G^0_\gamma$ and $G^0$ have the same singularity at the origin),  the operator $-\frac{1}{2} I+ \tilde{\mathcal{K}}_{\mathcal{D},\gamma}^0: L^2(\partial \mathcal{D}) \rightarrow L^2(\partial \mathcal{D})$ is also Fredholm of index zero. From Lemma \ref{lemmakernel}, it then follows that the dimension of the kernel of $-\frac{1}{2} I+ \tilde{\mathcal{K}}_{\mathcal{D},\gamma}^0$  is $N$. This completes the proof. 
\end{proof}


We now introduce the gauge capacitance matrix,  which allows us to reduce the problem of finding the  subwavelength eigenfrequencies and eigenmodes to a finite-dimensional eigenvalue problem. 
\begin{definition}[Gauge capacitance matrix] 
The gauge capacitance matrix  $\mathcal{C}_N^\gamma=\left((\mathcal{C}_N^\gamma)_{i,j}\right), i,j=1,\ldots, N,$ reads
\begin{equation} \label{capacitancedef}
    \left(\mathcal{C}_N^\gamma\right)_{i,j} = -\frac{\delta}{\int_{D_i}e^{\gamma x_1}\ \mathrm{d}x} \int_{\partial D_i} e^{\gamma x_1}(\mathcal{S}^0_{\mathcal{D}})^{-1}[\chi_{\partial D_j}](x) \, \mathrm{d}x.
\end{equation}
\end{definition}

The following results hold. 
\begin{thm}[Discrete approximations] \label{thm:approx}
\begin{enumerate}
\item[(i)] The $N$ subwavelength eigenfrequencies $\omega_i$ of (\ref{helmholtz}) satisfy, as $\delta \rightarrow 0$,
\begin{equation} \label{approxlambda}
\omega_n= \sqrt{\lambda_n} + O(\delta), 
\end{equation}
where $(\lambda_n)_{1\leq n\leq N}$ are the eigenvalues of $\mathcal{C}_N^\gamma$;
 \item[(ii)]   Let $v_n$ be the eigenvector of $\mathcal{C}_N^\gamma$ associated to $\lambda_n$. Then the normalised resonant mode $u_n$ associated to the resonant frequency $\omega_n$ is given by
\begin{equation} \label{eigenmodeapprox}
    u_n(x) = 
    \begin{cases}
        v_n \cdot \mathbf{S}^{\omega_n} (x) + O(\delta) \ \ &\text{in}\ \mathbb{R}^3\setminus \bar{\mathcal{D}}, \\
         v_n \cdot \tilde{\mathbf{S}}^{\omega_n}(x) + O(\delta) \ \ & \text{in}\ D_i,
    \end{cases}
\end{equation}
where 
\begin{equation}
\label{eigenvector:skin}
    \tilde{\mathbf{S}}^{\omega_n}(x) = \begin{pmatrix}
        \tilde{\mathcal{S}}_{\mathcal{D},\gamma}^{\omega_n} [\psi_1](x)\\ \ldots \\  \tilde{\mathcal{S}}_{\mathcal{D},\gamma}^{\omega_n} [\psi_N](x)
    \end{pmatrix},
\end{equation}
with $\psi_i = (\tilde{\mathcal{S}}_{\mathcal{D},\gamma}^0)^{-1}[\chi_{\partial D_i}]$ for $i=1,\ldots,N$.
\end{enumerate}
\label{thmgauge}
\end{thm}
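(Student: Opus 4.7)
The plan is to carry out an asymptotic analysis of the layer-potential reformulation $\mathcal{A}(\omega,\delta)(\psi,\phi)^\top=0$ under the natural high-contrast scaling $\omega^2 = O(\delta)$, and to identify the resulting leading-order eigenvalue problem with that of $\mathcal{C}_N^\gamma$.

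First I would analyse the unperturbed operator $\mathcal{A}(0,0)$ in \eqref{a00}. Its block-triangular structure combined with Lemma~\ref{lemmakernel} shows that its kernel is $N$-dimensional and parametrised by $(c_1,\dots,c_N)\in\mathbb{C}^N$ via $(\psi,\phi)=\sum_{i=1}^{N} c_i(\psi_i,\phi_i^{(0)})$, where $\psi_i$ is given by~\eqref{defpsi} and the upper row of $\mathcal{A}(0,0)$ forces $\phi_i^{(0)}=(\mathcal{S}_\mathcal{D}^0)^{-1}[\chi_{\partial D_i}]$. This fixes the leading-order ansatz for the densities.

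Next I would derive the reduced eigenvalue problem directly from the transmission problem \eqref{helmholtz}, which bypasses a separate expansion of every layer-potential operator. Multiplying the interior equation by $e^{\gamma x_1}$, integrating over $D_i$, using the identity $\nabla\cdot(e^{\gamma x_1}\nabla u)=e^{\gamma x_1}(\Delta u+\gamma\partial_{x_1}u)$ together with the divergence theorem and the transmission condition, one obtains
\begin{equation*}
    \omega^2\int_{D_i} e^{\gamma x_1}\, u\,\mathrm{d}x \;=\; -\delta\int_{\partial D_i} e^{\gamma x_1}\,\frac{\partial u}{\partial\nu}\bigg|_+\mathrm{d}\sigma.
\end{equation*}
To leading order $u|_{D_i}\approx c_i$ (by Lemma~\ref{lemmakernel}, since $\tilde{\mathcal{S}}_{\mathcal{D},\gamma}^{0}[\psi_i]$ extends as the constant $1$ inside $D_i$), while in the exterior $u=\mathcal{S}_\mathcal{D}^0[\phi^{(0)}]$ has normal derivative $\partial_\nu\mathcal{S}_\mathcal{D}^0[\phi_j^{(0)}]|_+=(\mathcal{S}_\mathcal{D}^0)^{-1}[\chi_{\partial D_j}]$ (the interior harmonic extension of $\chi_{\partial D_j}$ being $\chi_{D_j}$, whose interior normal derivative vanishes). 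Substituting reproduces exactly $\omega^2 c = \mathcal{C}_N^\gamma c$ upon comparison with~\eqref{capacitancedef}, giving the leading-order content of~\eqref{approxlambda}.

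For the rigorous $O(\delta)$ bound I would invoke generalised Rouché/Gohberg--Sigal theory for the operator-valued analytic function $\omega\mapsto\mathcal{A}(\omega,\delta)$. Since $G^\omega$ and $G_\gamma^\omega$ are analytic in $\omega^2$ in three dimensions, $\mathcal{A}(0,0)$ is Fredholm of index zero with $N$-dimensional kernel, and the perturbation $\mathcal{A}(\omega,\delta)-\mathcal{A}(0,0)$ is small in operator norm, a Rouché-type argument shows that $\mathcal{A}(\omega,\delta)$ has exactly $N$ characteristic values near $0$, each $O(\sqrt{\delta})$-close to $\sqrt{\lambda_n}$; this yields~\eqref{approxlambda}. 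Part~(ii) then follows by substituting the leading-order densities $\psi\approx\sum_n c_n\psi_n$ and $\phi\approx\sum_n c_n\phi_n^{(0)}$ into the representation~\eqref{eq:ux_layerpotential}. The hardest part will be to justify that, when expanding $\tilde{\mathcal{K}}_{\mathcal{D},\gamma}^{\omega,*}$ near $\omega=0$, no $O(\omega)=O(\sqrt{\delta})$ term contaminates the reduced equation: this relies crucially on the three-dimensional $\omega^2$-analyticity of the fundamental solutions and must be tracked uniformly across the $N$ modes.
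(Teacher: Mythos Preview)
Your approach is correct and closely parallel to the paper's, though organised differently. Both routes rest on the same two ingredients: the identification of $\ker\mathcal{A}(0,0)$ via Lemma~\ref{lemmakernel}, and the weighted integration-by-parts identity $\int_{D_i}e^{\gamma x_1}(\Delta u+\gamma\partial_{x_1}u)=\int_{\partial D_i}e^{\gamma x_1}\partial_\nu u|_-\,\mathrm{d}\sigma$. The paper packages this identity inside a pole--pencil decomposition of $\bigl(-\tfrac12 I+\tilde{\mathcal{K}}_{\mathcal{D},\gamma}^{\omega,*}\bigr)^{-1}$: it pairs the right kernel functions $\psi_i$ with the left kernel functions $\phi_j=C_je^{\gamma x_1}\chi_{\partial D_j}$ (Lemma~\ref{lemmakernel2}), computes $\langle\phi_j,(-\tfrac12+\tilde{\mathcal{K}}_{\mathcal{D},\gamma}^{\omega,*})[\psi_i]\rangle=-\delta_{ij}\omega^2 C_j\int_{D_j}e^{\gamma x_1}$, and then reads off $\mathcal{C}_N^\gamma$ as the finite-rank reduction $-K\mathcal{L}_0$ restricted to the kernel. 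You instead apply the same identity directly to the transmission problem and substitute the leading-order densities, which is more transparent for identifying the reduced eigenvalue problem but less systematic for the error control. Your jump computation $\partial_\nu\mathcal{S}_\mathcal{D}^0[\phi_j^{(0)}]|_+=\phi_j^{(0)}$ via the vanishing interior normal derivative is correct and is exactly the mechanism the paper exploits implicitly.

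On your final worry about $O(\omega)$ contamination: the paper handles this by writing the perturbation as $\mathcal{L}=\mathcal{L}_0+\hat{L}$ with $\|\hat{L}\|=O(\omega\delta)$ uniformly, so the extra terms are $O(\delta^{3/2})$ and do not enter at leading order; your claim that $G_\gamma^\omega$ is analytic in $\omega^2$ is right, but $G^\omega$ does carry a linear-in-$\omega$ term (the constant $-\mathrm{i}\omega/4\pi$), and it is this $\delta$-prefactor in the transmission condition, not pure $\omega^2$-analyticity, that pushes its contribution to higher order.
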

\begin{proof} 
We make  use of the functional approach first introduced in \cite{haicmp} (see also \cite{ammari.davies.ea2021Functional}). 
In view of (\ref{a00}), by using Lemma \ref{lemmakernel},
we have the kernel basis functions of $\mathcal{A}(\omega,\delta)$ at $\delta = \omega = 0$:
\begin{equation}
\mathrm{ker}(\mathcal{A}(0,0))=\mathrm{span}\{\Psi_i\}_{i=1,\ldots,N},\ \Psi_i=\begin{pmatrix}
(\tilde{\mathcal{S}}_{\mathcal{D},\gamma}^{0} )^{-1}[\chi_{\partial D_i}] \\
(\mathcal{S}_{\mathcal D}^0)^{-1}[\chi_{\partial D_i}]
\end{pmatrix}.
\end{equation}
For $\omega$ in a small neighbourhood $\mathcal{V}$ of $0$, we have the following pole-pencil decomposition:
\begin{equation}
    \left(-\frac{1}{2} I +\tilde{\mathcal{K}}_{\mathcal{D},\gamma}^{\omega,*}\right)^{-1} = \sum_{j=1}^N \frac{\left<\phi_j,\cdot\right>\psi_j}{\omega^2} + \mathcal{R}(\omega), 
\end{equation}
where the right- and left-kernel functions $\psi_i$ and $\phi_i$ are normalised such that 
\begin{equation}
\begin{split}
        \frac{1}{\omega^2}\left<\phi_j,\left(-\frac{1}{2}+\tilde{\mathcal{K}}_{\mathcal{D},\gamma}^{\omega,*}\right)[\psi_i] \right> = \delta_{ij},
\end{split}
\end{equation}
and the operator-valued function $\omega \mapsto \mathcal{R}(\omega)$ is holomorphic on $\mathcal{V}$. Here, we can take $(\phi_j)_{j=1}^N$ 
\begin{equation}
     \phi_j=C_j
e^{\gamma x_1}\chi_{\partial D_j},
\end{equation}
for normalisation factors $C_j$. We have 
\begin{equation}
\begin{split}
        \left<\phi_j,\left(-\frac{1}{2}+\tilde{\mathcal{K}}_{\mathcal{D},\gamma}^{\omega,*}\right)[\psi_i] \right> & = \int_{\partial D_j} C_j e^{\gamma x_1} \left.\frac{\partial}{\partial \nu} \right|_-\tilde{\mathcal{S}}_{\mathcal{D},\gamma}^\omega [\psi_i] \ \mathrm{d}\sigma \\
        & = C_j \int_{D_j} \bigg(e^{{\gamma x_1}}\Delta \tilde{\mathcal{S}}_{\mathcal{D},\gamma}^\omega [\psi_i]+\gamma e^{\gamma x_1}\partial_{x_1}\tilde{\mathcal{S}}_{\mathcal{D},\gamma}^\omega[\psi_i] \bigg)\ \mathrm{d}x \\
        & = -\delta_{ij} {\omega^2} C_j \int_{D_j} e^{\gamma x_1} \ \mathrm{d}x. 
\end{split}
\end{equation}
It then follows that 
\begin{equation}
\ds    \phi_j = -\frac{1}{\int_{D_j}e^{\gamma x_1}\ \mathrm{d}x}e^{\gamma x_1}\chi_{\partial D_j}. 
\end{equation}
Hence we have the kernel functions
\begin{equation}
    \mathrm{ker}(\mathcal{A}^*(0,0))=\mathrm{span}\{\Phi_i\}_{i=1,\ldots,N},\quad \Phi_i=-\frac{1}{\int_{D_i}e^{\gamma x_1}\ \mathrm{d}x}\begin{pmatrix}
0 \\
e^{\gamma x_1}\chi_{\partial D_i}
\end{pmatrix}.   
\end{equation}
For small $\delta$, we can write
\begin{equation}
    \mathcal{A}(\omega,\delta) = \mathcal{A}(\omega,0)+\mathcal{L}(\omega,\delta).
\end{equation}
We have 
\begin{equation}
0 = \mathcal{A}(\omega,0)^{-1}\mathcal{A}(\omega,\delta)[\Phi] = \mathcal{A}(\omega,0)^{-1}(\mathcal{A}(\omega,0)+\mathcal{L})[\Phi].
\end{equation}
This leads to
\begin{equation}
    \left(I+\frac{K\mathcal{L}}{\omega^2}+\mathcal{R}\mathcal{L}\right)[\Phi] = 0,
\end{equation}
or equivalently, 
\begin{equation}
    (\omega^2I +K\mathcal{L}+\omega^2\mathcal{R}\mathcal{L})[\Phi] = 0,
\end{equation}
where $\ds K= \sum_{i=1}^N \left< \phi_i, \cdot \right> \psi_i$. 
Since $\mathcal{L} = \mathcal{L}_0+\hat{L}$ with $\mathcal{L}_0$ being independent of $\omega$ and $\hat{L}$ satisfying (in the operator norm) $\|\hat{L}\| = O(\omega \delta)$ uniformly in $\omega \in \mathcal{V}$, the subwavelength resonances are approximately  the square roots of the eigenvalues of $-K\mathcal{L}_0$ restricted to $ \mathrm{ker} (\mathcal{A}(0,0))$. This is given by the gauge capacitance matrix $\mathcal{C}_N^\gamma$. Finally, (\ref{eigenvector:skin}) follows from the representation formula (\ref{eq:ux_layerpotential}).
\end{proof}

\begin{rem}
\Cref{thm:approx} can easily be generalised to a setting with general material parameters. Assume that the resonator $D_i$ has wave  speed  $v_i$ and contrast parameter $\delta_i$. Correspondingly, let $v$ be the wave speed in the surrounding medium. Assume that $\delta_i = O(\delta)$, for $i=1,\ldots, N,$ and $0< \delta \ll 1$. For such a system of resonators, the resonance problem reads 
\begin{equation}
    \label{helmholtzg}
    \begin{cases}
    \ds \Delta u + \frac{\omega^2}{v^2}u = 0 \ & \text{in } \R^3 \setminus \overline{\mathcal{D}}, \\
    \nm
    \ds \Delta u + \frac{\omega^2}{v_i^2} u +\gamma \partial_1 u = 0 \ & \text{in } {D_i}, \quad i=1,\ldots, N, \\
    \nm
    \ds  u|_{+} -u|_{-}  = 0  & \text{on } \p \mathcal{D}, \\
    \nm
    \ds \left.\delta_i \frac{\p u}{\p \nu} \right|_{+} - \left.\frac{\p u}{\p \nu} \right|_{-} = 0 & \text{on } \p {D_i}, \quad i=1,\ldots, N,  \\
    \nm
    \ds   u \text{ satisfies an outgoing radiation condition.} &   
    \end{cases}
\end{equation}
It can be easily seen that the gauge capacitance matrix  $\mathcal{C}_N^\gamma=\left((\mathcal{C}_N^\gamma)_{i, j}\right), i,j=1,\ldots, N,$ defined by 
\begin{equation} \label{newC}
\left(\mathcal{C}_N^\gamma\right)_{i, j} = -\frac{\delta_i v_i^2}{\int_{D_i}e^{\gamma x_1}\ \mathrm{d}x} \int_{\partial D_i} e^{\gamma x_1}(\mathcal{S}^0_{\mathcal{D}})^{-1}[\chi_{\partial D_j}] \mathrm{d}x,
\end{equation}
provides as in Theorem \ref{thmgauge} leading-order approximations
to the  $N$ subwavelength resonant frequencies  of (\ref{helmholtzg}) and their associated resonant eigenmodes. In fact, the $N$ subwavelength resonant frequencies of (\ref{helmholtzg}) and their associated eigenmodes are respectively approximated by (\ref{approxlambda}) and (\ref{eigenmodeapprox}), where $(\lambda_n)_{1\leq n\leq N}$ and $(v_n)_{1\leq n\leq N}$ are respectively the eigenvalues and associated eigenvectors of  $\mathcal{C}_N^\gamma$ defined by (\ref{newC}). 
\end{rem}

\section{Toeplitz structure of the gauge capacitance matrix} \label{sect5}

In this section, in order to demonstrate the non-Hermitian skin effect in three-dimensional systems of subwavelength resonators,  we derive some  properties of the gauge capacitance matrix $\mathcal C_{N}^{\gamma}$. In particular, we  show that it is, up to a small perturbation, a Toeplitz matrix. In order to do that, we need to establish some convergence results, characterising the matrix in the limit when the number of resonators goes to infinity. This is a generalisation of the approach used in \cite{ammari2023spectral}. In particular, we look at periodic chain structures with only one single resonator in the unit cell. This structure is equivalent to equidistant chains of resonators. Formally, we let $l_1 \in \mathbb{R}^3$ denote the lattice vector generating the linear lattice 
\begin{equation}
    \Lambda : = \{ml_1 | m\in\mathbb{Z}\}.
\end{equation}
Without loss of generality, assume that $l_1$ is aligned with the $x_1$-axis. Denote by $Y\subset \mathbb{R}^3$ the fundamental domain of the lattice. The dual lattice of $\Lambda$, denoted by $\Lambda^*$, is generated by $\alpha_1$, satisfying $\alpha_1\cdot l_1 = 2\pi$. The Brillouin zone $Y^*$ is defined as
\begin{equation}
    Y^* :  = (\mathbb{R} \times \{0_{\mathbb{R}^2}\})/ \Lambda^*.
\end{equation}
As before, we let $D$ denote the periodically repeated resonator $D$ consider an infinite structure of resonators
\begin{equation}
    \bigcup_{m \in \Lambda} \big( D +m \big). 
\end{equation}
For any function $f\in L^2(\mathbb{R}^3)$, the Floquet transform of $f$ is defined as 
\begin{equation}
    \mathcal{F}[f](x, \alpha):=\sum_{m \in \Lambda} f(x-m) e^{\mathrm{i} \alpha \cdot m}, \quad x, \alpha \in \mathbb{R}^3.
\end{equation}
Extending the set of equations (\ref{helmholtz}) to the infinite structure and applying the Floquet transform with $u^\alpha(x):=\mathcal{F}[u](x,\alpha)$, we obtain
\begin{equation}
\label{eq:periodicskin}
    \begin{cases}\Delta u^\alpha+\omega ^2 u^\alpha=0 & \text { in } Y \backslash D, \\ \Delta u^\alpha+\omega^2 u^\alpha +\gamma \partial_1 u^\alpha =0 & \text { in } D,\\ \left.u^\alpha\right|_{+}-\left.u^\alpha\right|_{-}=0 & \text { on } \partial D, \\ \left.\ds \delta \frac{\partial u^\alpha}{\partial \nu}\right|_{+}-\left. \ds\frac{\partial u^\alpha}{\partial \nu}\right|_{-}=0 & \text { on } \partial D, \\ u^\alpha\left(x_1, x_2, x_3\right) & \text { is } \alpha \text {-quasiperiodic in } x_1, \\ u^\alpha\left(x_1, x_2,x_3\right) & \text{ satisfies the $\alpha$-quasiperiodic outgoing radiation} \\ &\hspace{0.5cm} \text{ condition as } \sqrt{x_2^2+x_3^2} \rightarrow \infty.\end{cases}
\end{equation}
We refer to \cite{ammari2018mathematical} for a precise statement of the $\alpha$-quasiperiodic radiation condition. 
We can proceed as in the usual case and define the quasiperiodic function outside the resonators as
\begin{equation}\label{eq:Galph}
    G^{\alpha, k}(x, y):=-\sum_{m \in \Lambda} \frac{e^{\mathrm{i} k|x-y-m|}}{4 \pi|x-y-m|} e^{\mathrm{i} \alpha \cdot m},
\end{equation}
together with the quasiperiodic single layer potential
\begin{equation}
    \mathcal{S}_D^{\alpha, k }[\phi](x):=\int_{\partial D} G^{\alpha, k}(x, y) \phi(y) \mathrm{d} \sigma(y), \quad x \in \mathbb{R}^3,
\end{equation}
for $\phi \in L^2(\partial D)$. 
The series in \eqref{eq:Galph} converges uniformly for $x$ and $y$ in compact sets of $\R^3$, $x\neq y$,  and $k \neq |\alpha + q|$ for all $q\in \Lambda^*$.
Repeating the procedures in Section \ref{sect3}, we can generalise the concept of quasiperiodic capacity first introduced in \cite{bandalpha} to (\ref{eq:periodicskin})  and define the gauge capacitance matrix as follows.
\begin{definition}
    If $\alpha \neq 0$, the gauge quasiperiodic capacity is  given by 
    \begin{equation}
    \hat{\mathcal{C}}^{\alpha,\gamma} = -\frac{\delta}{\int_{D}e^{\gamma x_1}\ \mathrm{d}x}\int_{\partial D} e^{\gamma x_1}(\mathcal{S}^{\alpha,0}_D)^{-1}[\chi_{\partial D}](x)\,  \mathrm{d}\sigma(x).
\end{equation}   
\end{definition}
The above introduced gauge capacity is a ``dual-space'' capacitance representation. The infinitely periodic system with only one resonator in a unit cell is equivalent to an infinite chain of equidistant resonators which has a ``real-space'' capacitance representation through the inverse Floquet-Bloch transformation (for details, see \cite{finiteinfinite, ammari2023spectral}) given by
\begin{equation}
\label{defrealspace}
   \mathcal{C}^\gamma_{i,j} = \frac{1}{\babs{Y^*}} \int_{Y^*} \hat{\mathcal{C}}^{\alpha,\gamma} e^{-\mathrm{i}\alpha (i-j)} \mathrm{d}\alpha.
\end{equation}
It is clear from the definition above that $\mathcal{C}^\gamma$ is a Toeplitz matrix. 
We now circle back to the finite system. Let $\mathcal{D}_N$ denote a finite chain of $N$ equidistant resonators and $\mathcal{C}_N^\gamma$ be the corresponding gauge capacitance matrix. We can extend $\mathcal{D}_N$ to a larger chain $\mathcal{D}_R$ with $R$ resonators and denote by $\mathcal{C}_{R}^\gamma$ the corresponding gauge capacitance matrix. We then let $\tilde{\mathcal{C}}_{N}^\gamma$ be the  embedded $N\times N$-size centre block of $\mathcal{C}_{R}^\gamma$. We call $\tilde{\mathcal{C}}_{N}^\gamma$ the truncated gauge capacitance matrix with size $N\times N$. Following the same steps as those in \cite{finiteinfinite}, we have the following theorem.
\begin{thm}
\label{thm:finiteinfinite}
    For $R\rightarrow \infty$, we have for every $i,j=1,\ldots,N,$ that
    \begin{equation}
        \lim_{R\to\infty} (\tilde{\mathcal{C}}_N^\gamma)_{i,j} = \mathcal{C}^{\gamma}_{i,j}.
    \end{equation}
\end{thm}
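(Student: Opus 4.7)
The plan is to follow the strategy developed in \cite{finiteinfinite} for the standard capacitance matrix, observing that in the present setting the gauge potential enters only through the exponential weights $e^{\gamma x_1}$ in the definition (\ref{capacitancedef}) and not through the underlying integral operator $\mathcal{S}^0_{\mathcal{D}}$ itself. Consequently, the convergence statement can be reduced to a purely operator-theoretic question about the central block of the inverse single layer operator $(\mathcal{S}^0_{\mathcal{D}_R})^{-1}$, to which the arguments of \cite{finiteinfinite} apply with only cosmetic modifications.

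First, I would rewrite the infinite-chain densities $\psi_\infty^j := (\mathcal{S}^0_{\mathcal{D}_\infty})^{-1}[\chi_{\partial D_j}]$ in terms of the quasiperiodic densities $\psi^\alpha := (\mathcal{S}^{\alpha,0}_D)^{-1}[\chi_{\partial D}]$ via the inverse Floquet--Bloch transform. Substituting the resulting expression into the weighted functional $\phi \mapsto -\tfrac{\delta}{\int_{D_i}e^{\gamma x_1}\mathrm{d}x}\int_{\partial D_i} e^{\gamma x_1}\phi\,\mathrm{d}\sigma$ and exchanging the orders of integration reproduces, after a change of variable by the lattice shift, exactly the dual-space formula (\ref{defrealspace}) for $\mathcal{C}^\gamma_{i,j}$ in terms of $\hat{\mathcal{C}}^{\alpha,\gamma}$. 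This identifies the correct candidate limit and fixes a normalisation.

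The core analytical step is then to prove that the finite densities $\psi^j_R := (\mathcal{S}^0_{\mathcal{D}_R})^{-1}[\chi_{\partial D_j}]$ converge, in the $L^2$-norm on each fixed boundary $\partial D_i$ near the centre of the chain, to $\psi^j_\infty$ as $R\to \infty$. This is the content of the corresponding argument in \cite{finiteinfinite}, which combines (i) the $|x-y|^{-1}$ decay of the free-space Green's function $G^0$ to control the cross-interactions between distant resonators, and (ii) a uniform-in-$R$ invertibility bound for $\mathcal{S}^0_{\mathcal{D}_R}$ as a map between the natural $L^2$ and $H^1$ spaces on $\partial \mathcal{D}_R$. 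Together, these imply that the contributions of the tail resonators to $\psi^j_R|_{\partial D_i}$ vanish in the limit, and that the central block of $(\mathcal{S}^0_{\mathcal{D}_R})^{-1}$ converges entry-wise to the corresponding block of the infinite-chain inverse.

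Once this convergence of densities is in hand, the result follows immediately, since the weighted integral $\phi \mapsto \int_{\partial D_i} e^{\gamma x_1}\phi\,\mathrm{d}\sigma$ is a bounded linear functional on $L^2(\partial D_i)$ and therefore commutes with the limit $R \to \infty$, yielding $(\tilde{\mathcal{C}}_N^\gamma)_{i,j} \to \mathcal{C}^\gamma_{i,j}$. The main obstacle I expect is the uniform-in-$R$ invertibility in step two: one must rule out the emergence of spurious near-kernel modes as the chain grows, which in \cite{finiteinfinite} is handled by Neumann-series type estimates exploiting the off-diagonal decay of $G^0$ together with the positivity of the leading diagonal of the associated integral kernel. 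Everything else amounts to bookkeeping through the Floquet--Bloch formalism and continuity of bounded linear functionals.
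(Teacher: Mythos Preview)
Your proposal is correct and follows essentially the same route as the paper: both reduce the question to the operator-theoretic convergence result of \cite{finiteinfinite} for the central block of $(\mathcal{S}^0_{\mathcal{D}_R})^{-1}$, after observing that the gauge potential enters only through the bounded weight $e^{\gamma x_1}$ in (\ref{capacitancedef}) and not through the underlying single layer operator. The paper organises the computation slightly differently---it approximates $(\mathcal{S}_D^{\alpha,0})^{-1}[\chi_{\partial D}]$ directly by $\mathcal{S}_{\mathcal{D}_R}^{-1}$ applied to the quasiperiodic sum $\chi^\alpha$ and then integrates over $Y^*$---but the analytic content is the same as your density-convergence argument.
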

\begin{proof}
As shown in \cite{finiteinfinite}, for given $\phi \in L^2(D)$, we have
\begin{equation}
    \mathcal{S}_D^\alpha [\phi] = \mathcal{S}_{\mathcal{D}_R}[\phi] + \mathcal{R}^\alpha[\phi],
\end{equation}
where $\mathcal{R}^\alpha = o(1)$ as $R\rightarrow \infty$. From the Neumann series, we have
\begin{equation}
    (\mathcal{S}_D^\alpha)^{-1}[\chi_{\partial D}] = \mathcal{S}_{\mathcal{D}_R}^{-1}[\chi^\alpha]+ o(1),
\end{equation}
    with
    \begin{equation}
        \chi^\alpha=\sum_{\abs{m}< Nl_1} \chi_{\partial D_i} e^{\mathrm{i} \alpha \cdot m}.
    \end{equation}
    Finally, we have as $N \rightarrow \infty$ that
    \begin{align}
        \mathcal{C}_{ij}^\gamma & = - \frac{1}{\babs{Y^*}} \int_{Y^*} \frac{\delta}{\int_{D}e^{\gamma x_1}\ \mathrm{d}x}\int_{\partial D} e^{\gamma x_1}(\mathcal{S}^{\alpha,0}_D)^{-1}[\chi^\alpha ]  e^{-\mathrm{i}\alpha (i-j)} \mathrm{d} \sigma \mathrm{d}\alpha + o(1)\nonumber  \\
        & = \frac{\delta}{\int_{D_i}e^{\gamma x_1} \mathrm{d} x} \int_{\partial D_i}e^{\gamma x_1} \mathcal{S}_{\mathcal{D}_R}^{-1}[\chi_{\partial D_j}] \mathrm{d}\sigma + o(1)   \\
        & = (\tilde{\mathcal{C}}_N^\gamma)_{ij} +o(1).\nonumber
    \qedhere
    \end{align}
\end{proof}

The next results show that the matrix $\mathcal{C}_N^\gamma$ has a Toeplitz structure up to some small perturbation.
\begin{lemma}
\label{lemma:estimate}
We have the following estimate of the matrix entries:
\begin{equation}
    \babs{(\mathcal{C}_N^\gamma)_{i,j}-(\tilde{\mathcal{C}}_{N}^\gamma)_{i,j}} \leq \frac{\delta K}{(1+\min(i,N-i))(1+\min(j,N-j))},
\end{equation}
for some constant $K$ independent of $N$.
\end{lemma}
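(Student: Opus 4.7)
The plan is to trace the difference between the two capacitance matrices back to a difference between the inverses of two single-layer operators, and then exploit the $1/|x-y|$ decay of $G^0$ together with a Neumann-series expansion to get the claimed factored boundary decay.

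First, I would use the definition \eqref{capacitancedef} to write
\begin{equation*}
(\mathcal{C}_N^\gamma)_{i,j} - (\tilde{\mathcal{C}}_N^\gamma)_{i,j}
= -\frac{\delta}{\int_{D_i} e^{\gamma x_1}\,\dx x} \int_{\partial D_i} e^{\gamma x_1}\Big( (\mathcal{S}_{\mathcal{D}_N}^0)^{-1} - (\mathcal{S}_{\mathcal{D}_R}^0)^{-1}\Big)[\chi_{\partial D_j}](x)\,\dx\sigma(x),
\end{equation*}
where $(\tilde{\mathcal{C}}_N^\gamma)_{i,j}$ is understood via the embedding of $\partial D_i,\partial D_j$ into the larger chain $\mathcal{D}_R$. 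Writing $\mathcal{S}_{\mathcal{D}_R}^0 = \mathcal{S}_{\mathcal{D}_N}^0 + \mathcal{T}$, where $\mathcal{T}$ collects the contributions from the ``extra'' resonators $\mathcal{D}_R \setminus \mathcal{D}_N$, the operator identity $A^{-1} - B^{-1} = A^{-1}(B-A)B^{-1}$ gives
\begin{equation*}
\Big((\mathcal{S}_{\mathcal{D}_N}^0)^{-1} - (\mathcal{S}_{\mathcal{D}_R}^0)^{-1}\Big)[\chi_{\partial D_j}]
= (\mathcal{S}_{\mathcal{D}_N}^0)^{-1} \mathcal{T} (\mathcal{S}_{\mathcal{D}_R}^0)^{-1}[\chi_{\partial D_j}].
\end{equation*}

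The next step is to estimate the two factors that produce the spatial decay. The density $(\mathcal{S}_{\mathcal{D}_R}^0)^{-1}[\chi_{\partial D_j}]$ is concentrated on $\partial D_j$ and, by the $1/|x|$ decay of $G^0$ and the uniform boundedness of the capacities, its image under $\mathcal{T}$ has $L^2$-norm on any $\partial D_\ell$ with $\ell \in \{-\infty,\dots,0\} \cup \{N+1,\dots,\infty\}$ that decays like $1/\mathrm{dist}(D_\ell, D_j)$. Since the ``extra'' resonators lie outside $\{1,\dots,N\}$, the distance from the resonator $D_j$ to the nearest extra resonator is at least of order $\min(j,N-j)$, giving a first factor of order $1/(1+\min(j,N-j))$. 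After applying $(\mathcal{S}_{\mathcal{D}_N}^0)^{-1}$ (bounded) and then multiplying by $G^0$-type kernels to evaluate on $\partial D_i$, the same $1/|x|$ decay between $D_i$ and those far-away extra resonators produces a second factor of order $1/(1+\min(i,N-i))$. Multiplying the two factors and absorbing the uniformly bounded norms of $(\mathcal{S}_{\mathcal{D}_N}^0)^{-1}$ and $e^{\gamma x_1}$ over the bounded resonator $D_i$ into a constant $K$ yields the claim.

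The hard part is making the two-factor decay rigorous and uniform in $N$. The naive bound $\|\mathcal{T}\| \lesssim 1$ is not enough, because it gives only a single $1/\mathrm{dist}$ factor through the evaluation kernel, not a product over the two indices $i$ and $j$. To obtain both factors one must exploit separately the source localisation (supports on $\partial D_j$) and the target localisation (evaluation on $\partial D_i$), treating $\mathcal{T}$ as the sum $\sum_{\ell \notin\{1,\dots,N\}} \mathcal{S}^0_{D_\ell\to\cdot}$ and using two-sided kernel bounds $|G^0(x-y)| \lesssim 1/(1+\mathrm{dist}(D_p,D_q))$ for $x\in\partial D_p,\, y\in\partial D_q$. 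This is precisely the kind of estimate carried out in \cite{finiteinfinite, ammari2023spectral} for the Hermitian analogue, and I would mimic that argument: expand the difference as a finite sum of bilinear pairings $\langle G^0(\cdot-y_\ell), (\mathcal{S}_{\mathcal{D}_R}^0)^{-1}\chi_{\partial D_j}\rangle$ indexed by the extra resonators $\ell$, bound each by $K/(\mathrm{dist}(D_i,D_\ell)\,\mathrm{dist}(D_j,D_\ell))$, and sum the geometric/harmonic series over $\ell$ outside $\{1,\dots,N\}$. The resulting sum is controlled by $1/((1+\min(i,N-i))(1+\min(j,N-j)))$, which gives the lemma.
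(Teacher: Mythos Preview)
Your approach is essentially the same as the paper's: write the difference via \eqref{capacitancedef} as an integral involving $(\mathcal{S}_{\mathcal{D}_N}^0)^{-1} - (\mathcal{S}_{\mathcal{D}_R}^0)^{-1}$, then reduce to the Hermitian estimate from \cite{ammari2023spectral}. The paper's proof is in fact even shorter than yours; it simply writes out the difference and immediately invokes \cite[Lemma~3.1]{ammari2023spectral} for the last inequality, whereas you outline the mechanism behind that cited estimate (the resolvent identity and the two-sided $1/|x|$ kernel decay).

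One point where you should be more careful: you say you will absorb ``the uniformly bounded norms of $e^{\gamma x_1}$ over the bounded resonator $D_i$'' into $K$. Literally, $e^{\gamma x_1}$ on $\partial D_i$ is of order $e^{\gamma i}$ and is \emph{not} uniformly bounded in $i$. What is uniformly bounded is the \emph{ratio} $e^{\gamma x_1}/\int_{D_i} e^{\gamma x_1}\,\dx x$, because the normalising integral also scales like $e^{\gamma i}$. The paper makes this explicit by shifting: it rewrites the prefactor as $e^{\gamma(x_1 - x_1^i)}/\int_{D_i} e^{\gamma(x_1 - x_1^i)}\,\dx x$, where $x_1^i$ is the centre of $D_i$, so that the exponential weight is manifestly bounded independently of $i$ before invoking the Hermitian estimate. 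With that correction, your argument goes through.
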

\begin{proof}
    Defining $x_1^i$ to be the $x_1$-coordinate of the center of $D_i$, we can compute
       \begin{equation}
    \begin{split}  
            \babs{(\mathcal{C}_N^\gamma)_{i,j}-(\tilde{\mathcal{C}}_{N}^\gamma)_{i,j}} &=\babs{\frac{\delta }{\int_{D_i}e^{\gamma x_1}\mathrm{d}x}\int_{\partial D_i}e^{\gamma x_1}\left(\mathcal{S}_{\mathcal{D}_N}^{-1}-\mathcal{S}_{\mathcal{D}_R}^{-1}\right)[\chi_{\partial D_{j}}]\mathrm{d}\sigma} \\
             & = \babs{\frac{\delta}{\int_{D_i}e^{\gamma (x_1-x_1^i)}\mathrm{d}x} \int_{\partial D_i} e^{\gamma(x_1 - x_1^i)} \left(\mathcal{S}_{\mathcal{D}_N}^{-1}-\mathcal{S}_{\mathcal{D}_R}^{-1}\right)[\chi_{\partial D_j}]  \mathrm{d}\sigma}  \\
             &\leq \frac{\delta K}{(1+\min(i,N-i))(1+\min(j,N-j))},
    \end{split}
    \end{equation}  
    for some constant $K$ independent of $N$. Here, the last inequality follows from the proof of \cite[Lemma 3.1]{ammari2023spectral}.
\end{proof}

\begin{lemma}\label{lem:capacitancetotoeplitz2}
We have the following estimate:
\begin{equation}
\label{eq:almosttopelitz}
 \babs{(\mathcal{C}_N^\gamma)_{i,j}-\mathcal{C}_{i,j}^\gamma} \leq \frac{\delta K}{(1+\min(i,N-i))(1+\min(j,N-j))}, \ i,j =1,\ldots, N,
\end{equation}
for some constant $K$ independent of $N$.
\end{lemma}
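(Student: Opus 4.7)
The plan is to deduce the estimate by combining \Cref{lemma:estimate} with the convergence result in \Cref{thm:finiteinfinite}, via the triangle inequality and a limiting argument in $R$. The target inequality replaces the truncated matrix $\tilde{\mathcal{C}}_N^\gamma$ (which sits inside a larger finite chain of $R$ resonators) by the genuine Toeplitz matrix $\mathcal{C}^\gamma$ arising from the infinite periodic structure, so the natural idea is to take $R\to\infty$ on the right-hand side of \Cref{lemma:estimate}.

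First I would fix $N$ and $i,j\in\{1,\dots,N\}$, and for any $R>N$ write
\begin{equation*}
\bigl|(\mathcal{C}_N^\gamma)_{i,j}-\mathcal{C}^\gamma_{i,j}\bigr|
\;\le\;
\bigl|(\mathcal{C}_N^\gamma)_{i,j}-(\tilde{\mathcal{C}}_N^\gamma)_{i,j}\bigr|
\;+\;
\bigl|(\tilde{\mathcal{C}}_N^\gamma)_{i,j}-\mathcal{C}^\gamma_{i,j}\bigr|.
\end{equation*}
The first term is controlled directly by \Cref{lemma:estimate}, which yields a bound of the form $\delta K/\bigl((1+\min(i,N-i))(1+\min(j,N-j))\bigr)$ with a constant $K$ that the statement of that lemma asserts to be independent of $N$ (and, crucially, of $R$, since the estimate is inherited from \cite[Lemma 3.1]{ammari2023spectral}). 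The second term tends to $0$ as $R\to\infty$ by \Cref{thm:finiteinfinite}. Letting $R\to\infty$ thus gives \eqref{eq:almosttopelitz}.

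The only point that requires care, and which I would check explicitly, is the $R$-uniformity of the constant $K$ in \Cref{lemma:estimate}. The proof of that lemma passes through the estimate of $(\mathcal{S}_{\mathcal{D}_N}^{-1}-\mathcal{S}_{\mathcal{D}_R}^{-1})[\chi_{\partial D_j}]$ on $\partial D_i$, and the cited decay estimate for this quantity produces a bound depending only on the distances $\min(i,N-i)$ and $\min(j,N-j)$ inside the chain of length $N$, not on how many extra resonators the ambient chain of length $R$ contains. Hence the bound persists in the limit $R\to\infty$, and the triangle-inequality argument above concludes the proof.
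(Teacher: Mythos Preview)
Your proposal is correct and follows essentially the same approach as the paper's own proof: split via the triangle inequality into the term controlled by \Cref{lemma:estimate} and the term $\babs{(\tilde{\mathcal{C}}_N^\gamma)_{i,j}-\mathcal{C}^\gamma_{i,j}}$, then use \Cref{thm:finiteinfinite} to make the latter arbitrarily small by choosing $R$ large. Your explicit remark that the constant $K$ in \Cref{lemma:estimate} must be uniform in $R$ (so that the bound survives the limit) is a point the paper's proof leaves implicit.
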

\begin{proof}
    From Theorem \ref{thm:finiteinfinite}, it follows that for every $\varepsilon>0$, there exists a large enough $R$ so that $\babs{(\tilde{\mathcal{C}}_{N}^\gamma)_{i,j}-\mathcal{C}_{i,j}^\gamma}<\varepsilon$. The desired estimate follows from Lemma \ref{lemma:estimate} and the triangular inequality.
\end{proof}

The following classical result allows us to describe the decay property of the entries of the gauge capacitance matrix. We  
refer, for instance, to \cite{harmonicanalysis} for its proof. 
\begin{lemma} \label{thm:RLlemma}
If $f\in BV(\mathbb{T})$, then for the $n$-th Fourier coefficient $\hat{f}(n)$ it holds
    \begin{equation}
        |\hat{f}(n)| \leq \frac{\operatorname{Var}_{\mathbb{T}}(f)}{2 \pi|n|}, \quad n \neq 0.
    \end{equation}
    Here, $\mathbb{T}$ is the unit circle, $BV(\mathbb{T})$ is the set of functions of bounded variation on $\mathbb{T}$, and 
    $\operatorname{Var}_{\mathbb{T}}(f)$ is the total variation of $f$. 
\end{lemma}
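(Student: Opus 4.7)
The plan is to prove the bound by a single integration-by-parts argument, treating $f$ as inducing a finite signed Borel measure $df$ on $\mathbb{T}$, which is the natural way to handle BV functions (and in particular allows jump discontinuities). First I would write the Fourier coefficient as
\begin{equation*}
\hat{f}(n) = \frac{1}{2\pi} \int_0^{2\pi} f(\theta)\, e^{-in\theta}\, d\theta,
\end{equation*}
and then use the Riemann--Stieltjes integration-by-parts formula with $u(\theta) = f(\theta)$ and $v(\theta) = e^{-in\theta}/(-in)$, which is where the hypothesis $n \neq 0$ enters. This yields
\begin{equation*}
\hat{f}(n) = \frac{1}{2\pi} \left[ f(\theta)\, \frac{e^{-in\theta}}{-in}\right]_0^{2\pi} - \frac{1}{2\pi} \int_0^{2\pi} \frac{e^{-in\theta}}{-in}\, df(\theta).
\end{equation*}

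Next I would observe that the boundary term vanishes: since $f$ is a function on the circle $\mathbb{T}$ we may take $f(0)=f(2\pi)$, and in any event $e^{-in\cdot 0} = e^{-in\cdot 2\pi} = 1$ for $n \in \Z$, so the two boundary contributions cancel. (If one prefers not to identify the endpoints of $f$, one uses left/right limits together with the fact that the jump $f(2\pi^-) - f(0^+)$ is counted in $\operatorname{Var}_{\mathbb{T}}(f)$; this only affects the constant and not the decay rate.) After this cancellation, we are left with
\begin{equation*}
\hat{f}(n) = \frac{1}{2\pi\, in} \int_0^{2\pi} e^{-in\theta}\, df(\theta).
\end{equation*}

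Finally I would take absolute values and use the standard bound for a Stieltjes integral against a BV function,
\begin{equation*}
\left| \int_0^{2\pi} g(\theta)\, df(\theta) \right| \leq \sup_{\theta}|g(\theta)| \cdot \operatorname{Var}_{\mathbb{T}}(f),
\end{equation*}
applied to $g(\theta) = e^{-in\theta}$, which has modulus one. This gives
\begin{equation*}
|\hat{f}(n)| \leq \frac{1}{2\pi|n|}\, \operatorname{Var}_{\mathbb{T}}(f),
\end{equation*}
as claimed. The only delicate point is the integration-by-parts step for a general BV function, since $f$ need not be continuous; the cleanest route is to identify $f$ (up to redefinition on a countable set) with its right-continuous representative and invoke the Lebesgue--Stieltjes integration-by-parts formula, or equivalently decompose $f$ into its absolutely continuous, singular continuous and jump parts and verify each case separately. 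This is the main technical obstacle, but it is entirely standard; everything else is a one-line estimate.
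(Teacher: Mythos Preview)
Your proof is correct and is exactly the standard argument for this classical inequality. The paper does not actually supply its own proof of this lemma; it simply states the result and writes ``We refer, for instance, to [harmonicanalysis] for its proof.'' So there is nothing to compare against beyond noting that your integration-by-parts argument via the Lebesgue--Stieltjes measure $df$ is the canonical proof one finds in the cited references, and your handling of the boundary term (vanishing because $e^{-in\theta}$ is $2\pi$-periodic and $f$ lives on $\mathbb{T}$) and of the possible discontinuities of $f$ is appropriately careful.
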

Finally, from Lemma \ref{thm:RLlemma} we obtain the following decay estimate for the entries of the gauge capacitance matrix. 
\begin{prop}
    For $i\neq j$, there exists a constant $K$, independent of $N$, so that
    \begin{equation}\label{equ:capacitancematrixesitmate1}
        \abs{\mathcal{C}^\gamma_{i, j}} \leq  \frac{\delta K}{\abs{i-j}}.
    \end{equation}
\end{prop}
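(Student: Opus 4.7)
The plan is to recognize that $\mathcal{C}^\gamma_{i,j}$ is, up to a constant, the $(i-j)$-th Fourier coefficient of the Brillouin-zone symbol $\alpha \mapsto \hat{\mathcal{C}}^{\alpha,\gamma}$, and then apply the bounded-variation Fourier decay estimate of \Cref{thm:RLlemma}. Concretely, the identification \eqref{defrealspace},
\begin{equation*}
\mathcal{C}^\gamma_{i,j} = \frac{1}{|Y^*|} \int_{Y^*} \hat{\mathcal{C}}^{\alpha,\gamma}\, e^{-\mathrm{i}\alpha (i-j)}\, \mathrm{d}\alpha,
\end{equation*}
says exactly that $\mathcal{C}^\gamma_{i,j}$ is the $(i-j)$-th Fourier coefficient of $\hat{\mathcal{C}}^{\cdot,\gamma}$ after rescaling $Y^*$ to $\mathbb{T}$. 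Since the left-hand side only depends on $i-j$, this also re-confirms the Toeplitz structure.

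The second step is to verify that $\alpha \mapsto \hat{\mathcal{C}}^{\alpha,\gamma}$ belongs to $BV(\mathbb{T})$, with a total variation that is bounded by $\delta$ times a constant depending only on $D$ and $\gamma$ (in particular, independent of $N$). Given the explicit formula
\begin{equation*}
\hat{\mathcal{C}}^{\alpha,\gamma} = -\frac{\delta}{\int_D e^{\gamma x_1}\mathrm{d}x} \int_{\partial D} e^{\gamma x_1}(\mathcal{S}_D^{\alpha,0})^{-1}[\chi_{\partial D}]\, \mathrm{d}\sigma,
\end{equation*}
this reduces to a regularity statement on the map $\alpha \mapsto (\mathcal{S}_D^{\alpha,0})^{-1}[\chi_{\partial D}]$. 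Away from $\alpha = 0$, differentiability in $\alpha$ follows from termwise differentiation of the rapidly convergent lattice sum \eqref{eq:Galph} together with the holomorphy of the resolvent of $\mathcal{S}_D^{\alpha,0}$. Near $\alpha = 0$, the quasiperiodic Green's function develops the standard singularity inherited from the Poisson summation of the three-dimensional free-space Green's function; isolating this singular part and showing the remainder is $C^1$ in $\alpha$ yields the integrability of $\partial_\alpha \hat{\mathcal{C}}^{\alpha,\gamma}$ and hence the $BV$ bound. This is exactly the route followed in \cite{ammari2023spectral}, and the $\gamma$-perturbation does not affect the principal singular behaviour since $G^0_\gamma$ and $G^0$ share the same leading singularity.

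Combining these two steps, \Cref{thm:RLlemma} yields
\begin{equation*}
|\mathcal{C}^\gamma_{i,j}| = \bigl|\widehat{\hat{\mathcal{C}}^{\cdot,\gamma}}(i-j)\bigr| \leq \frac{\operatorname{Var}_{\mathbb{T}}(\hat{\mathcal{C}}^{\cdot,\gamma})}{2\pi |i-j|} \leq \frac{\delta K}{|i-j|},
\end{equation*}
for $i \neq j$, with $K$ independent of $N$, which is the desired bound.

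The main obstacle is the control of the total variation of $\hat{\mathcal{C}}^{\alpha,\gamma}$ at the singular point $\alpha = 0$, where $\mathcal{S}_D^{\alpha,0}$ is not uniformly invertible. This is the only genuinely analytical ingredient and will require an Ewald-type splitting of $G^{\alpha,0}$ (or a perturbative argument around $\alpha=0$) to separate a smooth part from an explicitly computable singular part whose contribution to the variation can still be bounded by a constant. Everything else — the identification as a Fourier coefficient, the Toeplitz structure, and the final application of \Cref{thm:RLlemma} — is essentially bookkeeping.
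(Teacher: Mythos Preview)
Your proposal is correct and follows essentially the same route as the paper: identify $\mathcal{C}^\gamma_{i,j}$ as the $(i-j)$-th Fourier coefficient of $\alpha\mapsto\hat{\mathcal{C}}^{\alpha,\gamma}$ via \eqref{defrealspace}, establish that this symbol has bounded variation on $Y^*$ (the paper invokes \cite[Section 3.3]{alphalipschitz} for piecewise differentiability with absolutely integrable derivative, which is the same mechanism you describe), and conclude by \Cref{thm:RLlemma}. One small remark: the $\gamma$-dependence of $\hat{\mathcal{C}}^{\alpha,\gamma}$ enters only through the smooth weight $e^{\gamma x_1}$ in the integral, not through a modified Green's function, so the regularity in $\alpha$ is governed purely by the standard $(\mathcal{S}_D^{\alpha,0})^{-1}$ and your aside about $G^0_\gamma$ versus $G^0$ is unnecessary here.
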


\begin{proof} 
 We first note that by exactly the same arguments as those in \cite[Section 3.3]{alphalipschitz},
 we can show that $\alpha\mapsto \hat{\mathcal{C}}^{\alpha, \gamma}$
 is piecewise differentiable whose derivative is absolutely integrable over $Y^*$. It follows that it has bounded variation.
 By definition (\ref{defrealspace}), $\mathcal{C}^\gamma_{i,j}$ is the $(i-j)$-th Fourier coefficient of $\hat{\mathcal{C}}^{\alpha, \gamma}$, so \Cref{thm:RLlemma} shows that
    \begin{equation} \label{decayC}
        \babs{\mathcal{C}^{\gamma}_{i, j}}\leq  \frac{\delta K}{\babs{i-j}}. \qedhere
    \end{equation} 
\end{proof}
Finally, we generalise this decay estimate to the finite gauge capacitance matrix. We will use the following classical notion of the (regular) capacitance matrix.
\begin{definition}
The capacitance matrix $C_N = (C_N)_{i,j}$, for  $i,j = 1,\ldots,N$, is defined by
    \begin{equation}
    (C_N)_{i,j} = -\int_{\partial D_i} (\mathcal{S}^{0}_{\D_N})^{-1}[\chi_{\partial D_j}](x)\,  \mathrm{d}\sigma(x).
\end{equation}   
\end{definition}
It is well-known that the diagonal coefficients $(C_N)_{i,i}$ are positive while the off-diagonal coefficients $(C_N)_{i,j}, i\neq j$ are negative \cite{ammari2018mathematical}. The capacitance coefficients are equivalently given by  (see, for example, \cite{ammari.davies.ea2021Functional})
\begin{equation}\label{eq:C}
    (C_N)_{i,j} = -\int_{\partial D_i}\frac{\partial V_j(x)}{\partial \nu}\,  \mathrm{d}\sigma(x),
\end{equation}
where $V_j$ is given by the solution to the following problem:
\begin{equation}\label{eq:Vj}\begin{cases}
\Delta V_j = 0,  & x\in \R^3 \setminus \D_N, \\ 
V_j(x) = \chi_{\partial D_i}(x), &x\in \partial \D_N,\\
V_j(x) = O(|x|^{-1}), & x\to \infty.
\end{cases}\end{equation}
We begin with the following lemma, which states that the capacitance coefficient associated to two given resonators can only increase if we add additional resonators.
\begin{lemma}\label{lem:increase}
Let $\D_N$ and $\D_{N+1}$ be collections of $N$ and $N+1$ resonators, such that $\D_{N+1} = \D_N \cup D_{N+1}$.  For all $1<i,j<N$ we then have
$$(C_N)_{i,j} \leq (C_{N+1})_{i,j}.$$
\end{lemma}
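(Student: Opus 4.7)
The plan is to represent each capacitance coefficient through the harmonic extensions $V_j$ from \eqref{eq:C}--\eqref{eq:Vj} and to deduce the monotonicity from a comparison between $V_j^{(N)}$ and $V_j^{(N+1)}$ by the exterior maximum principle. Throughout, I write $V_j^{(N)}$ and $V_j^{(N+1)}$ for the potentials associated with $\D_N$ and $\D_{N+1}$ respectively, and let $\nu$ denote the outward unit normal on $\partial D_i$ (i.e.\ pointing into the exterior domain).

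First I would observe that, since $V_j^{(N)}$ is harmonic on $\R^3 \setminus \D_N$ with non-negative boundary data $\chi_{\partial D_j}$ and decays to zero at infinity, the exterior maximum principle gives $V_j^{(N)} \geq 0$ throughout $\R^3 \setminus \D_N$, and in particular on $\partial D_{N+1}$. I would then set $u := V_j^{(N)} - V_j^{(N+1)}$ on $\R^3 \setminus \D_{N+1}$. This function is harmonic there (the smaller exterior is contained in $\R^3 \setminus \D_N$, so both summands are harmonic), it decays like $|x|^{-1}$ at infinity, and its boundary values are easily identified: on $\partial D_k$ with $1 \leq k \leq N$ both summands equal $\chi_{\partial D_j}(x)$, so $u \equiv 0$; on $\partial D_{N+1}$ we have $V_j^{(N+1)} = 0$ while $V_j^{(N)} \geq 0$, so $u \geq 0$.

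A second invocation of the maximum principle then forces $u \geq 0$ throughout $\R^3 \setminus \D_{N+1}$. Since $u$ vanishes on $\partial D_i$ while being non-negative on the exterior side, the one-sided directional derivative satisfies $\partial u / \partial \nu \geq 0$ pointwise on $\partial D_i$. Subtracting the integral representation \eqref{eq:C} for $C_{N+1}$ and $C_N$ then yields
\[
(C_{N+1})_{i,j} - (C_N)_{i,j} \;=\; \int_{\partial D_i} \frac{\partial u}{\partial \nu}\, \mathrm{d}\sigma \;\geq\; 0,
\]
which is the claim.

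There is no real obstacle here; the only point that needs care is the legitimacy of the exterior maximum principle on an unbounded domain, which is supplied by the decay $V_j(x) = O(|x|^{-1})$ as $|x|\to\infty$ built into \eqref{eq:Vj}. I note in passing that the argument never uses the hypothesis $1 < i,j < N$ and therefore in fact yields the inequality for every $1 \leq i,j \leq N$.
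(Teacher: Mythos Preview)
Your proof is correct and follows essentially the same route as the paper's: define $u = V_j^{(N)} - V_j^{(N+1)}$, use the exterior maximum principle to show $u \geq 0$ in $\R^3 \setminus \D_{N+1}$, and conclude that $\partial u/\partial\nu \geq 0$ on $\partial D_i$ from the fact that $u$ vanishes there. Your remark that the hypothesis $1<i,j<N$ is not actually needed is also correct.
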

\begin{proof}
Let $V_{N,j}$ and $V_{N+1,j}$, respectively, be the unique solutions to the problems
$$\begin{cases}
\Delta V_{N,j} = 0,  & x\in \R^3 \setminus \D_N, \\ 
V_{N,j}(x) = \chi_{\partial D_j}(x), &x\in \partial \D_N,\\
V_{N,j}(x) = O(|x|^{-1}), & x\to \infty,
\end{cases} \qquad 
\begin{cases}
\Delta V_{N+1,j} = 0,  & x\in \R^3 \setminus \D_{N+1}, \\ 
V_{N+1,j}(x) = \chi_{\partial D_j}(x), & x\in \partial \D_{N+1}, \\
V_{N+1,j}(x) = O(|x|^{-1}), & x\to \infty.
\end{cases}
$$
From the maximum principle, we observe that $0<V_{N,j}(x),V_{N+1,j}(x)<1$. Then we define $u(x) = V_{N,j}(x) - V_{N+1,j}(x)$. Note that $u$ satisfies
$$
\begin{cases}
\Delta u = 0,  & x\in \R^3 \setminus \D_{N+1}, \\ 
u(x) = 0, & x\in \partial \D_{N}, \\
0<u(x) < 1, & x\in \partial D_{N+1}, \\
u(x) = O(|x|^{-1}), & x\to \infty.
\end{cases}$$
Again, by the maximum principle we have $0<u(x)<1$ in $\R^3 \setminus \D_{N+1}$. Together with the boundary condition $u(x)=0$ on $\partial \D_{N}$, this implies that  
$$\frac{\partial u}{\partial \nu}  \geq 0, \quad x\in \partial \D_{N}.$$
From \eqref{eq:C}, we then have 
$$(C_{N+1})_{i,j} - (C_{N})_{i,j} = \int_{\partial D_i}\frac{\partial u}{\partial \nu} \dx \sigma \geq 0,$$
which proves the claim.
\end{proof}
We now have the following decay estimate of the gauge capacitance coefficients.
\begin{prop}\label{lem:capacitancematrixesitmate1}
    Let $d_{i,j}$ be the minimal distance between $D_i$ and $D_j$. For $i\neq j$, there exists a constant $K$, depending only on the shape of $D_i$ and $D_j$, such that $\abs{(\mathcal{C}_N)_{i, j}^\gamma} \leq  \delta Kd_{i,j}^{-1}$. In particular, we have a constant $K'$, independent of $N$, such that 
    \begin{equation}\label{equ:decayofcapamatrixelements1}
        \abs{(\mathcal{C}_N)_{i, j}^\gamma} \leq  \frac{\delta K'}{\abs{i-j}}.
    \end{equation}
\end{prop}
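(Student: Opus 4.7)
The plan is to bound the gauge capacitance coefficient by the ordinary (non-gauge) capacitance coefficient, then to reduce by monotonicity to the two-body case, and finally to estimate the two-body mutual capacitance via Green's identity and a spherical comparison.

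First, set $\phi_j = (\mathcal{S}^0_{\mathcal{D}_N})^{-1}[\chi_{\partial D_j}]$ and $V_j = \mathcal{S}^0_{\mathcal{D}_N}[\phi_j]$. Since $V_j$ is harmonic inside each $D_k$ with boundary value $\chi_{\partial D_j}|_{\partial D_k}$, uniqueness forces $V_j \equiv 0$ in $D_i$ for $i\neq j$; the jump relation then gives $\phi_j = \partial V_j/\partial\nu|_+$ on $\partial D_i$, and the maximum principle shows $\phi_j \geq 0$ there. Writing $e^{\gamma x_1} = e^{\gamma i}\,e^{\gamma(x_1 - i)}$ on $D_i$, the translational factor $e^{\gamma i}$ cancels between the two integrals in \eqref{capacitancedef}, and boundedness of $e^{\gamma(x_1 - i)}$ on $\partial D_i$ yields
\[
|(\mathcal{C}_N^\gamma)_{i,j}| \leq \delta\, C_{D,\gamma}\, |(C_N)_{i,j}|,
\]
with $C_{D,\gamma}$ depending only on $D$ and $\gamma$.

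Next, iterating \Cref{lem:increase} by removing every resonator other than $D_i$ and $D_j$, and using that off-diagonal capacitance coefficients are negative, we obtain $|(C_N)_{i,j}| \leq |(C_2)_{i,j}|$, where $C_2$ denotes the capacitance matrix of $\{D_i, D_j\}$ alone. To bound the two-body coefficient, let $W_i$ be the equilibrium potential of $D_i$ in isolation (harmonic in $\R^3\setminus D_i$, equal to $1$ on $\partial D_i$, decaying at infinity). Applying Green's identity in $\R^3\setminus\mathcal{D}_2$ to the harmonic pair $(W_i, V_j^{(2)})$ and using that $W_i$ is harmonic inside $D_j$ (so $\int_{\partial D_j}\partial W_i/\partial\nu\,\mathrm{d}\sigma = 0$) yields the representation
\[
(C_2)_{i,j} = \int_{\partial D_j} W_i \,\frac{\partial V_j^{(2)}}{\partial\nu}\bigg|_+ \mathrm{d}\sigma.
\]
The spherical comparison $W_i(x) \leq M_i/|x-c_i|$, with $c_i$ the centroid and $M_i$ depending only on the shape of $D_i$, gives $W_i \leq M_i/d_{i,j}$ on $\partial D_j$; since $\partial V_j^{(2)}/\partial\nu|_+$ has constant sign on $\partial D_j$ (so its $L^1$-norm equals $(C_2)_{j,j}$), we obtain $|(C_2)_{i,j}| \leq (M_i/d_{i,j})\,(C_2)_{j,j}$.

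The final ingredient is the uniform boundedness of $(C_2)_{j,j}$ across all configurations $(i,j)$. This follows from a Dirichlet-principle energy bound using a smooth cut-off supported in a neighbourhood of $D_j$ and vanishing near $D_i$; the fixed positive minimum gap $d_{1,2}$ between any two distinct resonators in the array ensures such a cut-off exists with uniformly controlled gradient. Combining with the previous steps gives $|(\mathcal{C}_N^\gamma)_{i,j}| \leq \delta K/d_{i,j}$, and the $1/|i-j|$ version follows from the elementary geometric inequality $d_{i,j} \geq c\,|i-j|$ for a constant $c>0$ depending only on $D$. \emph{The main difficulty lies precisely in this last uniform bound on the two-body diagonal capacitance}: without the fixed positive minimum gap the diagonal would blow up as $d_{i,j}\to 0$, which is why the constant $K$ inevitably depends on the shape of $D$ (effectively through $d_{1,2}$) rather than purely on $D_i$ and $D_j$ individually.
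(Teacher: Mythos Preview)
Your proof is correct and follows the same three-step strategy as the paper: bound the gauge coefficient by the ordinary capacitance coefficient via sign-constancy of $\phi_j|_{\partial D_i}$ and cancellation of the translational factor $e^{\gamma i}$, reduce to the two-body case via \Cref{lem:increase}, then bound the two-body mutual capacitance by $K/d_{i,j}$. The only difference is that the paper simply cites \cite[Lemma~4.3]{ammari2020topologically} for this last step, whereas you supply a self-contained argument via Green's identity, the spherical comparison $W_i\leq M_i/|x-c_i|$, and a Dirichlet-energy bound on $(C_2)_{j,j}$; this extra work is sound and makes the proof independent of the external reference.
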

\begin{proof}
We begin by observing that
    \begin{equation}
    \begin{split}
     \babs{(\mathcal{C}_N)_{i,j}^\gamma}
    &= \babs{\frac{\delta_i v_i^2}{\int_{D_i}e^{\gamma x_1} \mathrm{d}x}\int_{\partial D_i}e^{\gamma x_1}\psi_{j}\mathrm{d}\sigma}  \\ 
    &\leq \delta K_1 \int_{\partial D_i}|\psi_{j}|\mathrm{d}\sigma,
    \end{split}\label{eq:est1}
    \end{equation}
    for some constant $K_1$. We moreover have 
    $$\psi_j = \left(\frac{1}{2} + \K_{\D_N}^{0,*}\right)[\psi_j] = \frac{\partial}{\partial \nu} V_j(x),$$
    where $V_j$ is defined as in \eqref{eq:Vj}. In the same way as in the proof of \Cref{lem:increase}, the maximum principle and boundary conditions implies that $\partial_\nu V_j \geq 0$ on $\partial \D_N \setminus \partial D_j$. From \eqref{eq:est1}, it therefore follows that 
    \begin{equation}\label{eq:est2}
    \babs{(\mathcal{C}_N)_{i,j}^\gamma} \leq \delta K_1 \babs{(C_N)_{i,j}},
    \end{equation}
    where $(C_N)_{i,j}$ are the regular capacitance coefficients, defined as in \eqref{eq:C}. Let now $\D_2 = D_i \cup D_j$ be the resonator structure consisting solely of the two components $D_i$ and $D_j$, and let $C_2$ be the $2\times 2$ capacitance matrix associated to $\D_2$. Following \cite[Lemma 4.3]{ammari2020topologically}, we then have 
    $$|(C_2)_{1,2}| < \frac{K_2}{d_{i,j}},$$
    for some constant $K_2$. Moreover, since $(C_N)_{i,j}$ and $(C_2)_{1,2}$  are both negative, we have from \Cref{lem:increase} that 
    $$|(C_N)_{i,j}| \leq |(C_2)_{1,2}| < \frac{K_2}{d_{i,j}}.$$
    This, combined with \eqref{eq:est2}, proves the claim.
\end{proof}

    Figure \ref{fig:decay} shows the decay rate of the entries of the  gauge capacitance matrix, which is in accordance with Proposition \ref{lem:capacitancematrixesitmate1}. We remark that the decay rate is close to $|i-j|^{-r}$ for $r\approx 2$. The precise decay rate depends on the geometry of the resonators. In the dilute limit (when the resonators are asymptotically small), the decay rate will approach $r=1$ \cite{ammari2020topologically}. Here, we use the multipole method, as described in \cite[Appendix C]{bandalpha}, to compute the associated gauge capacitance matrix.
\begin{figure}[!h]
    \centering
    \includegraphics[width = 0.8\textwidth]{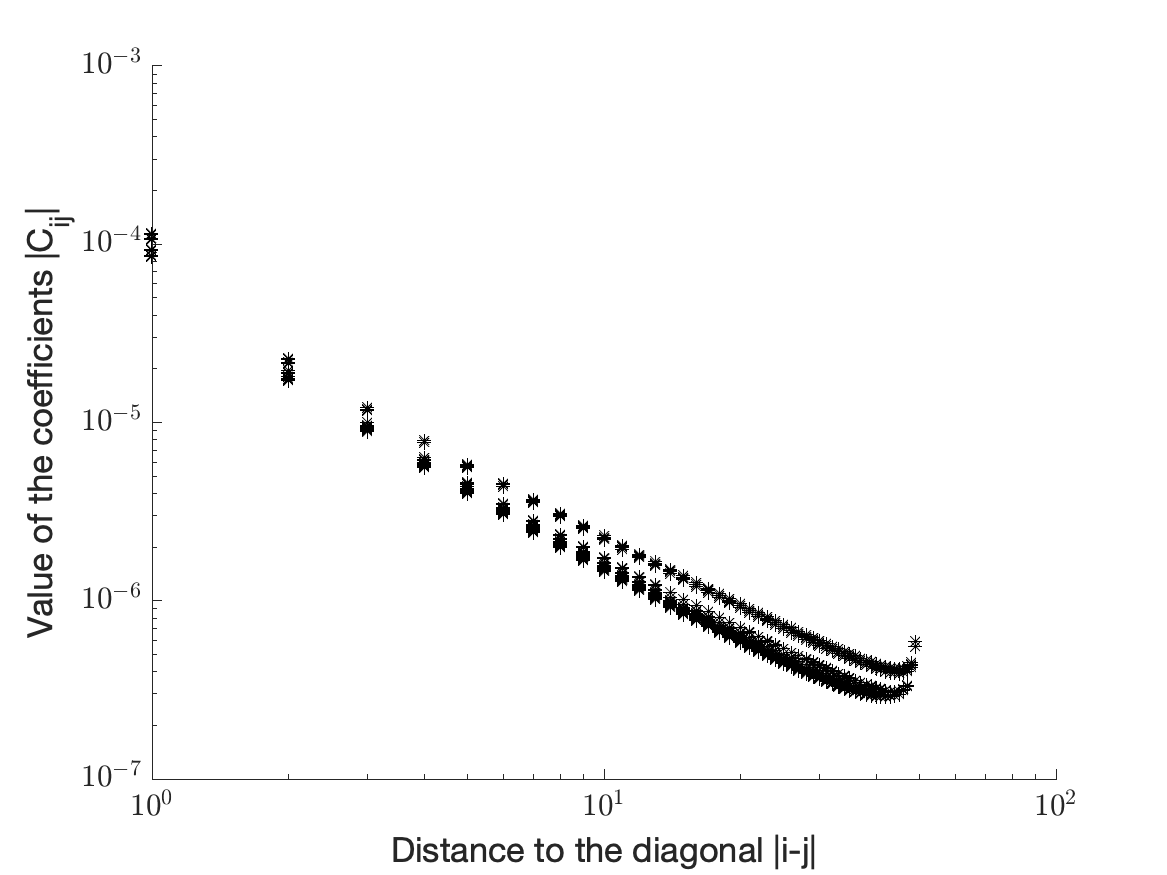}
    \caption{Decay of the entries of the gauge capacitance matrix for $N=100$ and $\gamma = 1$ in $log$-$log$ scale. The $x$-axis represents the index $\babs{i-j}$ and the $y$-axis displays the absolute value of $(C_N^\gamma)_{i,j}$.}
    \label{fig:decay}
\end{figure}

\section{Non-Hermitian skin effect} \label{sect6}
 As shown in item (ii) of Theorem \ref{thmgauge}, at leading-order in $\delta$, each subwavelength resonant mode $u_{n}$ of (\ref{helmholtz}) is determined by an eigenvector of the capacitance matrix $\mathcal C_N^{\gamma}$. Thus, the non-Hermitian skin effect is related to the exponential decay of the eigenvectors of the gauge capacitance matrix $\mathcal C_{N}^{\gamma}$.  Although a precise characterisation of the exponential decay of eigenvectors of a dense Toeplitz matrix remains an open problem, we can demonstrate the exponential decay of the pseudo-eigenvectors $\vect v$ of $\mathcal C_{N}^{\gamma}$ satisfying
\[
\frac{\bnorm{\left(\mathcal C_{N}^{\gamma}-\lambda I\right) \mathbf{v}}_2}{\bnorm{\vect v}_2} < \epsilon
\]
for a certain pseudo-eigenvalue $\lambda$ and small $\epsilon$. Here, $I$ denotes the identity matrix. Demonstrating the exponential decay of the pseudo-eigenvectors of $\mathcal C_{N}^{\gamma}$ is still non-trivial. A standard approach is to make use of a tridiagonal approximation similar to the nearest-neighbour approximation in quantum mechanics, where we only consider  interactions between neighbouring resonators. This results in a 
tridiagonal gauge capacitance matrix $\mathcal C_{N,2}^{\gamma}$ defined as in (\ref{equ:bandedcapamatrix1}) for $k=2$. The non-Hermitian skin effect for such model has been thoroughly discussed in \cite{ammari2023mathematical, dimerSkin}. However, due to the long-range interactions between the resonators, the elements of $\mathcal C_{N}^{\gamma}$ decay slowly as shown in (\ref{equ:decayofcapamatrixelements1}), making the nearest-neighbour approximation inaccurate. For instance, the first several modes of $\mathcal C_{N}^{\gamma}$ in Figure \ref{fig:k1N50} differ considerably from those of $\mathcal C_{N,2}^{\gamma}$. A straightforward generalisation of the nearest-neighbour approximation is a range-$k$ approximation, where we consider the interactions of neighbouring $k$-resonators. This results in a $k$-banded gauge capacitance matrix ${\mathcal C}_{N, k}^{\gamma}$ defined by (\ref{equ:bandedcapamatrix1}). In the next subsection, we shall characterise the exponential decay of the pseudo-eigenvectors of ${\mathcal C}_{N, k}^{\gamma}$ and in Subsection \ref{section:skineffectcapamatrix} we shall show that these pseudo-eigenvectors approximate well the pseudo-eigenvectors of the matrix $\mathcal C_{N}^{\gamma}$, given large enough $k$ and $N$. 

\subsection{Exponential decay of pseudo-eigenvectors of $k$-banded gauge capacitance matrices}\label{section:skineffectkbandedcapamatrix}
We first define a $k$-banded gauge capacitance matrix ${\mathcal C}_{N, k}^{\gamma}$ from $\mathcal C_N^{\gamma}$ as 
\begin{equation}\label{equ:bandedcapamatrix1}
\left({\mathcal C}_{N, k}^{\gamma}\right)_{i,j}= \left\{\begin{array}{ll}
\left(\mathcal C_N^{\gamma}\right)_{i,j}, & \text { for } |i-j|<k, \\
0, & \text { for } |i-j|\geq k,
\end{array} \quad 1 \leq i, j \leq N, \right.
\end{equation}
and demonstrate the exponential decay of its pseudo-eigenvectors. We let, for $\babs{i-j}<k$,  
\begin{equation}\label{equ:toeplitzelements1}
a_{j-i}=  \mathcal{C}_{i,j}^{\gamma}
\end{equation}
with $\mathcal{C}_{i,j}^{\gamma}$ being defined by (\ref{defrealspace}). The matrix whose $(i,j)$\textsuperscript{th} entry is $a_{i-j}$ is a Toeplitz matrix, whose symbol is given by
\begin{equation}\label{equ:bandedsymbol1}
f(z) = \sum_{j=-(k-1)}^{k-1} a_j z^j. 
\end{equation}
Define $\mathbb T:= \{z\in \mathbb C: \babs{z}=1\}$ and $\mathbb T_r: = \{z\in \mathbb C: \babs{z}=r\}$. Let $I(f(\mathbb T), \lambda)$ be the winding number of $f(\mathbb T)$ around $\lambda$ in the usual positive (counterclockwise) sense.



Define 
 \begin{equation}\label{equ:xiformula1}
 \xi(i, j) = \sum_{q=i}^{j}\frac{1}{q}.
 \end{equation}
Then by Abel's criteria, $\sum_{j=1}^{\infty}\frac{\xi(j+k_1,j+k_2)^2}{j^2}$ is convergent for any $k_1<k_2$ and $j+k_1\geq 1$. The following result on the pseudo-eigenvectors of $\mathcal C_{N,k}^{\gamma}$ holds. 
\begin{thm}\label{thm:pseduoeigenvectorthm0}
For any $\epsilon>0$, we can choose an integer $k>0$ so that 
$$
\max\left(\frac{1}{k}, \quad \sum_{j=k+1}^{+\infty}\frac{\xi(j+2-k, j+k)^2}{(1+j)^2}\right)\leq \epsilon
$$ 
with $\xi(\cdot)$ being defined by (\ref{equ:xiformula1}). Let $\lambda$ be any complex number with $I(f(\mathbb T), \lambda) \neq 0$ for $f$ defined by (\ref{equ:bandedsymbol1}). For some $\rho<1$ and any sufficiently large $N>4k$, there exist nonzero pseudo-eigenvectors $\mathbf{v}^{(N)}$ of $\mathcal C_{N,k}^{\gamma}$ with $\bnorm{\vect v^{(N)}}_2=1$ satisfying
\begin{equation}\label{equ:pseudovectorcapamatrixequ0}
\bnorm{\left(\mathcal C_{N,k}^{\gamma}-\lambda\right) \mathbf{v}^{(N)}}_2 < \max\left(C_1, C_2 N^{k-1}\right) \rho^{N-2k+2}+ \delta K \sqrt{10\epsilon}
\end{equation}
such that
\begin{equation}\label{equ:pseudovectorcapamatrixequ1}
\frac{\left|\left(\vect v^{(N)}\right)_j\right|}{\max_{j}\babs{\left(\vect v^{(N)}\right)_j}} \leq\left\{\begin{array}{ll}
C_3\rho^{j-1}, & \text { if } I(f(\mathbb T), \lambda)>0, \\
\nm
C_3\rho^{N-j}, & \text { if } I(f(\mathbb T), \lambda)<0,
\end{array} \quad 1 \leq j \leq N, \right.
\end{equation}
where $C_1, C_2, C_3$ are independent of $N$. In particular, the constant $\rho$ can be taken to be any number so that $1>\rho\geq r$ with $I\left(f\left(\mathbb T_r\right), \lambda\right)>0$ or $\frac{1}{\rho}\geq\frac{1}{r}>1$ with $I\left(f\left(\mathbb T_{\frac{1}{r}}\right), \lambda\right)<0$. 
\end{thm}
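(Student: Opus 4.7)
The plan is to recognise $\mathcal{C}_{N,k}^{\gamma}$ as a perturbed banded Toeplitz matrix and to apply the pseudo-spectral theory from Appendix A to the unperturbed part, controlling the perturbation via the almost-Toeplitz estimate of Section~\ref{sect5}. Write $\mathcal{C}_{N,k}^{\gamma}=T_N(f)+E$, where $T_N(f)$ is the banded Toeplitz matrix with entries $(T_N(f))_{i,j}=a_{j-i}$ (so that its symbol is the Laurent polynomial $f$ in (\ref{equ:bandedsymbol1})). By (\ref{equ:toeplitzelements1}) and Lemma~\ref{lem:capacitancetotoeplitz2}, the perturbation $E$ is supported in the band $|i-j|<k$ and satisfies $|E_{i,j}|\leq \delta K/((1+\min(i,N-i))(1+\min(j,N-j)))$.

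Next, I would apply the pseudo-spectral characterisation of banded Toeplitz matrices provided in Appendix A to $T_N(f)$. In outline, the construction uses the polynomial $p(z):=z^{k-1}(f(z)-\lambda)$ of degree $2(k-1)$: the condition $I(f(\mathbb{T}),\lambda)\neq 0$ dictates how the roots of $p$ split across the unit circle, and a normalised pseudo-eigenvector $\vect v^{(N)}$ is built as a linear combination of the geometric sequences $z_q^{\,j-1}$ associated with the roots $z_q$ having $|z_q|\leq\rho<1$ (when $I>0$) or $|z_q|\geq 1/\rho>1$ (when $I<0$). This makes the left-edge (respectively, right-edge) boundary equations of $(T_N(f)-\lambda)\vect v^{(N)}=0$ hold exactly, so the residual is nonzero only in the opposite $k-1$ coordinates and is of size $O(\rho^N)$. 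The same construction delivers both the entry-wise decay (\ref{equ:pseudovectorcapamatrixequ1}) and the Toeplitz residual bound $\bnorm{(T_N(f)-\lambda)\vect v^{(N)}}_2<\max(C_1,C_2 N^{k-1})\rho^{N-2k+2}$.

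The final step is to prove $\bnorm{E\vect v^{(N)}}_2\leq \delta K\sqrt{10\epsilon}$, after which (\ref{equ:pseudovectorcapamatrixequ0}) follows from the triangle inequality applied to $\mathcal{C}_{N,k}^{\gamma}-\lambda=T_N(f)-\lambda+E$. Using $\bnorm{\vect v^{(N)}}_\infty\leq \bnorm{\vect v^{(N)}}_2=1$, the $i$-th row of $E\vect v^{(N)}$ is controlled by $\delta K\,\xi(i-k+2,i+k)/(1+\min(i,N-i))$, with $\xi$ defined by (\ref{equ:xiformula1}). Squaring, summing over $i$, and splitting into the bulk $k+1\leq i\leq N-k$ and the boundary strips $i\leq k$, $i\geq N-k+1$, the bulk matches the hypothesis $\sum_{j=k+1}^{\infty}\xi(j+2-k,j+k)^2/(1+j)^2\leq\epsilon$ (used once on each half by the symmetry $i\mapsto N+1-i$), while each boundary strip is absorbed by the supplementary clause $1/k\leq\epsilon$; collecting the four contributions produces the factor $10\epsilon$.

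The main obstacle is this last perturbation bound: the entries of $E$ decay only as $(ij)^{-1}$ inside a band of width $2k-1$, which sits right at the borderline of summability. The packaging of the row sum as $\xi(i-k+2,i+k)/(1+\min(i,N-i))$, and in particular the statement of the hypothesis as a \emph{maximum} of two terms, is dictated by the need to control the bulk by the convergent tail series and the boundary layers -- where $\xi(1,\cdot)$ grows logarithmically -- by the crude $1/k$ bound. The exponential decay of $\vect v^{(N)}$ is not used in the perturbation step itself but is inherited directly from Appendix A and carried over unchanged to the pseudo-eigenvector of $\mathcal{C}_{N,k}^{\gamma}$.
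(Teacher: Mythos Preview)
Your decomposition $\mathcal{C}_{N,k}^{\gamma}=T_N(f)+E$ differs from the paper's in a way that creates a genuine gap. The paper does \emph{not} compare to the pure Toeplitz matrix $T_N(f)$; it writes $\mathcal{C}_{N,k}^{\gamma}=A+M$ where $A$ keeps the first and last $k$ rows of $\mathcal{C}_{N,k}^{\gamma}$ unchanged and replaces only the interior rows by the Toeplitz entries $\mathcal{C}^\gamma_{i,j}$. Thus $M$ vanishes identically on the boundary rows, and the corner perturbations are absorbed into $A$, to which the paper applies Theorem~\ref{thm:peudovectorperturbToeplitz1} (the \emph{perturbed} banded Toeplitz result), whose pseudo-eigenvector construction tolerates arbitrary bounded perturbations in the first and last $k-1$ rows.

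In your decomposition the boundary rows of $E$ are too large to be controlled by $\delta K\sqrt{10\epsilon}$. For $i=1$, Lemma~\ref{lem:capacitancetotoeplitz2} gives only $|E_{1,j}|\le \delta K/(2(1+j))$ for $1\le j\le k$, and since the pseudo-eigenvector is of order one near its localisation edge (e.g.\ the left edge when $I(f(\mathbb T),\lambda)>0$) you get
\[
\bigl|(E\vect v^{(N)})_1\bigr|\ \le\ \frac{\delta K}{2}\sum_{j=1}^{k}\frac{1}{1+j}\ \sim\ \frac{\delta K}{2}\ln k,
\]
which diverges as $k\to\infty$. The clause $1/k\le\epsilon$ forces $k\to\infty$ as $\epsilon\to 0$, so this single row already exceeds $\delta K\sqrt{10\epsilon}$. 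Your claim that the boundary strips are ``absorbed by $1/k\le\epsilon$'' is a misreading of how the paper uses that hypothesis: there it controls the $2k-1$ rows around $j=\lfloor N/2\rfloor$, where $(1+\min(j,N-j))\sim N/2$ is large and the total contribution is $O((\delta K)^2/k)$, not the rows near $j=1$ or $j=N$. The fix is exactly the paper's: move the first and last $k$ rows of $\mathcal{C}_{N,k}^{\gamma}$ into the Toeplitz part and invoke Theorem~\ref{thm:peudovectorperturbToeplitz1} instead of the pure Toeplitz statement.
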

\begin{proof}
We decompose the $k$-banded matrix ${\mathcal C}_{N, k}^{\gamma}$ as 
\[
{\mathcal C}_{N,k}^{\gamma} = A+ M,
\]
where the entries of the matrices $A$ and $M$ are respectively given by
\begin{equation}
A_{i,j}=\begin{cases}
\left({\mathcal C}_{N,k}^{\gamma} \right)_{i,j}, & 1\leq i\leq k, \text{ or } N-(k-1)\leq i\leq N, \\
\nm
\mathcal C_{i, j}^{\gamma},  &\babs{i-j}< k, k<i< N-(k-1),\\
0, & \babs{i-j}\geq k,
\end{cases}
\end{equation}
and 
\begin{equation}
M_{i,j} = \begin{cases}
0, & 1\leq i\leq k, \text{ or } N-(k-1)\leq i\leq N,\\
\left({\mathcal C}_{N,k}^{\gamma} \right)_{i,j} - \mathcal C_{i,j}^\gamma,  &\babs{i-j}< k, k<i< N-(k-1),\\
0, & \babs{i-j}\geq k.
\end{cases}
\end{equation}
Note that, based on the definition of $\mathcal C_{i,j}^{\gamma}$ in (\ref{defrealspace}),  $A$ is a $k$-banded perturbed Toeplitz matrix like the matrix in (\ref{equ:perturbedtoeplitzmatrix1}). By Theorem \ref{thm:peudovectorperturbToeplitz1}, we have that for any   complex number $\lambda$ with $I(f(\mathbb T), \lambda) \neq 0$, where $f$ is defined by (\ref{equ:bandedsymbol1}) and for some $\rho<1$ and sufficiently large $N>4k$, there exist nonzero pseudo-eigenvectors $\mathbf{v}^{(N)}$ with $\bnorm{\vect v^{(N)}}_2=1$ satisfying
$$
\bnorm{\left(A-\lambda\right) \mathbf{v}^{(N)}}_2 \leq \max\left(C_1, C_2 N^{k-1}\right) \rho^{N-2k+2},
$$
such that
\begin{equation}\label{equ:proofbandedcapamatrix4}
\frac{\left|\left(\vect v^{(N)}\right)_j\right|}{\max_{j}\babs{\left(\vect v^{(N)}\right)_j}} \leq\left\{\begin{array}{ll}
C_3\rho^{j-1}, & \text { if } I(f(\mathbb T), \lambda)>0, \\
C_3\rho^{N-j}, & \text { if } I(f(\mathbb T), \lambda)<0,
\end{array} \quad 1 \leq j \leq N, \right.
\end{equation}
where $C_1, C_2, C_3$ are independent of $N$. To prove (\ref{equ:pseudovectorcapamatrixequ0}), we  estimate $\bnorm{M\vect v^{(N)}}_2$. By the definition of $M$, we have $\left(M \vect v^{(N)}\right)_{j}=0$ for $1\leq j\leq k, N-(k-1)\leq j\leq N$. For $k+1\leq j\leq \lfloor \frac{N}{2}\rfloor-k$, we have  
\begin{align*}
\babs{\left(M \vect v^{(N)}\right)_{j}} =& \babs{\sum_{q=j+1-k}^{j+k-1}M_{j, q}\left(\vect v^{(N)}\right)_q} \leq \sum_{q=j+1-k}^{j+k-1}\babs{M_{j, q}}\babs{\left(\vect v^{(N)}\right)_q} \\
\leq & \sum_{q=j+1-k}^{j+k-1} \frac{\delta K}{(1+j)(1+q)}\quad \left(\text{by Lemma \ref{lem:capacitancetotoeplitz2}, $\bnorm{\vect v^{(N)}}_2=1$, and $N>4k$}\right)\\
= & \frac{1}{1+j}\sum_{q=j+1-k}^{j+k-1} \frac{\delta K}{1+q}= \frac{\delta K}{1+j} \xi(j+2-k, j+k), 
\end{align*}
where $\xi(j+2-k, j+k)$ is defined as in (\ref{equ:xiformula1}). Therefore, 
\[
\sum_{j=k+1}^{\lfloor \frac{N}{2}\rfloor-k}\babs{\left(M \vect v^{(N)}\right)_{j}}^2 \leq (\delta K)^2\sum_{j=k+1}^{\lfloor \frac{N}{2}\rfloor}\frac{\xi(j+2-k, j+k)^2}{(1+j)^2}<(\delta K)^2 \epsilon,
\]
where the last inequality is from the condition on $k$. 
Similarly, we can also prove that 
\[
\sum_{j=\lfloor \frac{N}{2}\rfloor+k}^{N-k}\babs{\left(M \vect v^{(N)}\right)_{j}}^2< (\delta K)^2 \epsilon.
\]
For $\lfloor \frac{N}{2}\rfloor-k+1\leq j\leq \lfloor \frac{N}{2}\rfloor+k-1$, we have 
\begin{align*}
\babs{\left(M \vect v^{(N)}\right)_{j}} =& \babs{\sum_{q=j+1-k}^{j+k-1}M_{j, q}\left(\vect v^{(N)}\right)_q} \leq \sum_{q=j+1-k}^{j+k-1}\babs{M_{j, q}}\babs{\left(\vect v^{(N)}\right)_q} \\
\leq & \sum_{q=j+1-k}^{j+k-1} \frac{\delta K}{(1+\lfloor\frac{N}{2}\rfloor-k+1)(1+\lfloor\frac{N}{2}\rfloor-k+1)}\ \left(\text{by Lemma \ref{lem:capacitancetotoeplitz2}, $\bnorm{\vect v^{(N)}}_2=1, N>4k$}\right)\\
\leq & \frac{2\delta K}{k} \quad \left(\text{since $N> 4k$}\right).
\end{align*}
It follows that 
\[
\sum_{j = \lfloor \frac{N}{2}\rfloor-k+1}^{\lfloor \frac{N}{2}\rfloor+k-1} \babs{\left(M \vect v^{(N)}\right)_{j}}^2\leq \frac{8(\delta K)^2}{k}\leq 8(\delta K)^2\epsilon. 
\]
Combining all above estimates, we have   
\[
\bnorm{M \vect v^{(N)}}_2 <  \delta K \sqrt{10\epsilon},
\]
and 
\begin{align*}
\bnorm{\left(\mathcal C_{N,k}^{\gamma}-\lambda\right) \mathbf{v}^{(N)}}_2\leq& \bnorm{\left(A-\lambda\right) \mathbf{v}^{(N)}}_2 + \bnorm{M\vect v^{(N)}}_2 \\
< &\max\left(C_1, C_2 N^{k-1}\right) \rho^{N-2k+2}+\delta K \sqrt{10\epsilon}.
\end{align*}
\end{proof}

We have demonstrated exponential decay of the pseudo-eigenvectors of the $k$-banded matrix $\mathcal C_{N,k}^\gamma$ for sufficiently large $N$. In particular, by Theorem \ref{thm:pseduoeigenvectorthm0}, when the winding number $I(f(\mathbb T), \lambda)$ for the corresponding symbol $f$ is non-zero, then there must be pseudo-eigenvectors of $\lambda$ with certain exponential decay. On the other hand, we stress that the exponential decay property of the pseudo-eigenvectors may not be valid for general Toeplitz matrices; see \cite{trefethen2005spectra}. 

Finally, we numerically illustrate the results in  Theorem \ref{thm:pseduoeigenvectorthm0}. In particular, Figures \ref{fig:symbol1} and \ref{fig:symbolm1} show respectively the symbol functions of the $k$-banded Toeplitz matrices $\mathcal{C}_{N,k}^\gamma$ for $\gamma=\pm 1$ evaluated on the unit circle $\mathbb{T}$. As predicted by Theorem \ref{thm:pseduoeigenvectorthm0}, all the points inside the circle are the pseudo-eigenvalues of $\mathcal C_{N,k}^{\gamma}$ for large enough $N$, including the black dots, which are the eigenvalues of the gauge capacitance matrix $\mathcal C_{N}^{\gamma}$. \Cref{fig:symbol1b} shows the eigenvectors of $\mathcal C_{N}^{\gamma}$ corresponding to the black and red dots. In particular, the localised eigenmodes in gray are the pseudo-eigenmodes of $\mathcal{C}_{N,k}^\gamma$. As seen from \Cref{fig:symbol1,fig:symbolm1}, the winding numbers and the positions of the pseudo-eigenvalues correctly predict the exponential decay of the corresponding pseudo-eigenmodes. As seen in red in \Cref{fig:symbol1b,fig:symbolm1b}, the non-localised mode corresponds to the eigenvalue of $\mathcal C_N^{\gamma}$ outside the enclosed region with non-zero winding.

    \begin{figure}[!h]
        \centering
        \begin{subfigure}[b]{0.45\textwidth}
            \includegraphics[width=\textwidth]{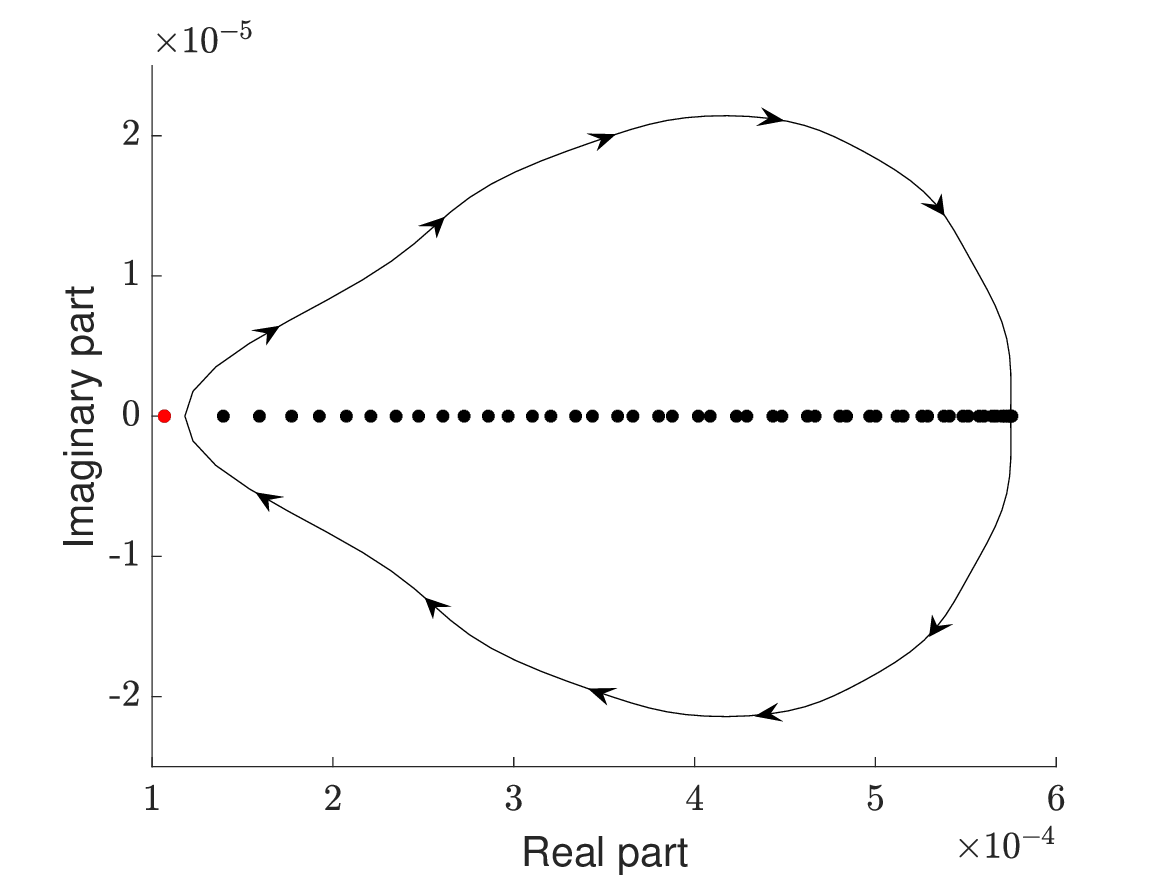}
            \caption{Symbol function $f(\mathbb{T})$ and eigenvalues (asterisks) plotted in the complex plane.}
        \end{subfigure}        
        \begin{subfigure}[b]{0.45\textwidth}
            \includegraphics[width=\textwidth]{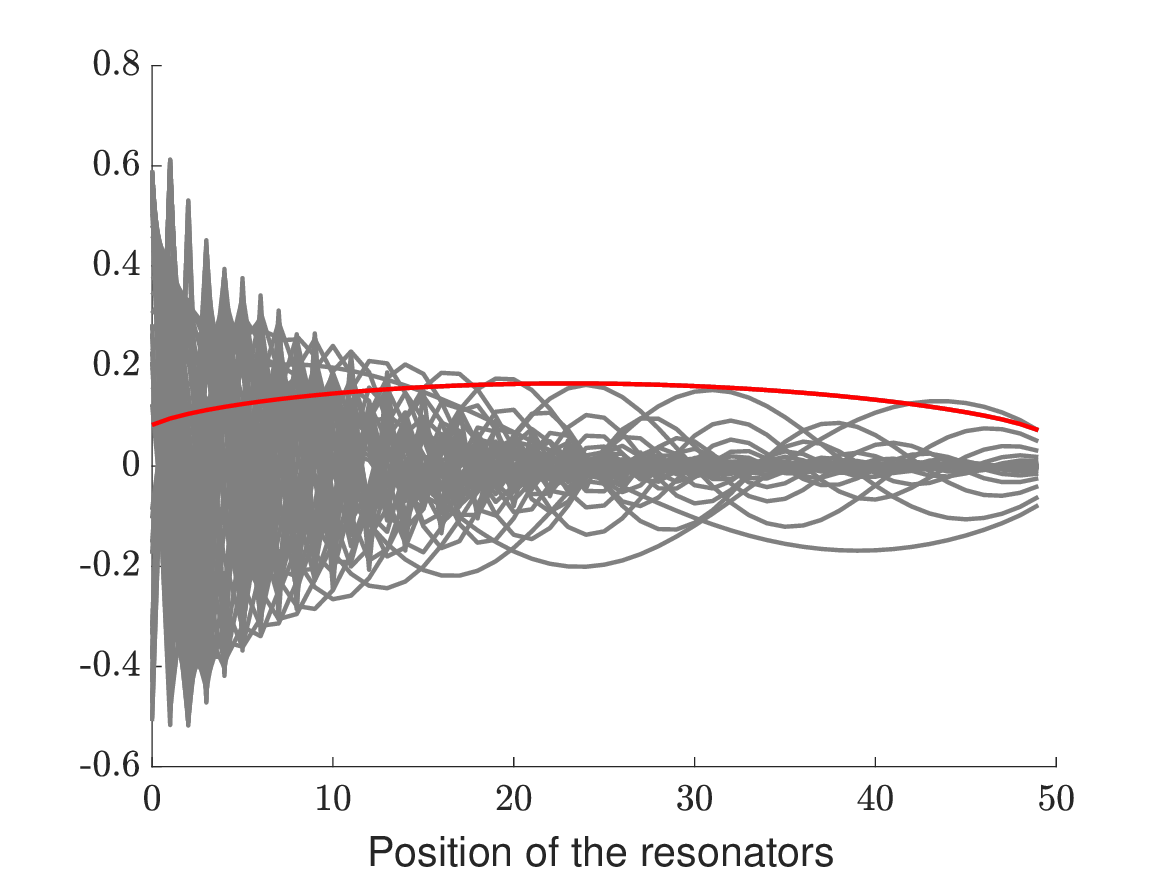}
            \caption{Eigenmodes of $\mathcal{C}_{N}^\gamma$. \\ \hspace{0pt} }
            \label{fig:symbol1b}
        \end{subfigure}     
        \caption{Chain of resonators with $N=50$ and $\gamma = 1$; symbol function of $\mathcal{C}_{N,k}^\gamma$ showing  a winding number equals to $-1$. Here, we choose $k=10$.  \label{fig:symbol1}}
    \end{figure}
        \begin{figure}[!h]
        \centering
        \begin{subfigure}[b]{0.45\textwidth}
            \includegraphics[width=\textwidth]{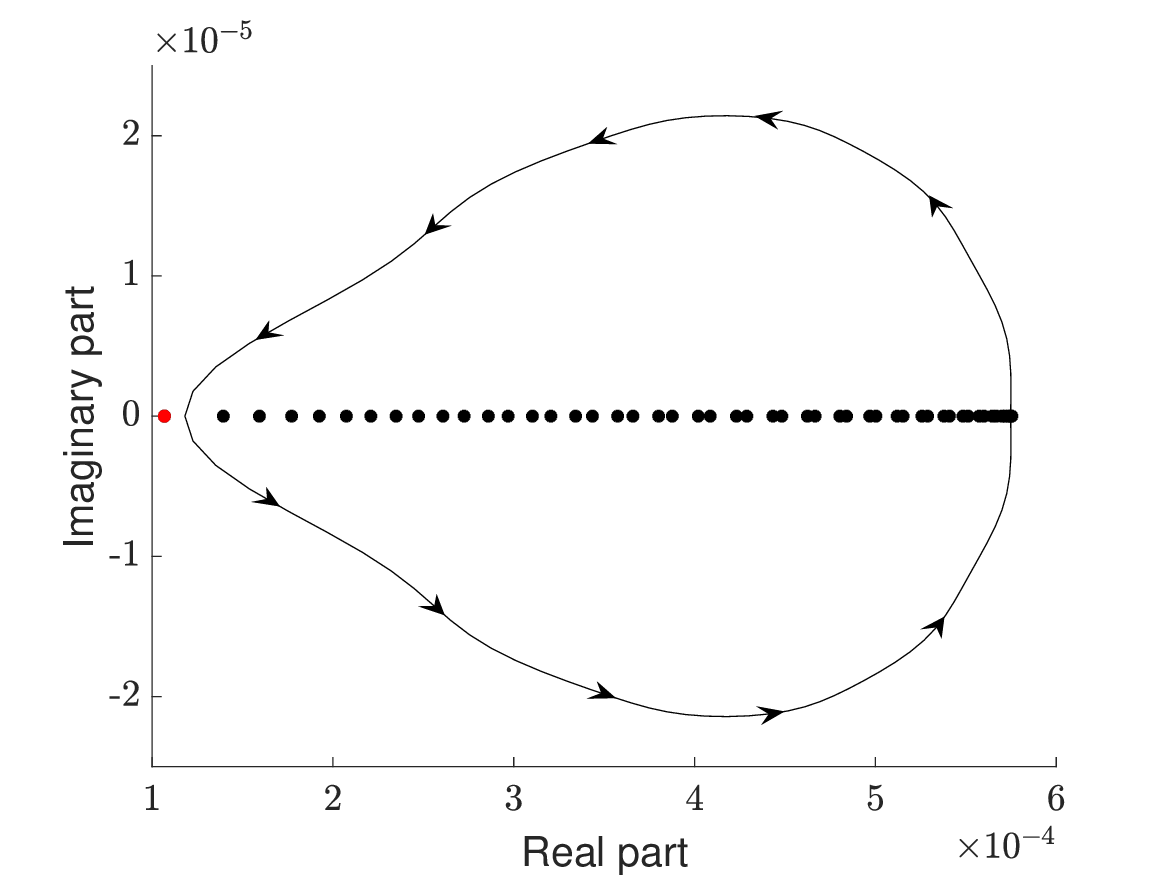}
            \caption{Symbol function $f(\mathbb{T})$ and eigenvalues (asterisks) plotted in the complex plane. }
        \end{subfigure}        
        \begin{subfigure}[b]{0.45\textwidth}
            \includegraphics[width=\textwidth]{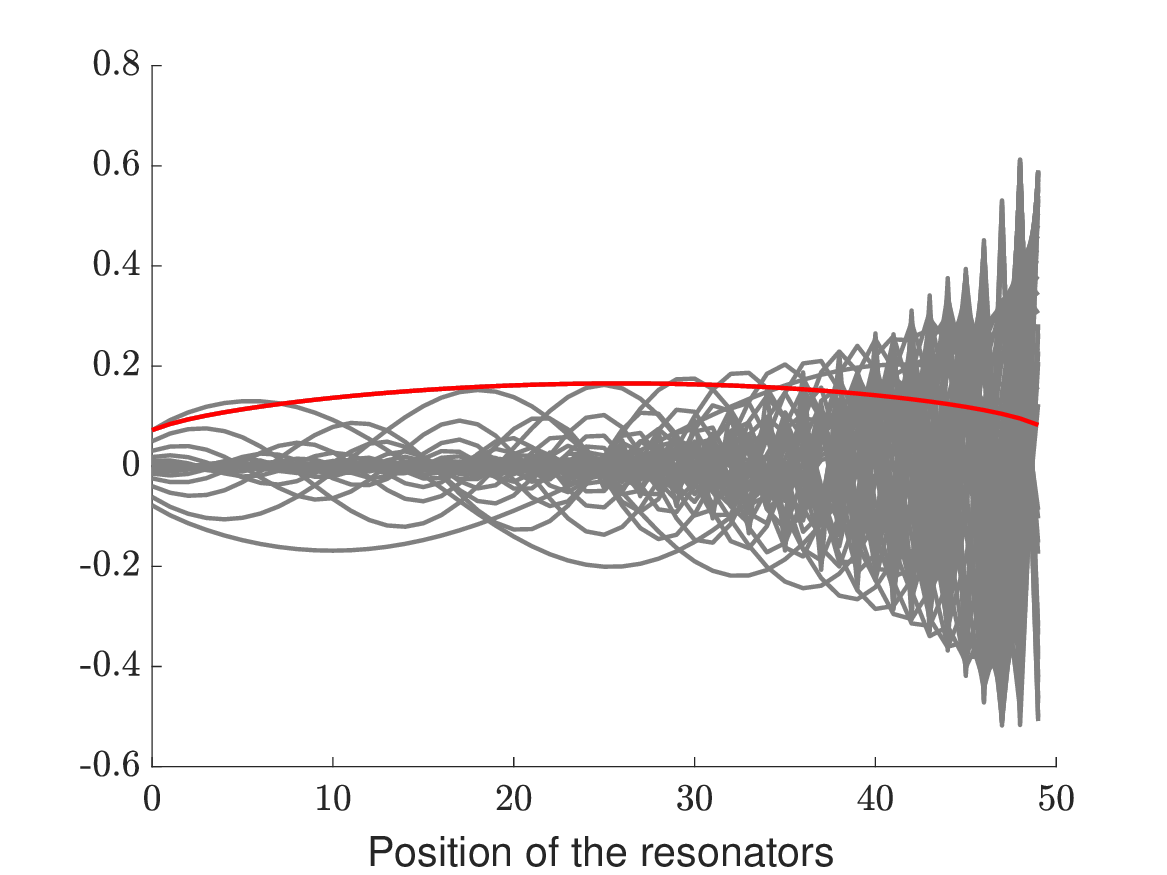}
            \caption{Eigenmodes of $\mathcal{C}_{N}^\gamma$.\\ \hspace{0pt} \label{fig:symbolm1b} }
        \end{subfigure}     
        \caption{Chain of resonators with $N=50$ and $\gamma = -1$; symbol function of $\mathcal{C}_{N,k}^\gamma$ showing  a winding number equals to $1$. Here, we choose $k=10$.}
        \label{fig:symbolm1}
    \end{figure}



\subsection{Exponential decay of pseudo-eigenvectors of gauge capacitance matrices}\label{section:skineffectcapamatrix}
In Subsection \ref{section:skineffectkbandedcapamatrix}, we have demonstrated the exponential decay of the pseudo-eigenvectors of the $k$-banded gauge capacitance matrix $\mathcal C_{N,k}^{\gamma}$ corresponding to $\lambda$ so that $I(f(\mathbb T), \lambda)\neq 0$ and numerically illustrated it. In this section, we demonstrate that, for sufficiently large $k$, the pseudo-eigenvectors of $\mathcal C_{N,k}^{\gamma}$ having the exponential decay property are also  pseudo-eigenvectors of $\mathcal C_N^{\gamma}$. In particular, we have the following theorem. 
\begin{thm}\label{thm:pseduoeigenvectorthm1}
For any $\epsilon_1>0$, we can choose an integer $k>0$ so that 
\[
\max\left(\frac{1}{k},\ \sum_{j=k}^{+\infty}\frac{1}{j^2}\right)< \epsilon_1.
\]
Then, for any unit pseudo-eigenvector of $ {\mathcal C}_{N,k}^{\gamma}$ satisfying 
\begin{equation}\label{equ:pseduoeigenvectorthm1equ0}
{\bnorm{\left({\mathcal {C}}_{N,k}^{\gamma}-\lambda I\right) \mathbf{v}^{(N)}}_2} \leq \epsilon_2, 
\end{equation}
and
\begin{equation}\label{equ:pseduoeigenvectorthm1equ1}
\frac{\left|\left(\vect v^{(N)}\right)_j\right|}{\max_{j}\babs{\left(\vect v^{(N)}\right)_j}}\leq 
C\rho^{j-1}, j=1,\ldots, N, \text{ or }\ \frac{\left|\left(\vect v^{(N)}\right)_j\right|}{\max_{j}\babs{\left(\vect v^{(N)}\right)_j}}\leq 
C\rho^{N-j}, j=1,\ldots, N,
\end{equation}
for some constant $C$ and $0<\rho<1-\epsilon_1$, we have
\begin{equation}\label{equ:pseudoeigenvectorequ1}
{\bnorm{\left(\mathcal {C}_N^{\gamma}-\lambda I\right) \mathbf{v}^{(N)}}_2}\leq \epsilon_2+\delta K C\left(\frac{1-\epsilon_1}{1-\epsilon_1-\rho}\sqrt{\epsilon_1}+\epsilon_1 \rho^{k}\frac{1}{1-\rho}\sqrt{\frac{1}{1-\rho^2}}\right).
\end{equation}
\end{thm}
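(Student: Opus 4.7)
The plan is to decompose $\mathcal{C}_N^{\gamma} = \mathcal{C}_{N,k}^{\gamma} + E$, where $E := \mathcal{C}_N^{\gamma} - \mathcal{C}_{N,k}^{\gamma}$ is the ``tail'' matrix whose entries coincide with those of $\mathcal{C}_N^{\gamma}$ for $\babs{i-j}\ge k$ and vanish otherwise. A single application of the triangle inequality then gives
\[
\bnorm{(\mathcal{C}_N^{\gamma} - \lambda I)\mathbf{v}^{(N)}}_2 \le \bnorm{(\mathcal{C}_{N,k}^{\gamma} - \lambda I)\mathbf{v}^{(N)}}_2 + \bnorm{E\mathbf{v}^{(N)}}_2 \le \epsilon_2 + \bnorm{E\mathbf{v}^{(N)}}_2,
\]
so the entire task reduces to controlling $\bnorm{E\mathbf{v}^{(N)}}_2$ by the parenthesised quantity on the right-hand side of \cref{equ:pseudoeigenvectorequ1}. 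The decay estimate $\babs{E_{ij}} \le \delta K/\babs{i-j}$ in the range $\babs{i-j}\ge k$ is provided by \Cref{lem:capacitancematrixesitmate1}.

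Assuming the left-edge decay hypothesis $\babs{(\mathbf{v}^{(N)})_j}\le C\rho^{j-1}$ (the right-edge case follows by reversing the roles below), the next step is to split each coordinate of $E\mathbf{v}^{(N)}$ into its backward and forward contributions
\[
(E\mathbf{v}^{(N)})_i \;=\; \underbrace{\sum_{j\le i-k} E_{ij}(\mathbf{v}^{(N)})_j}_{A_i} \;+\; \underbrace{\sum_{j\ge i+k} E_{ij}(\mathbf{v}^{(N)})_j}_{B_i},
\]
and bound $\bnorm{A}_2$ and $\bnorm{B}_2$ separately. The forward part $B_i$ is the straightforward one: using $\babs{E_{ij}}\le \delta K/(j-i)$ together with $\babs{(\mathbf{v}^{(N)})_j}\le C\rho^{j-1}$ and the pointwise bound $1/m \le 1/k \le \epsilon_1$, a direct geometric-series computation yields $\babs{B_i} \le \delta K C \epsilon_1 \rho^{i+k-1}/(1-\rho)$; squaring and summing over $i$ then gives
\[
\bnorm{B}_2 \;\le\; \delta K C\,\epsilon_1\,\rho^{k}\,\frac{1}{1-\rho}\sqrt{\frac{1}{1-\rho^{2}}},
\]
which is exactly the second summand in the target bound.

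The real obstacle is the backward part $A_i$, because the naive pointwise bound $\sum_{j=1}^{i-k}\rho^{j-1}/(i-j)$ does not decay in $i$, so proceeding termwise would insert a spurious $\sqrt{N}$ factor and fail. The remedy is to recognise $A$ as a one-sided truncated convolution $A_i = \delta K\,(h\ast u)_i$ with kernel $h_m = m^{-1}\mathbb{1}_{m\ge k}$ and sequence $u_j = (\mathbf{v}^{(N)})_j$, and then to apply Young's convolution inequality in the form $\bnorm{h\ast u}_2 \le \bnorm{h}_2\bnorm{u}_1$. The hypothesis $\sum_{m\ge k}m^{-2}\le \epsilon_1$ gives $\bnorm{h}_2 \le \sqrt{\epsilon_1}$, while the assumption $\rho < 1-\epsilon_1$ combined with the pointwise exponential decay produces the geometric-series estimate $\bnorm{u}_1 \le C\sum_{j\ge 1}\rho^{j-1} \le C(1-\epsilon_1)/(1-\epsilon_1-\rho)$, where the last inequality is the loosening that matches the form stated in the theorem. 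This yields $\bnorm{A}_2 \le \delta K C \sqrt{\epsilon_1}(1-\epsilon_1)/(1-\epsilon_1-\rho)$, i.e.\ the first summand of \cref{equ:pseudoeigenvectorequ1}.

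Putting the two pieces together gives $\bnorm{E\mathbf{v}^{(N)}}_2 \le \bnorm{A}_2 + \bnorm{B}_2$, and adding the $\epsilon_2$ contribution from the triangle inequality completes the argument. The symmetric decay case $\babs{(\mathbf{v}^{(N)})_j}\le C\rho^{N-j}$ follows by the same argument after interchanging the roles of $A_i$ and $B_i$, since the off-diagonal decay of $E$ is symmetric in $\babs{i-j}$. The key conceptual step — and the only one where care is needed — is recognising that the backward tail must be estimated via a translation-invariant (convolution) inequality rather than coordinatewise Cauchy--Schwarz, which is what prevents a dimensional ($N$-dependent) loss in the final estimate.
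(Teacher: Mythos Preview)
Your proof is correct and follows the same global architecture as the paper --- triangle inequality, splitting the tail $E$ into backward and forward parts (the paper's $L$ and $R$), and bounding each separately --- but the treatment of the backward part $A$ is genuinely different. The paper obtains a pointwise bound $\babs{(L\mathbf{v}^{(N)})_j}\le \frac{\delta KC}{j-1}\cdot\frac{1-\epsilon_1}{1-\epsilon_1-\rho}$ via an explicit manipulation of the sum $\sum_{q=k+1}^{j}\frac{1}{q-1}\rho^{j-q}$, introducing an auxiliary parameter $s=\min_{p\ge k}\min_{q\ge 1}(p/(p+q))^{1/q}\ge 1-\epsilon_1$ to convert the harmonic weights into a single factor $1/(j-1)$ and then summing the squares. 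Your route bypasses this by recognising $A$ as the convolution of the kernel $h_m=m^{-1}\mathbb{1}_{m\ge k}$ against $\babs{\mathbf v^{(N)}}$ and invoking Young's inequality $\bnorm{h\ast u}_2\le\bnorm{h}_2\bnorm{u}_1$; the hypothesis $\sum_{m\ge k}m^{-2}\le\epsilon_1$ bounds $\bnorm{h}_2$, the exponential decay bounds $\bnorm{u}_1$, and the loosening $1/(1-\rho)\le(1-\epsilon_1)/(1-\epsilon_1-\rho)$ (valid since $\rho\epsilon_1\ge 0$) recovers the exact form of the target. This is both shorter and more conceptual than the paper's argument, and it makes transparent why the backward term cannot be handled coordinatewise without an $N$-dependent loss. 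The forward part $B$ and the symmetry reduction for the right-edge case match the paper's approach essentially verbatim.
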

\begin{proof}
By Proposition \ref{lem:capacitancematrixesitmate1}, we have 
\begin{equation}\label{equ:proofpseduoeigenvectorthm1}
\abs{(\mathcal{C}^\gamma_N)_{i, j}} \leq  \frac{\delta K}{\abs{i-j}},
\end{equation}
for some constant $K$.  
We decompose $\mathcal C_{N}^\gamma$ as follows
\[
\mathcal C_{N}^\gamma = {\mathcal C}_{N,k}^{\gamma} +L+R,
\]
where the matrices $L$ and $R$ are respectively defined  by
\[
\left(L\right)_{ij}=\begin{cases}
\left(\mathcal C_{N}^{\gamma} \right)_{i, j}, & i-j\geq  k, \\
0, & i-j< k,
\end{cases}
\]
and 
\[
\left(R\right)_{ij}=\begin{cases}
\left(\mathcal C_{N}^{\gamma} \right)_{i, j}, & i-j\leq -k, \\
0, & i-j> -k.
\end{cases}
\]
In order to prove (\ref{equ:pseudoeigenvectorequ1}), we estimate $\bnorm{L \vect v^{(N)}}_2$ and $\bnorm{R \vect v^{(N)}}_2$. Since $\bnorm{\vect v^{(N)}}_2=1$  implies $\max_{j}\left|\left(\vect v^{(N)}\right)_j\right|\leq 1$, (\ref{equ:pseduoeigenvectorthm1equ1}) gives
\begin{equation}\label{equ:proofpseduoeigenvectorthm3}
\left|\left(\vect v^{(N)}\right)_j\right|\leq C \rho^{j-1},  1 \leq j \leq N,\ { or } \left|\left(\vect v^{(N)}\right)_j\right|\leq C \rho^{N-j},  1 \leq j \leq N. 
\end{equation}
First, for $\vect v^{N}$ satisfying 
\begin{equation}\label{equ:proofpseduoeigenvectorthm4}
\left|\left(\vect v^{(N)}\right)_j\right|\leq C\rho^{j-1}, \quad 1\leq j\leq N,
\end{equation}
we calculate  $\bnorm{L \vect v^{(N)}}_2$. Note that the entries of $L\vect v^{(N)}$ are given by
\begin{equation}\label{equ:proofpseduoeigenvectorthm2}
(L\vect v^{(N)})_{j} =\begin{cases}0, & j=1,\ldots, k, \\
\nm
 \ds \sum_{q=k+1}^{j} \left(\mathcal C_N^{\gamma}\right)_{j, q-k} \left(\vect v^{(N)}\right)_{q-k}, & j=k+1,\ldots, N. 
\end{cases}
\end{equation}
Define 
\begin{equation}\label{equ:proofpseduoeigenvectorthm6}
s: = \min_{p\geq k}\min_{q=1,2,\cdots}\left(\frac{p}{p+q}\right)^{\frac{1}{q}}. 
\end{equation}
It is not hard to see that 
\begin{equation}\label{equ:proofpseduoeigenvectorthm7}
1-\epsilon_1 <\frac{k}{k+1}\leq s \leq 1. 
\end{equation}
By (\ref{equ:proofpseduoeigenvectorthm1}) and (\ref{equ:proofpseduoeigenvectorthm4}), we have for $j=k+1, \ldots, N$,
\begin{align*}
\babs{(L\vect v^{(N)})_{j}} = & \babs{\sum_{q=k+1}^{j} \left(\mathcal C_N^{\gamma}\right)_{j, q-k}  \left(\vect v^{(N)}\right)_{q-k}}\leq \sum_{q=k+1}^{j} \babs{\left(\mathcal C_N^{\gamma}\right)_{j, q-k} } \babs{\left(\vect v^{(N)}\right)_{q-k}} \\
\leq  & \delta KC\sum_{q=k+1}^{j} \frac{1}{j-q+k}\rho^{q-k-1} = \delta KC\sum_{q=k+1}^{j} \frac{1}{q-1}\rho^{j-q} \\
=&\delta KC\sum_{q=k+1}^{j} \frac{1}{q-1}s^{j-q} (\rho/s)^{j-q}\\
= & \frac{\delta KC}{j-1}\left(\sum_{q=k+1}^{j} \frac{j-1}{q-1}s^{j-q} (\rho/s)^{j-q} \right) \\
= & \frac{\delta KC}{j-1}\left(\sum_{q=k+1}^{j} \frac{q-1+j-q}{q-1}s^{j-q} (\rho/s)^{j-q} \right)\\
\leq & \frac{\delta KC}{j-1}\left(\sum_{q=k+1}^{j} (\rho/s)^{j-q} \right)\quad \left(\text{by (\ref{equ:proofpseduoeigenvectorthm6})}\right)\\
\leq& \frac{\delta KC}{j-1}\sum_{q=0}^{\infty}\left(\frac{\rho}{1-\epsilon_1}\right)^{q} \quad \left(\text{by (\ref{equ:proofpseduoeigenvectorthm7})}\right)\\
= & \frac{\delta KC}{j-1} \frac{1-\epsilon_1}{1-\epsilon_1-\rho}. 
\end{align*}
Thus, combining the above estimates, we obtain that
\begin{align}\label{equ:proofpseduoeigenvectorthm8}
\bnorm{L\vect v^{(N)}}_2\leq \delta KC\frac{1-\epsilon_1}{1-\epsilon_1-\rho}\sqrt{\sum_{j=k+1}^N\frac{1}{(j-1)^2}}\leq \delta KC\frac{1-\epsilon_1}{1-\epsilon_1-\rho}\sqrt{\epsilon_1},
\end{align}
where the last inequality is from the condition on $k$ in the theorem.

Next, for $\vect v^{(N)}$ satisfying 
\begin{equation}\label{equ:proofpseduoeigenvectorthm5}
\left|\left(\vect v^{(N)}\right)_j\right|\leq C \rho^{N-j}, \quad 1\leq j\leq N,
\end{equation}
we estimate $\bnorm{L \vect v^{(N)}}_2$. The entries of $L\vect v^{(N)}$ are given by (\ref{equ:proofpseduoeigenvectorthm2}). By (\ref{equ:proofpseduoeigenvectorthm1}) and (\ref{equ:proofpseduoeigenvectorthm5}),  we similarly have, for $j=k+1,\ldots, N$, that
\begin{align*}
\babs{(L\vect v^{(N)})_{j}} = & \babs{\sum_{q=k+1}^{j} \left(\mathcal C_N^{\gamma}\right)_{j, q-k}  \left(\vect v^{(N)}\right)_{q-k}}\leq \sum_{q=k+1}^{j} \babs{\left(\mathcal C_N^{\gamma}\right)_{j, q-k} } \babs{\left(\vect v^{(N)}\right)_{q-k}} \\
\leq  & \delta KC\sum_{q=k+1}^{j} \frac{1}{j-q+k}\rho^{N-(q-k)} = \delta KC\sum_{q=k+1}^{j} \frac{1}{q-1}\rho^{N-1-(j-q)}.
\end{align*}
Further estimating, we have
\begin{align*}
 \babs{(L\vect v^{(N)})_{j}} &\leq  \delta KC\sum_{q=k+1}^{j} \frac{1}{q-1}\rho^{N-1-(j-q)}\\
&\leq  \frac{\delta KC}{k}\rho^{N-1-j+(k+1)}\left(\sum_{q=k+1}^{j}\frac{k}{q-1}\rho^{q-(k+1)}\right)\\
&\leq  \frac{\delta KC}{k}\rho^{N-1-j+(k+1)} \left(\sum_{q=k+1}^{j}\rho^{q-(k+1)}\right)\\
&\leq  \frac{\delta KC}{k}\rho^{N-1-j+(k+1)} \frac{1}{1-\rho}.
\end{align*}
Therefore, combining the above estimates yields
\begin{equation}
\begin{aligned}
\bnorm{L\vect v^{(N)}}_2\leq& \frac{\delta KC}{k}\frac{1}{1-\rho}\sqrt{\sum_{j=k+1}^N\rho^{2(N-1-j+(k+1))}}\\
\leq& \frac{\delta KC}{k}\rho^{k}\frac{1}{1-\rho}\sqrt{\frac{1}{1-\rho^2}}\\
\leq& \delta KC\epsilon_1\rho^{k}\frac{1}{1-\rho}\sqrt{\frac{1}{1-\rho^2}},
\end{aligned}
\end{equation}
where the last inequality is from the condition on $k$ in the theorem.

In the same fashion, we can prove that for $\vect v^{(N)}$ satisfying (\ref{equ:proofpseduoeigenvectorthm4}), we have  
\[
\bnorm{R\vect v^{(N)}}_2\leq \delta KC\epsilon_1 \rho^{k}\frac{1}{1-\rho}\sqrt{\frac{1}{1-\rho^2}},
\]
and for 
$\vect v^{(N)}$ satisfying (\ref{equ:proofpseduoeigenvectorthm5}), we have  
\[
\bnorm{R\vect v^{(N)}}_2\leq \delta K C\frac{1-\epsilon_1}{1-\epsilon_1-\rho}\sqrt{\epsilon_1}.
\]
Therefore, we arrive at 
\begin{align*}
\bnorm{(\mathcal {C}_N^{\gamma}-\lambda I) \mathbf{v}^{(N)}}_2\leq& \bnorm{\left({\mathcal C}_{N,k}^{\gamma}-\lambda I\right) \mathbf{v}^{(N)}}_2+\bnorm{L\mathbf{v}^{(N)}}_2+  \bnorm{R\mathbf{v}^{(N)}}_2\\
\leq & \epsilon_2+\delta K C\left(\frac{1-\epsilon_1}{1-\epsilon_1-\rho}\sqrt{\epsilon_1}+\epsilon_1 \rho^{k}\frac{1}{1-\rho}\sqrt{\frac{1}{1-\rho^2}}\right),
\end{align*}
which completes the proof. 
\end{proof}

Theorem \ref{thm:pseduoeigenvectorthm1} elucidates that, for sufficiently large $k>0$, an exponentially decaying pseudo-eigenvector of the $k$-banded  matrix $\mathcal C_{N,k}^{\gamma}$ is also a pseudo-eigenvector of the gauge capacitance matrix $\mathcal C_N^{\gamma}$. Theorems \ref{thm:pseduoeigenvectorthm0} and \ref{thm:pseduoeigenvectorthm1} indicate that the pseudo-eigenvectors/eigenvectors of the gauge capacitance matrix are exponentially decaying when  $I(f(\mathbb T),\lambda)$ is not zero with $f$ being the symbol associated with a sufficiently large $k$-banded submatrix. In particular, together with the numerical results in Figures \ref{fig:symbol1} and \ref{fig:symbolm1}, it is shown that most of the eigenmodes of $\mathcal C_N^{\gamma}$ have the exponential decay property, as most of the eigenvalues of $\mathcal C_N^{\gamma}$ lies inside $f(\mathbb T)$. This verifies the non-Hermitian skin effect of the resonating system (\ref{helmholtz}). 



    Figure \ref{fig:k10N50} shows the first $20$ eigenvectors $\vect v_j$'s of the gauge capacitance matrix $\mathcal{C}_N^\gamma$ (black line) and the corresponding $20$ pseudo-eigenvectors $\vect v_j(\epsilon)$'s of the $k$-banded matrix $\mathcal{C}_{N,k}^\gamma$ for $k=10$ (blue line). We observe that, as predicted by Theorem \ref{thm:pseduoeigenvectorthm1}, pseudo-eigenvectors (and hence, eigenvectors) with stronger exponential decay can be better approximated by corresponding pseudo-eigenvectors of the $k$-banded matrix.
   \begin{figure}[!h]
        \centering
            \includegraphics[width= 0.95\textwidth]{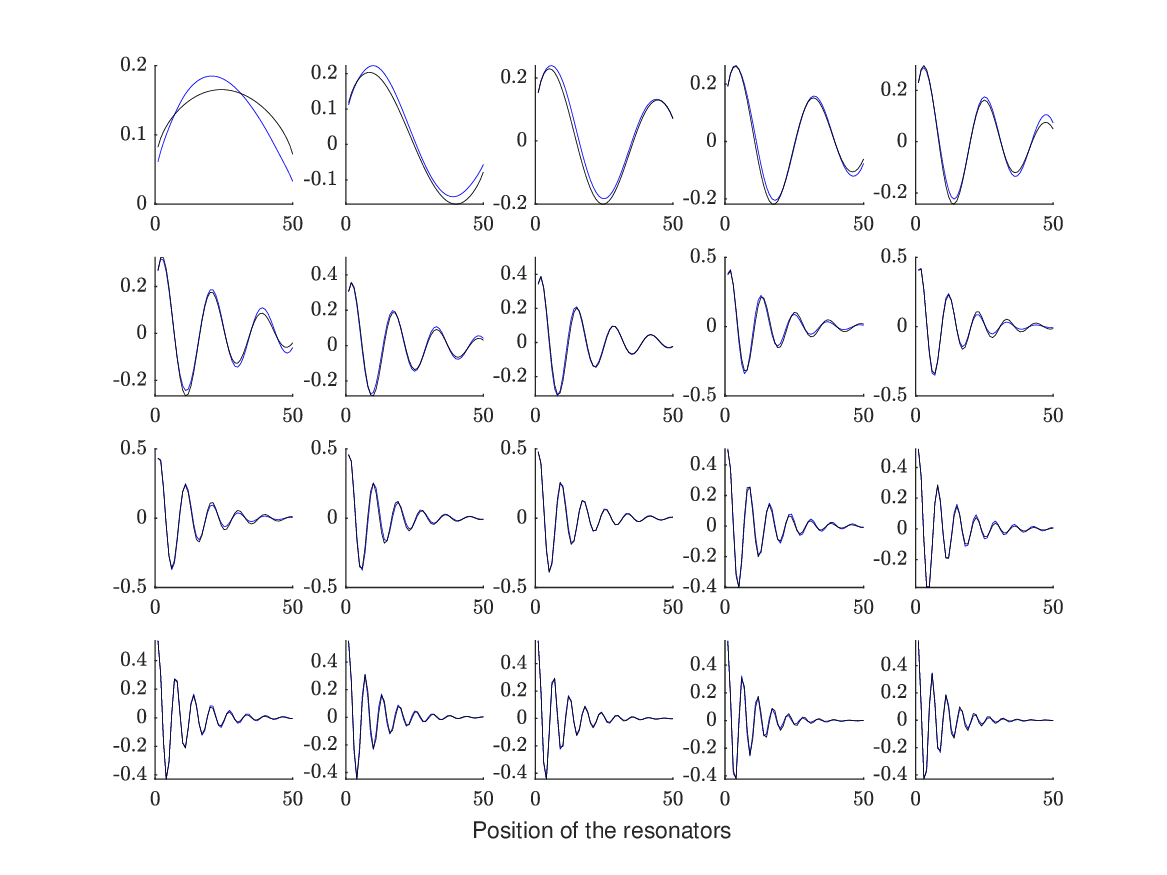}
            \caption{The first $20$ eigenmodes of the gauge capacitance matrix $\mathcal{C}_{N}^\gamma$ are in black. And the blue modes are the pseudo-eigenmodes of the $10$-banded matrix $\mathcal{C}_{N,k}^\gamma$. The eigenmodes are normalized.}
        \label{fig:k10N50}
    \end{figure}

    In Figure \ref{fig:k1N50}, we compare the eigenvectors of the tridiagonal capacitance matrix $\mathcal C_{N,2}^{\gamma}$  and those of $\mathcal C_{N}^{\gamma}$. It is first shown that although the nearest-neighbour approximation is insufficient to approximate all the exponentially decaying eigenvectors of $\mathcal C_{N}^{\gamma}$, it does approximate the ones decaying fast enough. Secondly, we observe that, for the first several modes, due to long-range interactions, the non-Hermitian skin effect in three-dimensional systems of resonators is less pronounced than that predicted by the nearest-neighbour approximation. Thus long-range interactions mainly affect the first several eigenmodes in systems of subwavelength resonators.  
        \begin{figure}[!h]
        \centering
            \includegraphics[width=0.95\textwidth]{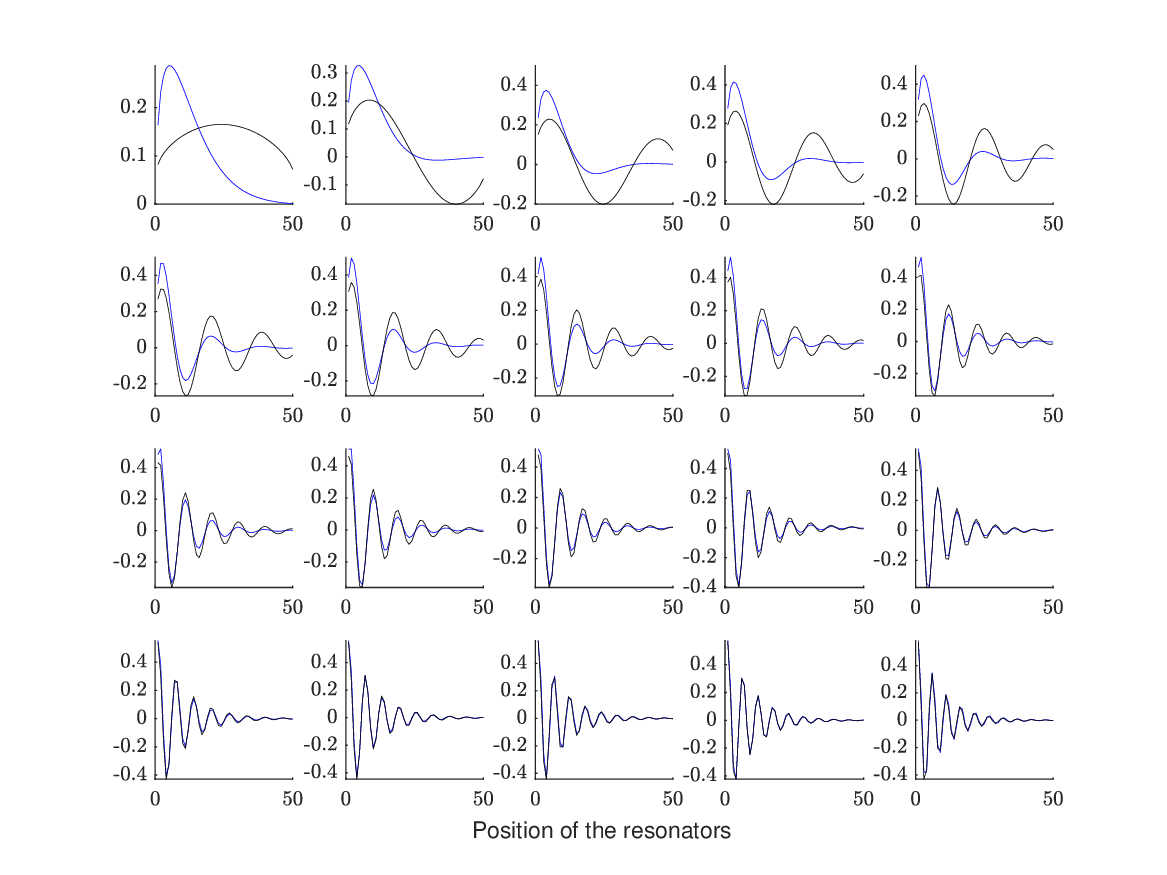}
            \caption{The first $20$ modes of the tridiagonal part of the matrix (in blue) and the full matrix (in black). }
        \label{fig:k1N50}
    \end{figure}

\section{Numerical illustrations of the non-Hermitian skin effect} \label{sect7}
In this section, we provide a variety of further numerical illustrations of the skin effect. We begin by studying the stabiltiy of the skin effect in the presence of disorder. We also compute the skin effect in two-dimensional lattice structures.

\subsection{Numerical simulations of the non-Hermitian skin effect in chains of subwavelength resonators}
   
    In Figure \ref{fig:chain_gammas}, we consider a chain of $100$ identical equidistant spherical resonators in one line aligned with the $x_1$-axis. We plot all the eigenmodes in one graph and demonstrate the condensation of the eigenvectors at one edge of the chain. The condensation becomes more pronounced with increasing $\gamma$. Furthermore, changing the sign of $\gamma$ changes the direction of condensation.
\begin{figure}[!h]
\centering
    \begin{subfigure}[h]{0.33\textwidth}
    \includegraphics[width=\textwidth]{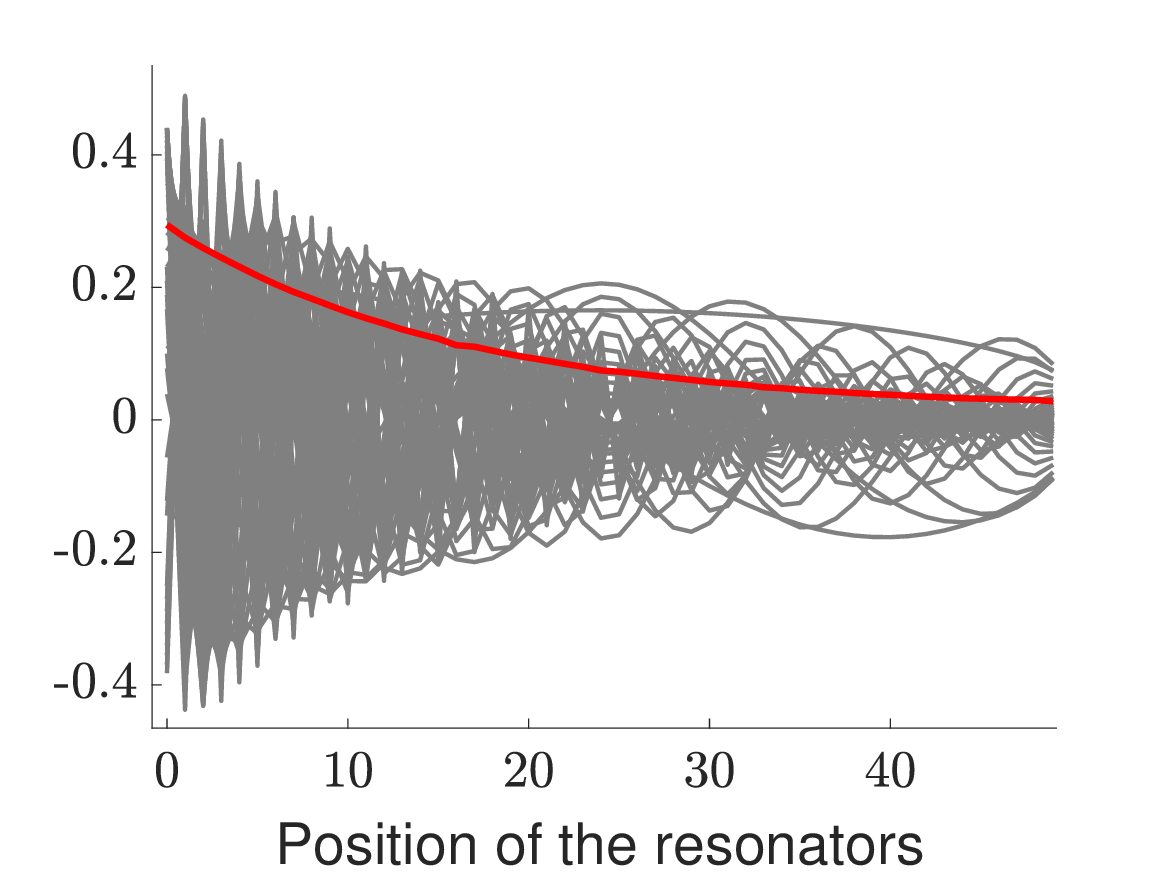}
    \caption{Eigenmodes for $\gamma = 0.5$.}
    \end{subfigure}\hfill
    \begin{subfigure}[h]{0.33\textwidth}
    \includegraphics[width=\textwidth]{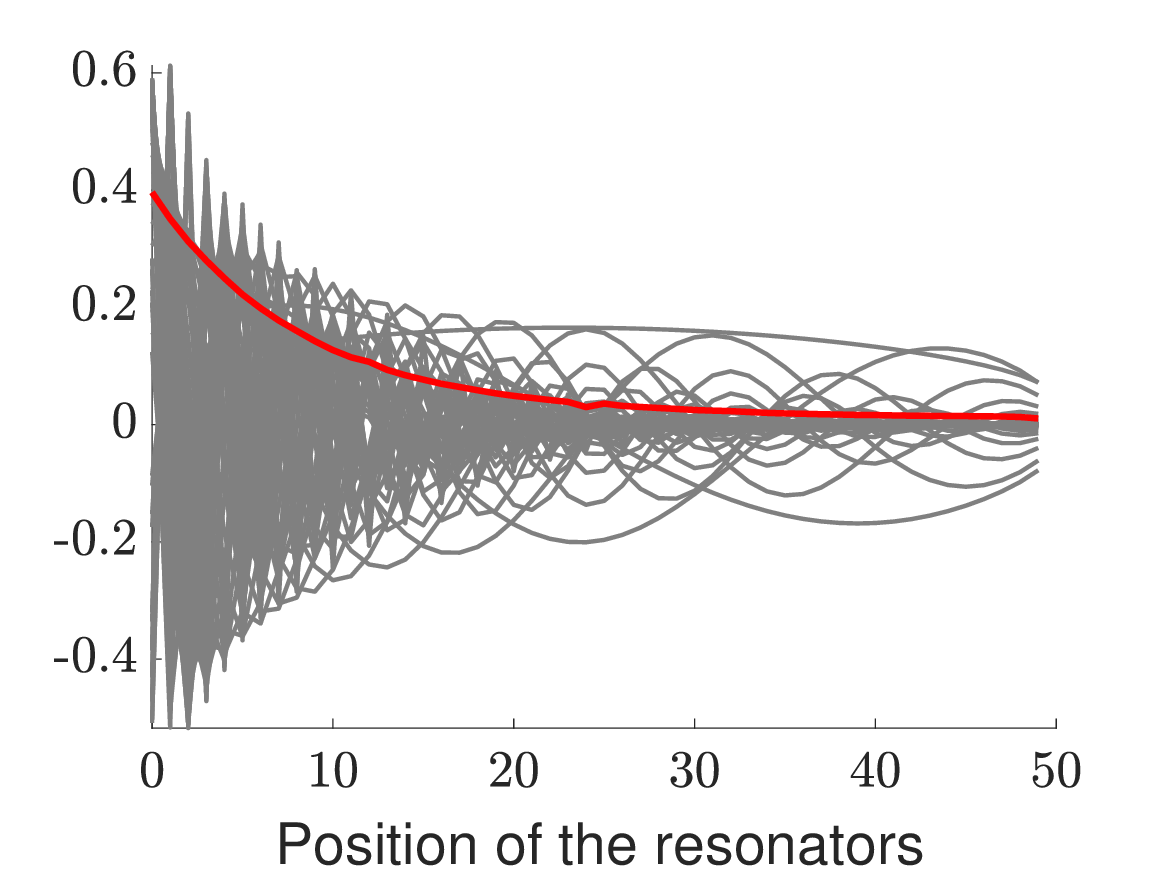}    
    \caption{Eigenmodes for $\gamma = 1$.}
    \end{subfigure}\hfill
    \begin{subfigure}[h]{0.33\textwidth}
    \includegraphics[width=\textwidth]{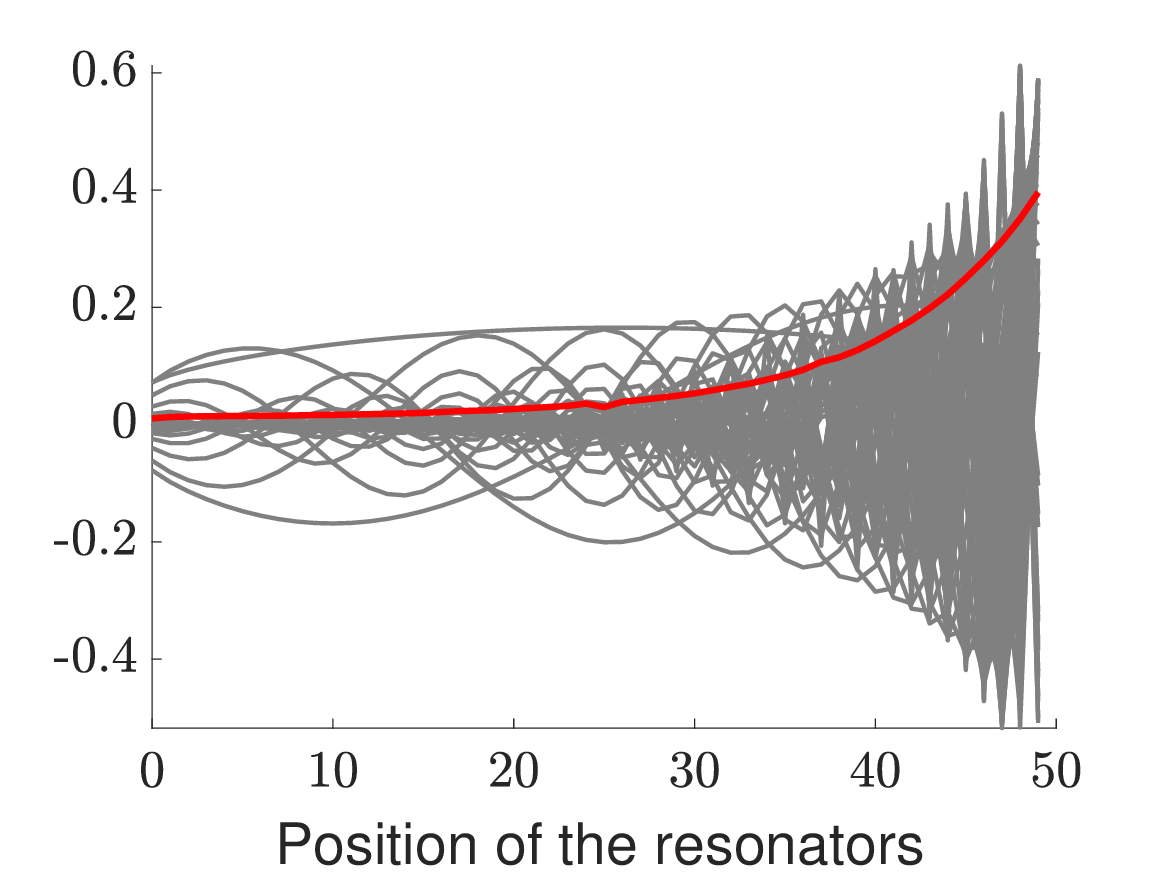}
    \caption{Eigenmodes for $\gamma = -1$.}
    \end{subfigure}
    \caption{We plot all the eigenmodes for different values of $\gamma$. The red line symbolises the average of the absolute value of the amplitudes. \label{fig:chain_gammas}}
\end{figure}

    We say that an eigenmode is condensated if the ratio between the norm of its restriction to the first $20\%$ resonators and its entire norm is greater than $80\%$. We then count the number of condensated eigenmodes and compute the ratio to $N$ (the total number of eigenmodes). In Figure \ref{fig:condensated}, we plot the portion of condensanted eigenmodes when increasing $\gamma$ or $N$.
    \begin{figure}[!h]
    \centering
    \begin{subfigure}[h]{0.48\textwidth}
    \includegraphics[width=\textwidth]{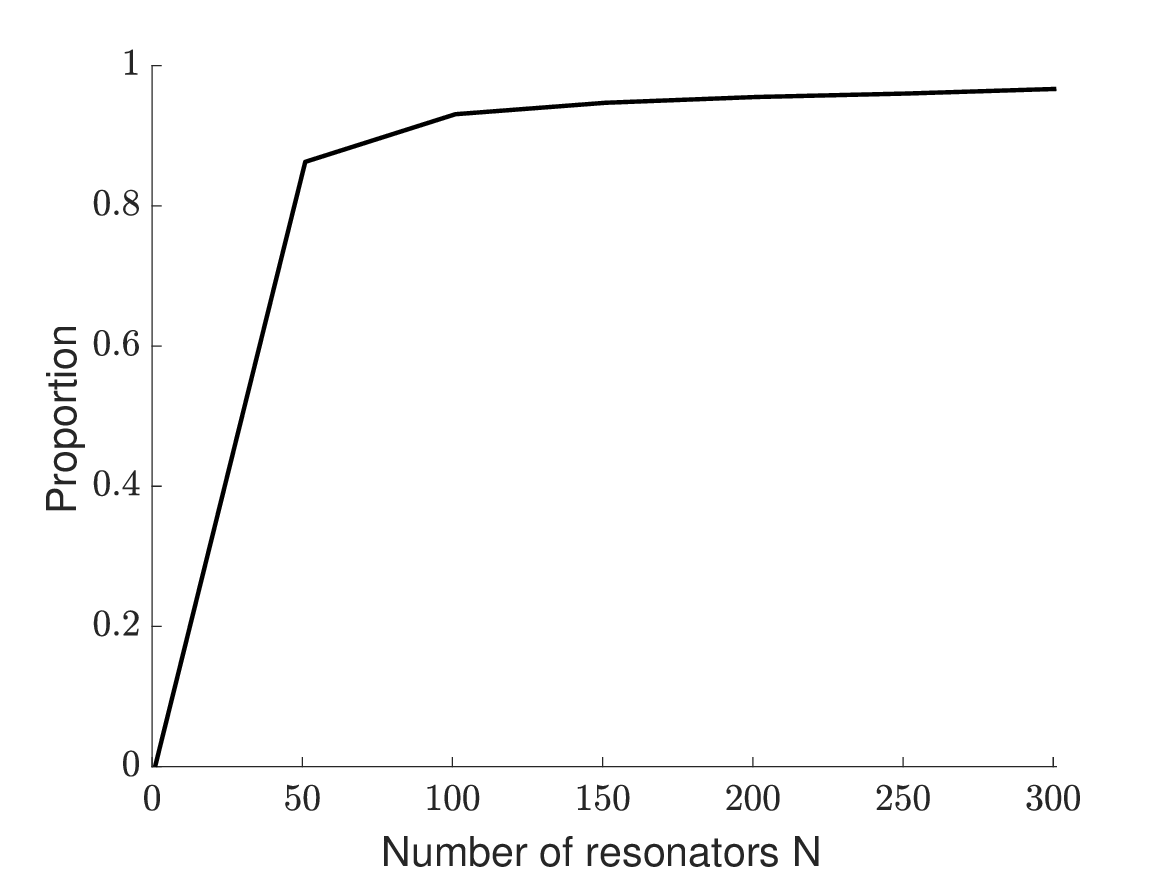}
    \caption{Proportion of condensated eigenmodes with $\gamma=1$ and 
    increasing $N$.}
    \end{subfigure}
    \begin{subfigure}[h]{0.48\textwidth}
    \includegraphics[width=\textwidth]{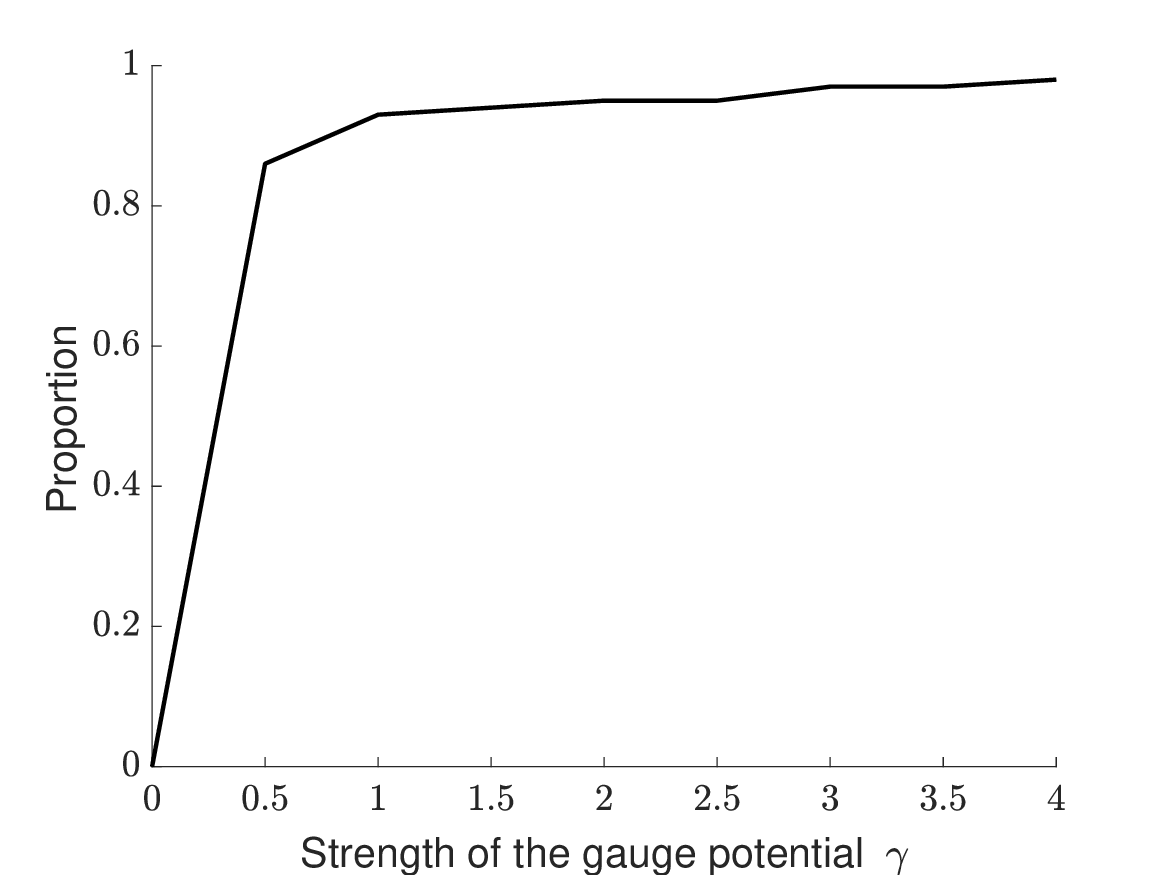}
    \caption{Proportion of condensated eigenmodes with $N=100$ and increasing $\gamma$.}
    \end{subfigure}
    \caption{Proportions of condensated eigenmodes.}
    \label{fig:condensated}
\end{figure}

To further quantify the non-Hermitian skin effect, we introduce the concept of degree of condensation.
 \begin{definition}
 Consider a chain of $N$ spherical resonators along the $x_1$-direction with centers at distinct $x_1$-coordinates  
 $\{{x_1}^i\}_{i=1,\ldots,n}$. For every eigenmode $v$, we define the degree of condensation as the vector $d_v\in\mathbb{R}^{N}$, where
    \begin{equation}
        (d_v)_i = \frac{\lVert{v|_{\{x\leq x_1^i\}}}\rVert_2}{\lVert{v}\rVert_2} \text{\ \ for $i=1,\ldots,N$}.
        \end{equation}
Here, the vector $v|_{\{x\leq x_1^i\}}$ has the same  first $i$ entries as $v$ but all the others are set to $0$. 
\end{definition}

    We now consider in Figure \ref{stability} the stability of the non-Hermitian skin effect in terms of the  positions of the resonators. Define the uniformly distributed random variables $\varepsilon_i \sim \mathcal{U}_{[-\varepsilon,\varepsilon]}$ for $i=1,\ldots,N$. We perturb the center of the resonators $x_1^i$ to $x_1^i(1+\varepsilon_i)$ and repeat the experiment $100$ times while fixing $\gamma = 1$. For each perturbation, we compute the average degrees of condensation. The stability result is similar to the one in the one-dimensional case \cite{ammari2023mathematical}.
\begin{figure}[H]
    \centering
    \begin{subfigure}[h]{0.48\textwidth}
    \includegraphics[width=\textwidth]{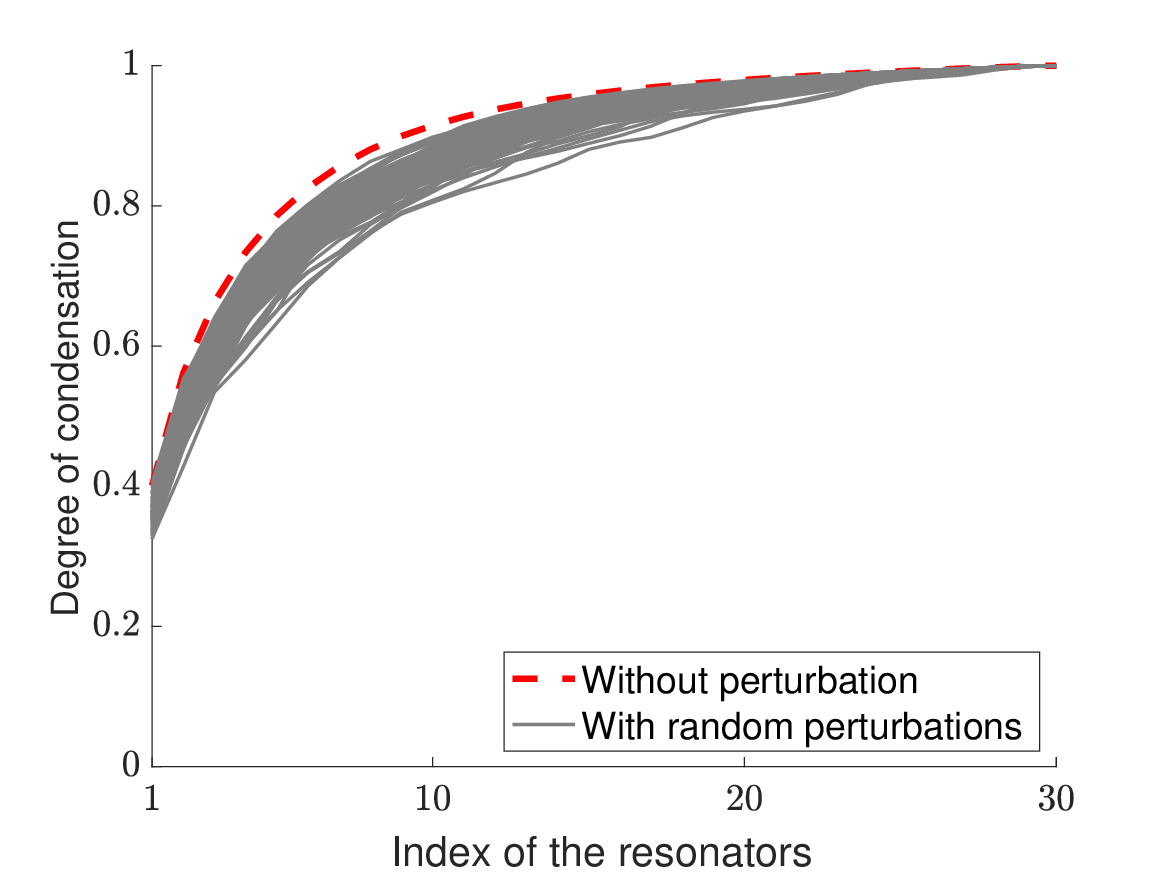}
    \caption{Average degrees of condensation with $\epsilon=0.1$, computed over $100$ realisations.}
    \end{subfigure}
    \begin{subfigure}[h]{0.48\textwidth}
    \includegraphics[width=\textwidth]{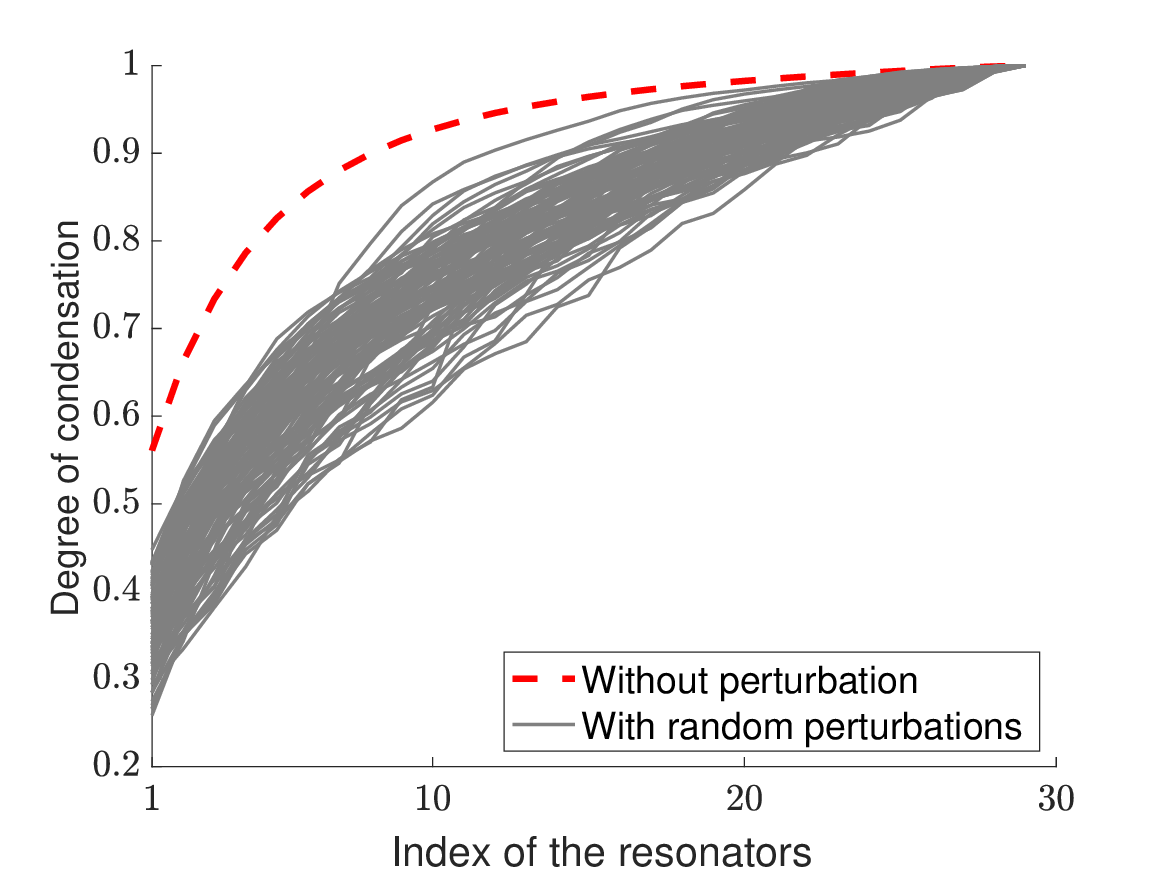}
    \caption{Average degrees of condensation with $\epsilon=0.2$, computed over $100$ realisations.}
    \end{subfigure}
    \caption{Average degrees of condensation for different strengths of geometric disorder. Grey lines are the randomly perturbed modes while the red line represents the unperturbed case. \label{stability}}
\end{figure} 

    Next, we consider in Figure \ref{stability2} the stability of the non-Hermitian skin effect in terms of $\gamma$. Define the uniformly distributed random variables $\varepsilon_i \sim \mathcal{U}_{[-\varepsilon,\varepsilon]}$ for $i=1,\ldots,N$. We perturb $\gamma$ to $\gamma (1+\varepsilon_i)$ in the $i$th resonator and compute the average degrees of condensation over $100$ runs while fixing the equidistant resonator structure. Again, the stability result is similar to the one in the one-dimensional case.
\begin{figure}[H]
    \centering
    \begin{subfigure}[h]{0.48\textwidth}
    \includegraphics[width=\textwidth]{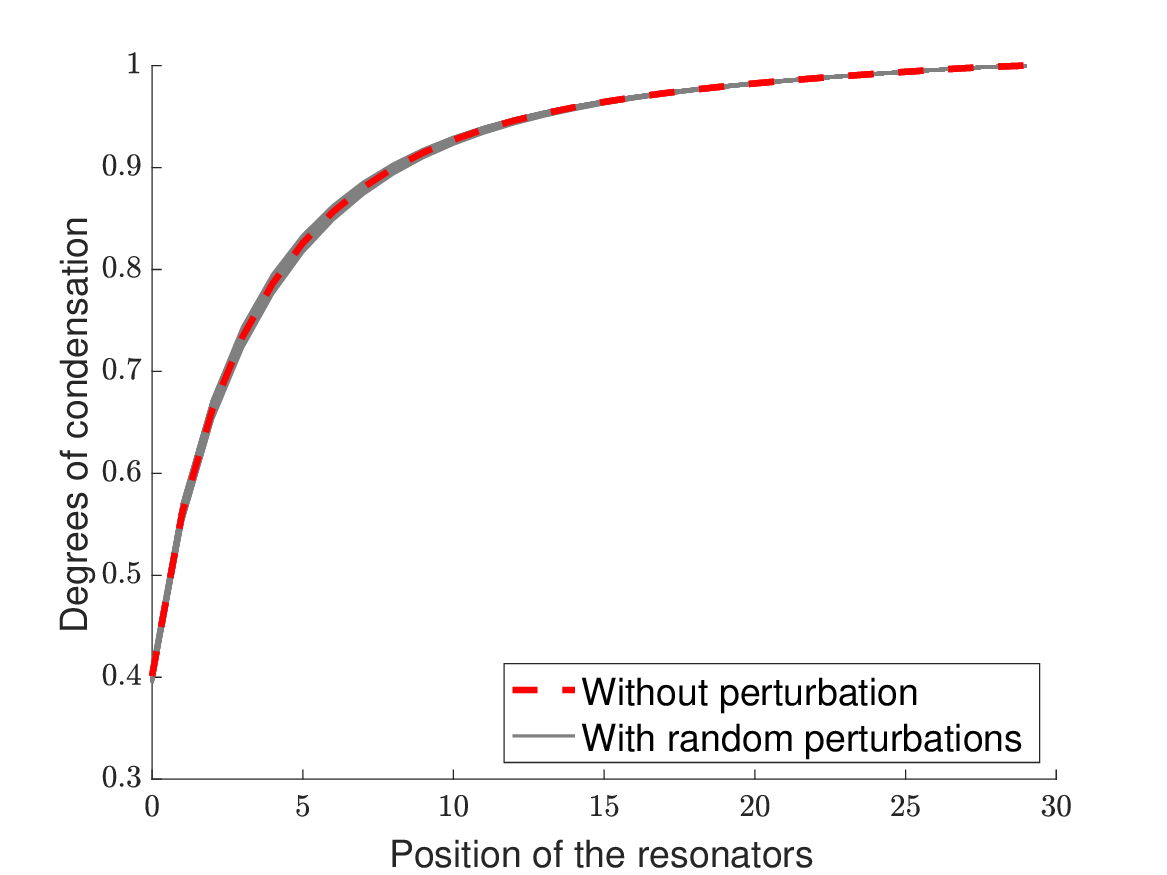}
    \caption{Average degree of condensation with $\epsilon=0.1$, computed over $100$ random realisations.}
    \end{subfigure}
    \begin{subfigure}[h]{0.48\textwidth}
    \includegraphics[width=\textwidth]{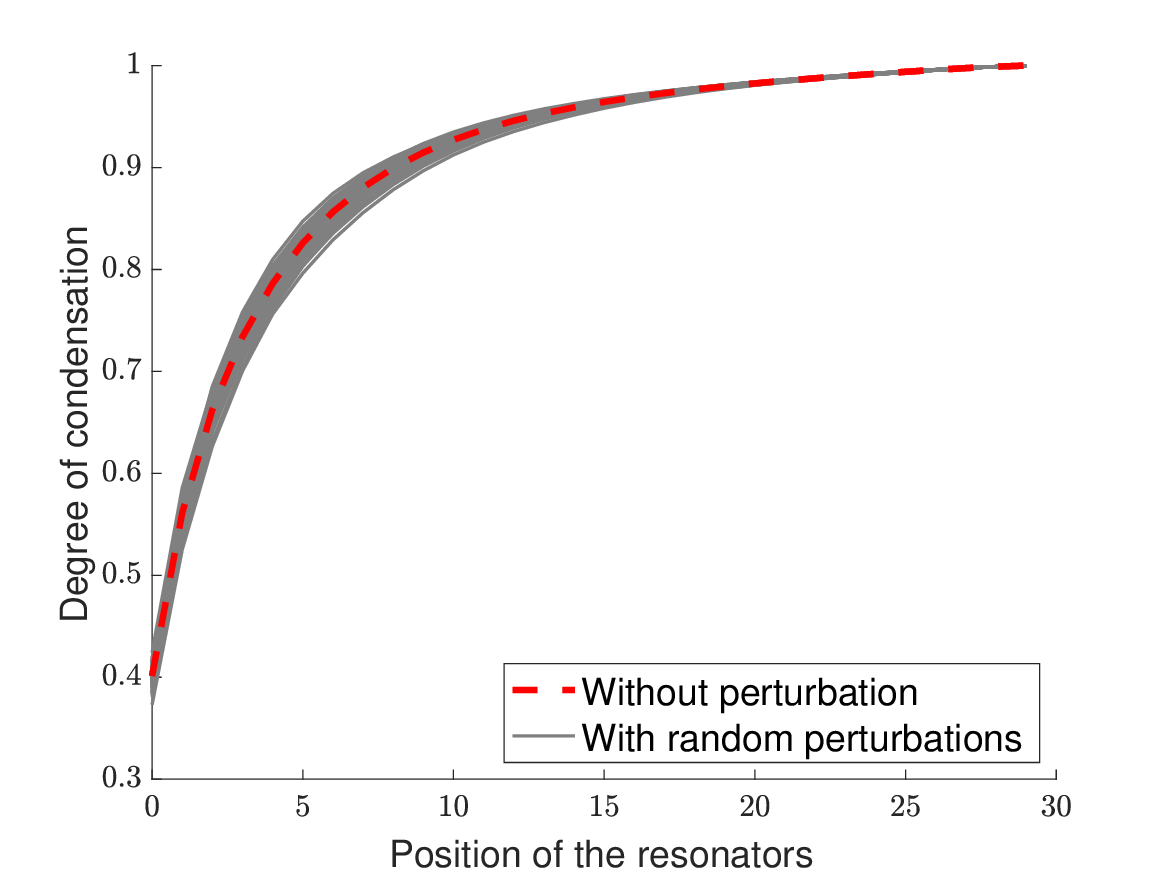}
    \caption{Average degree of condensation with $\epsilon=0.2$, computed over $100$ realisations.}
    \end{subfigure}
    \caption{Average degree of condensation for different strengths of disorder in $\gamma$. Grey lines are the randomly perturbed modes while the red line represents the unperturbed case. \label{stability2}}
\end{figure} 

\subsection{Numerical simulations of the non-Hermitian skin effect in other three-dimensional structures}
Until now, we have only considered the non-Hermitian skin effect in  chains of subwavelength resonators (i.e., structures with a one-dimensional lattice). In this section, we numerically demonstrate that the non-Hermitian skin effect also occurs in structures with higher dimension of the lattice. Without loss of generality, we can always, after translations and rotations, assume that the factor $\gamma$ aligns with the $x_1$-axis. Hence, the mathematical model \eqref{helmholtz} still holds. 

    In \Cref{fig:rect}, we consider a rectangle structure with two chains of resonators. We observe that $90\%$ of the eigenmodes are localised in the first $10$ resonators.
    \begin{figure}[!h]
    \centering
    \begin{subfigure}[h]{0.45\textwidth}
        \includegraphics[width=\textwidth]{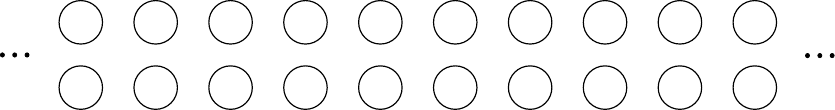}
        \caption{A rectangle structure with two lines and $100$ resonators in each line.}
        \label{fig:rectangle}
    \end{subfigure}
    \begin{subfigure}[h]{0.45\textwidth}
        \includegraphics[width=\textwidth]{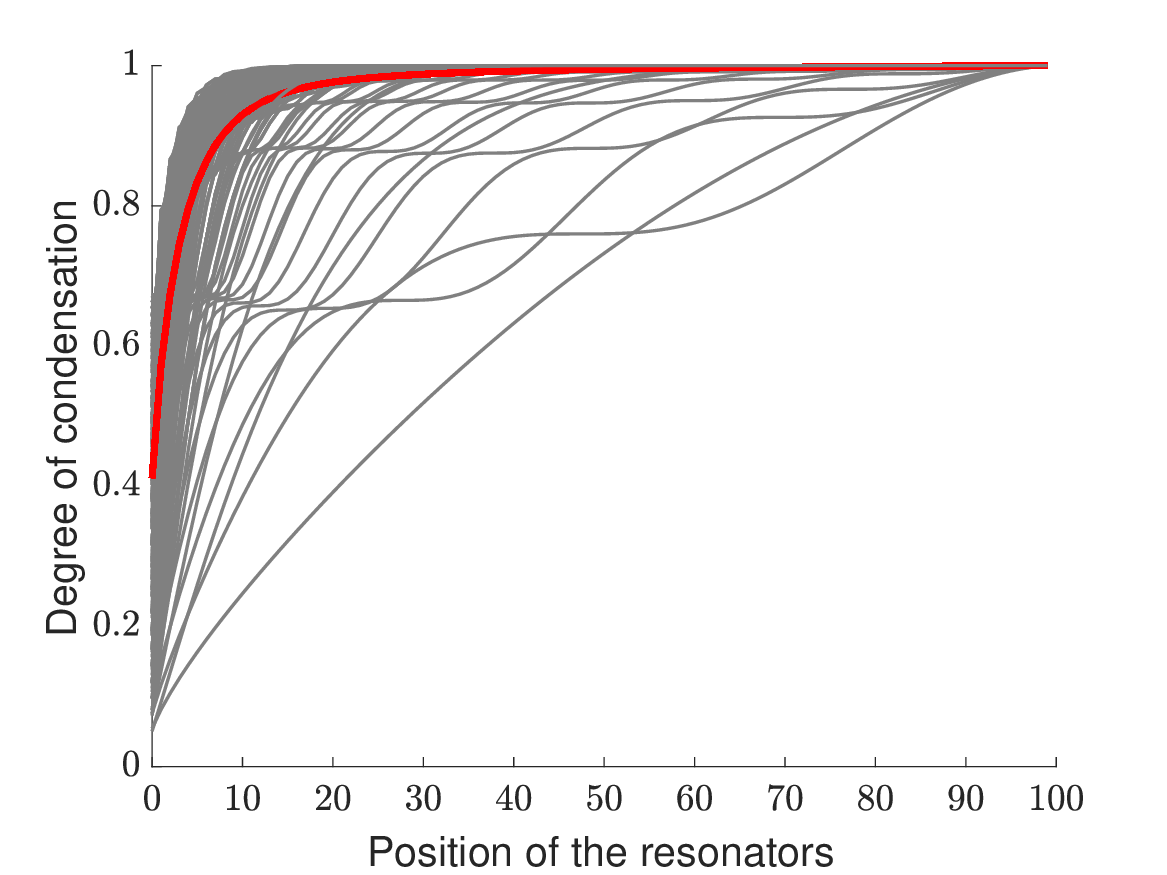}
        \caption{Degrees of condensation for all the eigenmodes of the rectangle for $\gamma =1$. The red line represents the average amplitude.}
        \label{fig:deg_rect}
    \end{subfigure}    
    \caption{Numerical simulations of the non-Hermitian skin effect  in a double chain system of resonators.}\label{fig:rect}
    \end{figure}

    Finally, in \Cref{fig:rhombus} we consider a rhombus structure with multiple lines. We observe that $90\%$ of the eigenmodes are localised on the left $20\%$ of the structure. The sudden jump of the degrees of condensation is due to the edge effects in each line.
 \begin{figure}[!h]
    \centering
    \begin{subfigure}[h]{0.45\textwidth}
        \includegraphics[width=\textwidth]{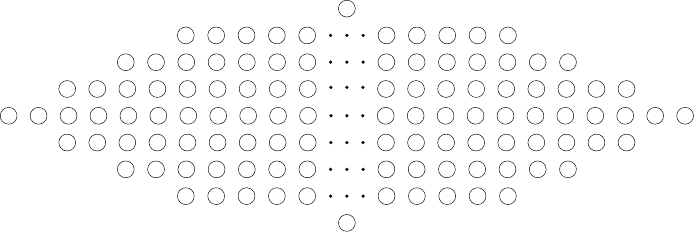}
        \caption{A rhombus structure with nine lines.}
        \label{fig:rectangle2}
    \end{subfigure}
    \begin{subfigure}[!h]{0.45\textwidth}
        \includegraphics[width=\textwidth]{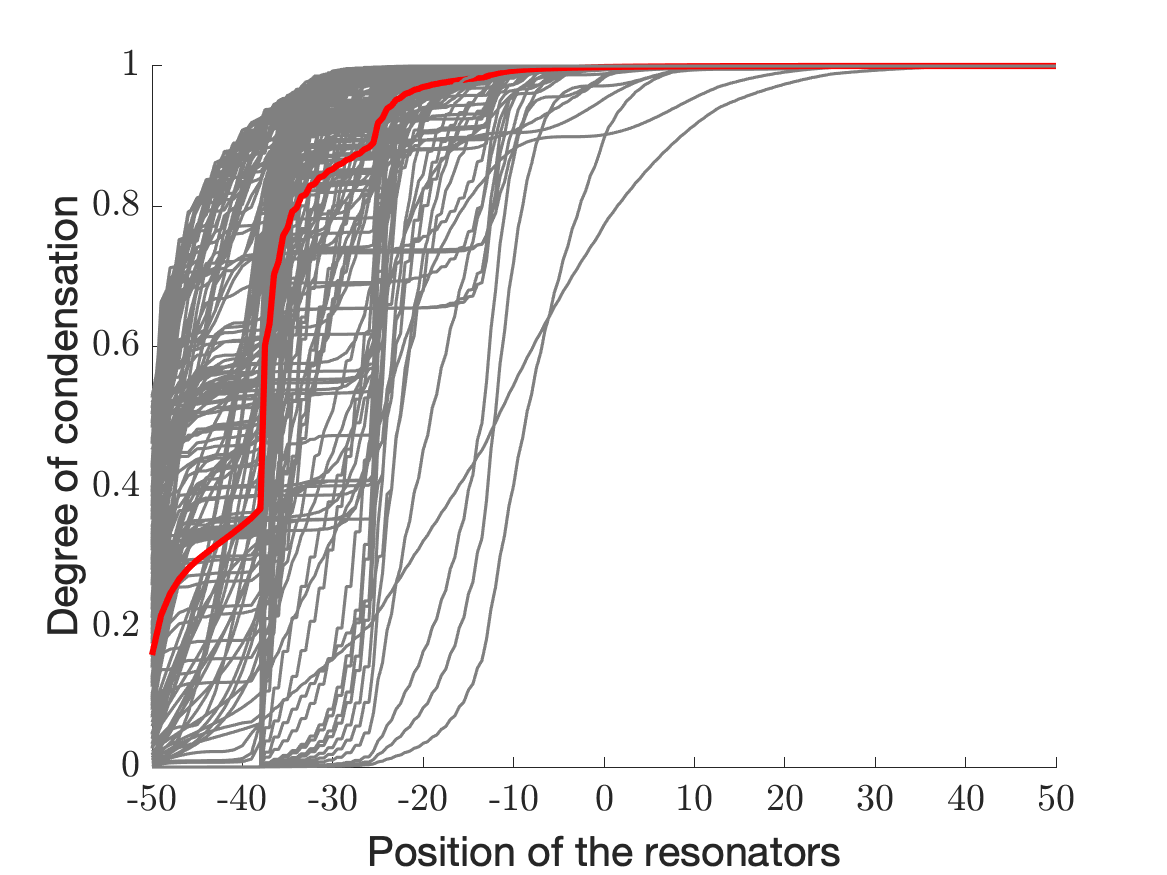}
        \caption{Degrees of condensation for all the eigenmodes of the rectangle structure for $\gamma =2$. The red line represents the average amplitude.}
        \label{fig:deg_rect2}
    \end{subfigure}
    \caption{Numerical simulations of the non-Hermitian skin effect  in  a rhombus of resonators.} \label{fig:rhombus}
    \end{figure}

\section{Concluding remarks}
In this paper, we have first introduced a discrete formulation for computing the subwavelength eigenfrequencies and eigenmodes of a system of subwavelength resonators with an imaginary gauge potential supported inside the resonators. This approximation is based on the so-called gauge capacitance matrix $\mathcal C_{N}^{\gamma}$. Unlike the one-dimensional case, due to long-range interactions in the system, the gauge capacitance matrix is dense. We have considered a range-$k$ approximation to keep the long-range interactions to a certain extent, thus obtaining a $k$-banded gauge capacitance matrix $\mathcal C_{N,k}^{\gamma}$. By proving exponential decay of the pseudo-vectors of the matrix through Toeplitz matrix theory, we have deduced the condensation of the eigenmodes at one edge of the structure. We have illustrated this non-Hermitian skin effect in a variety of examples and illustrated its stability with respect to imperfections in the system. Our results give mathematical foundations of the skin effect in non-Hermitian systems with long-range coupling in three dimensions.

A challenging further problem is to generalise these results to 
 dimer structures as we did in the one-dimensional case in \cite{dimerSkin}. Another challenging problem is to prove that when the strength of the disorder increases, there is a competition between the non-Hermitian skin effect and Anderson localisation in the bulk as
 recently shown numerically in the one-dimensional case in \cite{SkinStability}. In connection with this, it would be important to prove that all the eigenvalues of the gauge capacitance matrix are real (as illustrated in Figures \ref{fig:symbol1} and \ref{fig:symbolm1}) and random perturbations of the positions of the resonators or/and the parameter $\gamma$ inside the resonators leave them on the real axis but make some of them jump outside the region with non-trivial winding numbers.\\
 
\vspace{1cm}
\noindent
\textbf{Data Availability}

The data that support the findings of this study are openly available at \url{https://github.com/jinghaocao/skin_effect}.

\vspace{0.5cm}
\noindent
\textbf{Acknowledgments}

 This work was supported by Swiss National Science Foundation grant number 200021--200307 and by the Engineering and Physical Sciences
Research Council (EPSRC) under grant number EP/X027422/1.

\appendix
\section{Spectra and pseudo-spectra of Toeplitz matrices and operators}
In this section, we first recall some basic results on the spectra and pseudo-spectra of  Toeplitz matrices, Toeplitz operators, and Laurent operators. Then we characterise the pseudo-spectra of  banded Toeplitz matrices with perturbations. This characterisation is useful for studying the gauge capacitance matrix defined in \eqref{capacitancedef}.

\subsection{Spectra and pseudo-spectra of Toeplitz matrices}
An $N \times N$ Toeplitz matrix is a matrix whose entries are constant along diagonals:
\begin{equation} \label{eqA}
\mathbf{A}_N=\left(\begin{array}{ccccc}
a_0 & a_{1} & & \cdots & a_{N-1} \\
a_{-1} & a_0 & & & \vdots \\
& \ddots & \ddots & \ddots & \\
\vdots & & & a_0 & a_{1} \\
a_{-(N-1)} & \cdots & & a_{-1} & a_0
\end{array}\right) .
\end{equation}
A semi-infinite matrix of the same form is known as a Toeplitz operator, and a doubly infinite matrix of this kind is a Laurent operator and we denote them by $\vect A$. The symbol of a Toeplitz matrix, Toeplitz operator, or Laurent operator is the function
$$
f(z)=\sum_{k} a_k z^k
$$
with $\sum_k \babs{a_k}< +\infty$. This symbol is important for deriving many properties of the Toeplitz matrices, Toeplitz operators, and Laurent operators. For example, it fully determines the spectrum $\Lambda$ of a Toeplitz or a Laurent operator $\vect A$. Define $\mathbb T:= \{z\in \mathbb C: \babs{z}=1\}$ and $I(f(\mathbb T), \lambda)$ as the winding number of $f(\mathbb T)$ about $\lambda$ in the usual positive (counterclockwise) sense. We have the following result. 
\begin{thm} \label{thmapp1}
Let $\vect A$ and $f$ be as described above. The following holds:
\begin{enumerate}[(i)]
\item If $\vect A$ is a Laurent operator, then $\Lambda(\vect A)=f(\mathbb T)$;
\item If $\vect A$ is a Toeplitz operator, then 
$\Lambda(\vect A)=f(\mathbb T) \cup\{\lambda \in \mathbb{C}: I(f(\mathbb T), \lambda) \neq 0\}$.
\end{enumerate}
\end{thm}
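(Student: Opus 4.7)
The plan is to treat the two parts separately, since they require different machinery.

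For part (i), the Laurent operator case, the idea is to identify $\ell^2(\mathbb Z)$ unitarily with $L^2(\mathbb T)$ via the Fourier series map $U: (x_n)_{n \in \mathbb Z} \mapsto \sum_n x_n z^n$. A direct computation using the convolution structure of $\vect A$ shows that $U \vect A U^{-1}$ is the multiplication operator $M_f$ on $L^2(\mathbb T)$. Since the hypothesis $\sum_k |a_k| < \infty$ makes $f$ continuous on the compact set $\mathbb T$, the essential range of $f$ coincides with the range $f(\mathbb T)$, and combined with the standard fact that the spectrum of a multiplication operator on $L^2$ equals the essential range of the multiplier, this gives $\Lambda(\vect A) = f(\mathbb T)$.

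For part (ii), the Toeplitz case, I would realise $T_f$ as $P M_f |_{H^2}$ on the Hardy space $H^2 \subset L^2(\mathbb T)$, where $P$ denotes the Riesz projection. Then $\lambda \in \Lambda(\vect A)$ if and only if $T_{f-\lambda}$ fails to be invertible on $H^2$. The classical tools I would invoke are the Gohberg--Krein theorem, asserting that for continuous $g$ the operator $T_g$ is Fredholm if and only if $g$ does not vanish on $\mathbb T$, with $\mathrm{ind}(T_g) = -I(g(\mathbb T),0)$ in that case, and Coburn's lemma, asserting that for a nonzero continuous symbol $g$ at least one of $\ker T_g$ or $\ker T_g^*$ is trivial. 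Setting $g = f-\lambda$ and distinguishing three cases then yields the formula: if $\lambda \in f(\mathbb T)$ then $g$ vanishes on $\mathbb T$, so $T_g$ is not Fredholm and in particular not invertible; if $\lambda \notin f(\mathbb T)$ and $I(f(\mathbb T), \lambda) \neq 0$ then $T_g$ is Fredholm with nonzero index, so non-invertible; and if $\lambda \notin f(\mathbb T)$ and $I(f(\mathbb T), \lambda) = 0$ then Coburn's lemma combined with the vanishing of $\dim \ker T_g - \dim \ker T_g^*$ forces both kernel and cokernel to be trivial, so $T_g$ is invertible. Taking unions over all $\lambda$ gives the claimed spectrum.

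The main obstacle is establishing the Fredholmness-and-index statement. A clean route is to first verify it for the monomials $f(z) = z^n$, where $T_f$ is (a power of) the unilateral shift with explicitly computable kernel and cokernel giving index $-n$. For a general non-vanishing continuous $g$ of winding number $n$, I would invoke a Wiener-type factorisation $g = g_- \cdot z^n \cdot g_+$ in which $g_\pm$ are non-vanishing and admit analytic, respectively anti-analytic, non-vanishing extensions; the corresponding Toeplitz operators $T_{g_\pm}$ are then invertible via Neumann series on $H^2$, and multiplicativity of the Fredholm index on this product reduces the computation to the monomial case. An alternative route is to exploit homotopy invariance of the Fredholm index in the space of non-vanishing continuous symbols, which again collapses the general case to $z^n$. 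Coburn's lemma itself is a short auxiliary argument based on the antilinear conjugation $F(z) \mapsto \bar{z}\,\overline{F(z)}$ on $H^2$, which exchanges $T_g$ and $T_g^*$ up to the role of analytic/anti-analytic parts.
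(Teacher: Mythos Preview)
Your proof outline is correct and follows the standard route (Fourier diagonalisation for the Laurent case; Gohberg--Krein together with Coburn's lemma for the Toeplitz case). Note, however, that the paper does not actually prove this theorem: it is stated in the appendix as a recalled classical fact, introduced with ``we first recall some basic results on the spectra and pseudo-spectra of Toeplitz matrices, Toeplitz operators, and Laurent operators,'' and no proof is given. So there is nothing in the paper to compare your argument against; what you have written is essentially the textbook proof (see, e.g., B\"ottcher--Silbermann or Douglas), and it would serve perfectly well as a self-contained justification were one required.
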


Note that the winding number $I(f(\mathbb T), \lambda)$ or index of the continuous curve $f(\mathbb T)$ about the point $\lambda \in \mathbb{C}$ is also given by
$$
I(f(\mathbb T), \lambda)=\frac{1}{2 \pi i} \int_{\mathbb T} \frac{f^{\prime}(z)}{f(z)-\lambda} d z, \quad \lambda \notin f(\mathbb T) .
$$
Consequently, if $\lambda \in f(\mathbb T)$, then $I(f(\mathbb T), \lambda)$ is undefined.

A specific property of the Toeplitz operator $\vect A$ is that, for the eigenvectors of $\vect A$ corresponding to the eigenvalue $\lambda$ with $I(f(\mathbb T), \lambda) \neq 0$, the amplitude of its entries $a_j$'s decreases or increases as $j\rightarrow +\infty$. As stated in \cite[Eq. (3.4)]{reichel1992eigenvalues}:
\[
\begin{aligned}
\text{If } I(f(S), \lambda)>0, &\text{ then the right eigenvectors of $\vect A$ are geometrically decreasing};\\ 
\text{If } I(f(S), \lambda)<0, &\text{ then the right eigenvectors of $\vect A$ are geometrically increasing}. 
\end{aligned}
\]
This is known to be the topological origin of the skin effect in non-Hermitian physical systems with complex gauge transforms \cite{ammari2023mathematical}. 

For characterising the spectrum of Toeplitz matrices, no results similar to those in Theorem \ref{thmapp1} are available in the literature. Moreover, a detailed characterisation of general Toeplitz matrices seems to be out of reach. The only known results are asymptotic characterisations of eigenpairs of some Hermitian Toeplitz matrices and Toeplitz matrices with specific symbols; see \cite{bottcher2017asymptotics} for a survey in this regard. We also refer the reader to some representations of eigenpairs of tridiagonal Toeplitz matrices \cite{da2007characteristic, gover1994eigenproblem, yueh.cheng2008Explicit}, which led to our recent demonstration of the skin effect for finitely periodic systems of subwavelength resonators with complex gauge transformations in the one-dimensional case \cite{ammari2023mathematical, dimerSkin}. 

On the other hand, there are some results for the pseudo-spectra of banded and semi-banded Toeplitz matrices. In order to state them, we introduce the concept of pseudo-eigenvalue. 
\begin{definition}
Given $\varepsilon>0$, the number $\lambda \in \mathbb{C}$ is a pseudo-eigenvalue of a matrix $\vect A$ if any of the following equivalent conditions is satisfied:
\begin{enumerate}[(i)]
    \item $\lambda$ is an eigenvalue of $\vect A+\vect E$ for some $\vect E \in \mathbb{C}^{N \times N}$ with $\bnorm{\vect E} \leqslant \varepsilon$;
    \item $\exists \ u \in \mathbb{C}^N,\bnorm{u}=1$, such that $\bnorm{(\vect A-\lambda I) u} \leqslant \varepsilon$;
    \item $\bnorm{(\lambda I-\vect A)^{-1}} \geqslant \varepsilon^{-1}$;
    \item $\sigma_{\min}(\lambda I-\vect A) \leqslant \varepsilon$, where $\sigma_{\min}(\lambda I-\vect A)$ is the smallest singular value of 
   $\lambda I-\vect A$.
\end{enumerate}
\end{definition}

Given  $\varepsilon>0$, the set of all pseudo-eigenvalues of $\vect A$, the pseudo-spectrum, is denoted by $\Lambda_{\varepsilon}(\vect A)$ or simply $\Lambda_{\varepsilon}$.

An $l$-banded Toeplitz matrix is defined as $\vect A_{N}$ in \eqref{eqA} with $a_j=0$ for $\babs{j} > l$ and is denoted by $\vect A_{N,l}$. To state the result on its pseudo-spectrum, we need to introduce the sets $\Omega_r$ and $\Omega^R$ defined by
\begin{equation}\label{equ:defineofomegar}
{\Omega}_r=\left\{z \in \mathbb{C}: I\left(f\left(\mathbb T_r\right), z\right)>0\right\}, \quad {\Omega}^R=\left\{z \in \mathbb{C}: I\left(f\left(\mathbb T_R\right), z\right)<0\right\},
\end{equation}
where $\mathbb T_r:=\{z\in \mathbb C: \babs{z}=r\}$. The following theorem (\cite[Theorem 3.2]{reichel1992eigenvalues}) characterises the pseudo-eigenvalues of an $l$-banded Toeplitz matrix.  

\begin{thm}\label{thm:peudovectorbandedToeplitz1}
Let $\vect A$ be a banded Toeplitz operator with bandwidth $l$, i.e., $a_k=0$ for $|k|>l$. Let $f(z)$ be the symbol of $\vect A$, and let $\vect A_N$ denote the $N \times N$ Toeplitz matrix defined by (\ref{eqA}). Then, for any $r<1$ and $\rho>r$, we have
$$
\Omega_r \cup \Omega^{1 / r} \cup\left(\Lambda(\vect A)+\Delta_{\varepsilon}\right) \subseteq \Lambda_{\varepsilon}\left(\vect A_{N,l}\right) \quad \text { for } \quad \varepsilon=C \rho^N,
$$
where $\Delta_{\epsilon}=\{z\in \mathbb C: \babs{z}<\epsilon\}$ and $C$ is a constant independent of $N$. 
\end{thm}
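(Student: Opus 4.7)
The plan is to exhibit, for each $\lambda$ in the three constituent sets, an explicit unit trial vector $u \in \mathbb{C}^N$ with $\bnorm{(\vect A_{N,l}-\lambda I)u}_2 \le C\rho^N$. The key observation is that the finite banded matrix $\vect A_{N,l}$ coincides row-by-row with the infinite Laurent operator $\vect L$ associated with the symbol $f$ in every row $i$ satisfying $l+1 \le i \le N-l$. Hence, whenever $u^\infty \in \mathbb{C}^{\mathbb{Z}}$ lies in the kernel of $\vect L - \lambda I$, the residual of its restriction $u = (u^\infty_1,\ldots,u^\infty_N)^\top$ under $\vect A_{N,l} - \lambda I$ is supported solely in the first $l$ and last $l$ rows, and one can use the freedom in $u^\infty$ to cancel one of those two boundary contributions.

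The core case is $\lambda\in\Omega_r$. The argument principle applied to the polynomial $z^l(f(z)-\lambda)$ of degree at most $2l$ shows that the number of its zeros strictly inside $\mathbb{T}_r$ equals $l + I(f(\mathbb{T}_r),\lambda) \ge l+1$; let $z_1,\ldots,z_q$, $q\ge l+1$, denote them, with confluent Jordan modes $z_k^j,\,j z_k^{j-1},\ldots$ used when multiplicities occur. Every sequence $u^\infty_j = \sum_k c_k z_k^j$ is annihilated by $\vect L - \lambda I$. Imposing the $l$ homogeneous boundary conditions $u^\infty_{1-l}=\cdots=u^\infty_0 = 0$ together with the normalisation $u^\infty_1 = 1$ yields a Vandermonde-type linear system of $l+1$ equations in the $q \ge l+1$ unknowns $c_k$, which admits a solution with $\max_k|c_k|$ bounded by a constant depending only on the configuration of the roots. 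With this choice the upper boundary residual vanishes identically, while the lower boundary residual has norm bounded by $C' \max_k |z_k|^N \le C' r^N$. Since $|u_1| = 1$ forces $\bnorm{u}_2 \ge 1$ and $r < \rho$, this produces the required exponential estimate.

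The case $\lambda\in\Omega^{1/r}$ reduces to the previous one by conjugation with the reversal matrix $J_{ij} = \delta_{i,N+1-j}$: the conjugate $J\vect A_{N,l}J$ is the banded Toeplitz matrix with symbol $\tilde f(z) = f(1/z)$, and the set $\Omega^{1/r}$ for $f$ becomes $\Omega_r$ for $\tilde f$; since pseudo-spectra are invariant under unitary similarity, the previous construction produces a pseudo-eigenvector of the conjugated matrix, and pulling back through $J$ yields one for $\vect A_{N,l}$. For $\lambda\in\Lambda(\vect A) + \Delta_\varepsilon$, I would use \Cref{thmapp1} to split $\Lambda(\vect A) = f(\mathbb{T}) \cup \{\mu : I(f(\mathbb{T}),\mu)\neq 0\}$: every interior point of the winding set lies in $\Omega_{r'}\cup\Omega^{1/r'}$ for $r'$ sufficiently close to $1$ and so is captured above, and the closeness $|\lambda - \mu| \le \varepsilon$ in the triangle inequality $\bnorm{(\vect A_{N,l}-\lambda I)u}_2 \le \bnorm{(\vect A_{N,l}-\mu I)u}_2 + |\lambda - \mu|\bnorm{u}_2$ absorbs the $\Delta_\varepsilon$ thickening into the same exponential bound.

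The main obstacle is ensuring the uniform lower bound $\bnorm{u}_2 \ge c > 0$ in the principal construction when the zeros $z_k$ degenerate, either through collisions forcing confluent Jordan modes, or through zeros approaching $\mathbb{T}_r$ and driving the Vandermonde system towards ill-conditioning. My remedy is to work on a slightly smaller disc $\mathbb{T}_{r''}$ with $r < r'' < \rho$: by using only those zeros that lie strictly inside $\mathbb{T}_{r''}$ and normalising via $u^\infty_1 = 1$, the relevant constants remain uniformly bounded on compact subsets of $\Omega_{r''}$, and the slack $\rho > r$ built into the hypothesis is precisely what absorbs the polynomial-in-$N$ Vandermonde and confluence prefactors into the final exponential $\rho^N$.
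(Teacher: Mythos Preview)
The paper does not prove this theorem; it quotes it verbatim from \textsc{Reichel--Trefethen} \cite[Theorem~3.2]{reichel1992eigenvalues}. What the paper \emph{does} prove is the perturbed version, \Cref{thm:peudovectorperturbToeplitz1}, and your construction for $\Omega_r$ is essentially the same argument that appears there: use the argument principle to produce enough roots of $f(z)=\lambda$ inside $\mathbb{T}_r$, form a Vandermonde combination of the geometric modes $(z_k^j)_j$ so as to annihilate the residual at one boundary, observe that the other boundary residual carries a factor $r^{N}$, and handle confluent roots by passing to the confluent Vandermonde system. Your reduction of $\Omega^{1/r}$ to $\Omega_r$ via the reversal similarity $J\vect A_{N,l}J$ is exactly the ``by symmetry'' step the paper invokes. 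One cosmetic difference: the paper takes precisely $l$ roots and kills $l-1$ top residual conditions to get a one-dimensional kernel, then controls $\bnorm{\vect v}_2$ via the condition number $\kappa(U)$ of the Vandermonde matrix; you take $l+1$ roots and impose $l$ vanishing conditions plus the normalisation $u_1^\infty=1$. Both work, and both ultimately absorb the possible blow-up of the Vandermonde constants into the slack $\rho>r$, exactly as you describe in your final paragraph.

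Your treatment of the third set $\Lambda(\vect A)+\Delta_\varepsilon$ is the only place where the sketch is genuinely thin. Points $\mu$ in the interior winding set are indeed captured by $\Omega_{r'}\cup\Omega^{1/r'}$ for $r'$ near $1$, but the constant $C$ you obtain that way depends on $r'$ and hence on $\mu$, so uniformity over all of $\Lambda(\vect A)$ does not follow from the triangle-inequality perturbation alone; and for $\mu\in f(\mathbb{T})$ you have not said which nearby interior point you perturb to or why the associated constant stays bounded. In the Reichel--Trefethen original this inclusion is obtained by a separate resolvent-norm argument rather than by reducing to the $\Omega_r$ construction. Since the paper itself neither proves nor uses this third inclusion (only $\Omega_r\cup\Omega^{1/r}$ enters \Cref{thm:peudovectorperturbToeplitz1} and the skin-effect analysis), this gap is harmless for the paper's purposes, but you should be aware that it would need a different argument to close.
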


\subsection{Pseudo-spectra of banded Toeplitz matrices with perturbations}
Note that our gauge capacitance matrix is a Toeplitz matrix with perturbations on the non-zero elements. Here, we consider and analyse the pseudo-spectrum of an $l$-banded Toeplitz matrix with perturbations on the corner blocks. More precisely, we define the $l$-banded Toeplitz matrix with the first and last $l$ rows perturbed as follows:
\begin{equation}\label{equ:perturbedtoeplitzmatrix1}
\widehat {\mathbf A}_{N, l}= \begin{pmatrix}
B_1\\
B_2\\
B_3
\end{pmatrix},
\end{equation}
where $B_1, B_2,$ and $B_3$ are respectively of size $(l-1)\times N$,  $(N-2l+2)\times N$, and  $(l-1)\times N$ and are given by 
\[
\begin{pmatrix}
a_0+b_{0,1} & \cdots & a_{l-1}+b_{l-1,1}& 0& \cdots &  \cdots &  \cdots &  \cdots  \\
a_{-1}+b_{-1,2} & \cdots & a_{l-2}+b_{l-2,2} & a_{l-1}+b_{l-1,2}& 0& \ddots &\ddots &\vdots \\
\vdots  & \ddots & \ddots & \ddots & \ddots &\ddots&\ddots &\vdots\\
a_{2-l}+b_{2-l,l-1} & \cdots & a_0+b_{0,l-1} & a_{1}+b_{1,l-1}&\cdots &  a_{l-1}+b_{l-1,l-1} &0 &\cdots
\end{pmatrix},
\]
\[
\begin{pmatrix}
a_{-l+1} & a_{-l+2} & \cdots & a_{l-1}& 0& \cdots &0  \\
 0 &a_{-l+1} & a_{-l+2} & \cdots & a_{l-1}& 0 &\vdots \\
 \vdots & \ddots & \ddots & \ddots & \ddots &\ddots&\vdots \\
 0& \cdots&0 &a_{-l+1} &a_{-l+2} & \cdots  &  a_{l-1}
\end{pmatrix},
\]
and 
\[
\begin{pmatrix}
0&\cdots &0&a_{1-l}+b_{1-l, N-l+2}  & \cdots & \cdots &\cdots & a_{l-3}+b_{l-3, N-l+2} & a_{l-2}+b_{l-2, N-l+2} \\
0&\cdots &0 & 0 &\ddots & \ddots & \ddots& \cdots &a_{l-3}++b_{l-3, N-l+3} \\
\vdots&\ddots &\vdots & \ddots & \ddots & \ddots & \ddots & \ddots  &\vdots\\
0&\cdots & 0 & 0& \cdots&0 &a_{1-l}+b_{1-l, N} & \cdots  &  a_{0}+b_{0, N}
\end{pmatrix}.
\]
We generalise Theorem \ref{thm:peudovectorbandedToeplitz1} to the perturbed $l$-banded Toeplitz $\widehat {\mathbf A}_{N,l}$ as follows. 
\begin{thm}\label{thm:peudovectorperturbToeplitz1}
Let $f(z)$ be the symbol defined by $\sum_{q=1-l}^{l-1}a_q z^{q}$, and let $\widehat{\mathbf A}_{N,l}$ be the $N \times N$ $l$-banded perturbed Toeplitz matrix in (\ref{equ:perturbedtoeplitzmatrix1}). Then, for any $r<1$ and $\rho>r$, we have
\begin{equation}\label{equ:peudovectorperturbToeplitz1}
\Omega_r \cup \Omega^{1 / r} \subseteq \Lambda_{\varepsilon}\left(\widehat {\mathbf A}_{N,l}\right) \quad \text { for } \quad \varepsilon=\max\left(C_1, C_2 N^{l-1}\right) \rho^{N-2l+2},
\end{equation}
where $\Omega_r, \Omega^{\frac{1}{r}}$ are defined as in (\ref{equ:defineofomegar}) and $C_1, C_2$ are constants independent of $N$. Moreover, there exist non-zero pseudo-eigenvectors $\mathbf{v}^{(N)}$ satisfying
\begin{equation}\label{equ:pseudoeigenvectorequ-2}
\frac{\bnorm{\left(\widehat {\mathbf{A}}_{N,l}-\lambda I\right) \mathbf{v}^{(N)}}_2}{\bnorm{\mathbf{v}^{(N)}}_2} \leq \max\left(C_1, C_2 N^{l-1}\right) \rho^{N-2l+2},
\end{equation}
such that
\begin{equation}\label{equ:pseudoeigenvectorequ-1}
\frac{\left|\left(\vect v^{(N)}\right)_j\right|}{\max_{j}\babs{\left(\vect v^{(N)}\right)_j}} \leq\left\{\begin{array}{ll}
C_3\rho^{j-1}, & \text { if } \lambda \in \Omega_r, \\
C_3\rho^{N-j}, & \text { if } \lambda \in \Omega^{\frac{1}{r}},
\end{array} \quad 1 \leq j \leq N, \right.
\end{equation}
where $C_3$ is independent of $N$. 
\end{thm}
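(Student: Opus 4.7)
The plan is to extend the Reichel--Trefethen construction from \Cref{thm:peudovectorbandedToeplitz1} to the perturbed matrix $\widehat{\mathbf A}_{N,l}$ by exploiting the fact that the perturbation $\mathbf E := \widehat{\mathbf A}_{N,l} - \mathbf A_{N,l}$ is supported only on the first $l-1$ and last $l-1$ rows. In particular, the $N-2(l-1)$ interior rows of $\widehat{\mathbf A}_{N,l} - \lambda I$ coincide with those of the pure Toeplitz matrix, so any ansatz based on exact solutions of the symbol equation $f(z)=\lambda$ automatically yields zero residual on these rows, irrespective of whether we work with $\mathbf A_{N,l}$ or $\widehat{\mathbf A}_{N,l}$.

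I would treat the case $\lambda\in\Omega_r$ in detail; the case $\lambda\in\Omega^{1/r}$ is analogous by reading the chain from the opposite end. By the argument principle applied to $z\mapsto z^{l-1}(f(z)-\lambda)$, the assumption $I(f(\mathbb T_r),\lambda)>0$ forces this polynomial of degree $2l-2$ to have at least $l-1+I$ roots $z_1,\dots,z_{l-1+I}$ in the open disk $\{|z|<r\}$, counted with multiplicity, where $I:=I(f(\mathbb T_r),\lambda)\geq 1$. I would then propose the ansatz $v_j = \sum_k c_k z_k^{j-1}$, supplemented in the case of repeated roots by generalised modes of the form $j^m z_k^{j-1-m}$, yielding $l-1+I$ linearly independent degrees of freedom. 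A direct calculation shows that for interior rows $l\leq i\leq N-l+1$ one has $\bigl((\mathbf A_{N,l}-\lambda I)v\bigr)_i = \sum_k c_k z_k^{i-l}\bigl(z_k^{l-1}(f(z_k)-\lambda)\bigr)=0$, so the interior residual vanishes exactly.

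The remaining $l-1+I$ free parameters $(c_k)$ would then be used to kill the top $l-1$ residuals of the full perturbed matrix $\widehat{\mathbf A}_{N,l}-\lambda I$. This constitutes a homogeneous linear system of $l-1$ equations in $l-1+I$ unknowns, whose kernel has dimension at least $I\geq 1$; I choose any non-trivial element and normalise so that $\max_j|v_j|=1$, which ensures $\|v\|_2\geq 1$. The bottom $l-1$ residuals of $\widehat{\mathbf A}_{N,l}-\lambda I$ couple only to $v_j$ with $j\geq N-l+2$, and from the ansatz $|v_j|\leq C\rho^{j-1}$ (possibly multiplied by a polynomial in $j$ in the repeated-root case), so this residual is bounded by $\max(C_1,C_2N^{l-1})\rho^{N-2l+2}$, yielding \eqref{equ:pseudoeigenvectorequ-2}. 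The geometric decay estimate \eqref{equ:pseudoeigenvectorequ-1} follows directly from the form of the ansatz.

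The main obstacle I anticipate is the bookkeeping when the roots $z_k$ coalesce or cluster, which requires switching to generalised ansatz functions $j^m z_k^{j-1-m}$ whose polynomial growth in $j$ is precisely what produces the $N^{l-1}$ prefactor in the residual bound. A secondary technical point is to ensure that the top-boundary linear system retains a non-trivial kernel under the perturbation $\mathbf E$; this is automatic since an $(l-1)\times(l-1+I)$ homogeneous system always has at least an $I$-dimensional kernel, regardless of the entries. The $N$-independent bound on the coefficients $(c_k)$ (and hence on $\|v\|_2$ from both sides) follows from the fact that the top linear system involves only the $l-1$ fixed top rows of $\widehat{\mathbf A}_{N,l}-\lambda I$, which are themselves $N$-independent once we fix the roots $z_k$.
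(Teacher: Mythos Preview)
Your proposal is essentially the same construction as the paper's: build the pseudo-eigenvector from the (at least $l$) roots of $f(z)=\lambda$ inside $\{|z|<r\}$, spend the free parameters to annihilate the top $l-1$ boundary residuals, and bound the surviving bottom residual by the geometric decay of the ansatz (the paper packages this last step via the condition number $\kappa(U)$ of the $N\times l$ Vandermonde matrix rather than by direct normalisation). Two small corrections: the bottom $l-1$ rows couple to entries $v_j$ with $j\geq N-2l+3$ (not $j\geq N-l+2$), which is exactly what yields the exponent $\rho^{N-2l+2}$; and the $N$-independent control on $(c_k)$ does not follow merely from the top system being $N$-independent, but from the fixed Vandermonde relation between $(c_k)$ and the first few entries of $v$---this is precisely where the paper's $\kappa(U)$ enters.
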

\begin{proof}
By symmetry, $\Omega^{1 / r}$ must satisfy an estimate of  type (\ref{equ:peudovectorperturbToeplitz1}) if $\Omega_r$ does. Thus, we only need to prove that $\Omega_r \subseteq \Lambda_{\varepsilon}\left(\widehat {\mathbf A}_{N,l}\right)$.

The idea is to construct geometrically decreasing pseudo-eigenvectors. Given any $r<1$, let $\lambda \in \Omega_r$ be arbitrary. Assume without loss of generality that $a_{-l+1} \neq 0$. The symbol is 
\[
f(z)= \sum_{q=-l+1}^{l-1} a_q z^{q}.
\]
Then $f(z)$ has a pole of order exactly $l-1$ at $z=0$. Since $I\left(f\left(\mathbb T_r\right), \lambda\right) \geqslant 1$ (by the definition of $\Omega_r$), it follows by the argument principle that the equation $f(z)=\lambda$ has at least $l$ solutions in $\{z\in \mathbb C: \babs{z}<r\}$, counted with their multiplicities. Let $z_1, \ldots, z_l$ be any $l$ of these solutions. Assume for the moment that the $z_j$'s are distinct. Corresponding to each $z_j$ is a vector $u_j=\left(1, z_j^{1}, z_j^{2}, \ldots, z_j^{(N-1)}\right)^T$. 
Since $\lambda = f(z_j) = \sum_{q=-l+1}^{l-1} a_q z_j^{q}$, we can compute that
\[
\left((\lambda I - \widehat {\mathbf A}_{N,l})u_j\right)_{q} = 0, \quad l\leq q \leq N-l,
\]
\[
\left((\lambda I - \widehat {\mathbf A}_{N,l})u_j\right)_{q} = \sum_{k=-l+1}^{2-q}a_kz_{j}^{k}z_j^{q-1}+  \sum_{k=1-q}^{l-1}-b_{k,q}z_{j}^{k}z_{j}^{q-1}, \quad 1\leq q\leq l-1,
\]
and 
\[
\left((\lambda I - \widehat {\mathbf A}_{N,l})u_j\right)_{q} = \sum_{k=N-q+1}^{l-1}a_kz_{j}^{k}z_j^{q-1} +  \sum_{k=1-l}^{N-q}-b_{k,q}z_{j}^{k}z_{j}^{q-1}, \quad N-l+2\leq q\leq N.
\]
Thus, we have
\begin{equation}\label{equ:proofpeudovectorperturbToeplitz1}
(\lambda I - \widehat {\mathbf A}_{N,l})u_j = z_j^{N-2l+2}Lu_j + z_j^{-N+2l-2}Ru_j,
\end{equation}
where $L$ is given by
\begin{align*} 
\left(\begin{array}{cccccccccccc}
0&\cdots&0&0& 0 &0&\cdots&0 & 0&\cdots&\cdots&0\\
0&\cdots&\vdots&\vdots & \vdots & \vdots &\vdots&\vdots& \vdots&\cdots&\cdots&0\\
0&\cdots&0&0& 0 &0&\cdots&0 & 0&\cdots&\cdots&0\\
-b_{1-l,\zeta}&\cdots&\cdots & \cdots &\cdots&-b_{l-2, \zeta}& a_{l-1}&\cdots&0 & \cdots &\cdots &0\\
0&\ddots&\ddots&\ddots & \ddots & \vdots&\vdots&\ddots&\ddots & \ddots&\ddots&0\\
0&\cdots&-b_{1-l,N-1}&\cdots&-b_{0, N-1}& -b_{1, N-1}&a_{2} & \cdots &a_{l-1} &0 &\cdots&0 \\
0&\cdots&\cdots&-b_{1-l,N}&\cdots&-b_{0, N}&a_{1} & \cdots &\cdots &a_{l-1}  &\cdots &0
\end{array}\right)
\end{align*}
with $\zeta = N-l+2$ and $R$ is given by
\begin{align*}
\left(\begin{array}{ccccccccccccc}
0&\cdots&a_{1-l}&\cdots & \cdots & a_{-1}& -b_{0,1}&-b_{1,1} &\cdots & -b_{l-1, 1}&\cdots&\cdots&0 \\
0&\cdots&0&a_{1-l} & \cdots & a_{-2} & -b_{-1,2} & -b_{0,2} & -b_{1,2} &\cdots & -b_{l-1, 2}&\cdots &0\\
0&\cdots&0&\ddots& \ddots & \vdots & \vdots&\ddots&\ddots&\ddots & \ddots&\ddots&0\\
0&\cdots&0&\ddots&\ddots & a_{1-l} &-b_{2-l, l-1}&\cdots&\cdots &\cdots&\cdots&\cdots&-b_{l-1,l-1}\\
0&\cdots&0&0&\cdots & 0 &0&\cdots&0 & 0&\cdots&\cdots&0\\
\vdots&\cdots&\vdots&\vdots&\vdots & \vdots & \vdots &\vdots&\vdots& \vdots&\cdots&\cdots&\vdots\\
0&\cdots&0&0&\cdots & 0 &0&\cdots&0 & 0&\cdots&\cdots&0\\
\end{array}\right).
\end{align*}
Since there are $l$ linearly independent vectors $R_{1:l-1}z_j^{-N+2l-2}u_j$ of length $l-1$, we can find $l$ complex numbers $c_j$ so that
\begin{equation}\label{equ:proofpeudovectorperturbToeplitz5}
\sum_{j=1}^{l}c_jR_{1:l-1}z_j^{-N+2l-2}u_j = \vect 0,
\end{equation}
and let
\[
\vect v=\sum_{j=1}^l c_j u_j.
\]
Then $z_j^{-N+2l-2} Ru_j =\vect 0$ in (\ref{equ:proofpeudovectorperturbToeplitz1}) and we have
\begin{equation}\label{equ:proofpeudovectorperturbToeplitz2}
\left(\lambda I-\widehat {\mathbf A}_{N,l}\right) \vect v=L \sum_{j=0}^l z_j^{N-2l+2} c_j u_j.
\end{equation}
We now need to relate the norm of the right-hand side of this equation to $\bnorm{\vect v}_2$. To do this, we write $\vect v=U c$, where $U$ is the $N \times l$ Vandermonde matrix whose columns are the vectors $u_j$, and $c=(c_1, \ldots, c_{l})^{\top}$. We let $D$ be the diagonal matrix with elements $z_1^{N-2l+2}, \ldots, z_l^{N-2l+2}$. Then, we have
\[
\bnorm{U D c}_2=\bnorm{U D U^{\dagger} U c}_2=\bnorm{U D U^{\dagger} \vect v}_2 \leqslant \kappa(U)\bnorm{D}_2\bnorm{\vect v}_2,
\]
where $U^{\dagger}$ denotes the pseudo-inverse of $U$ and $\kappa(U)=\sigma_1/\sigma_l$ is its condition number. Since $\bnorm{D}_2 \leqslant r^{N-2l+2}$, it follows that (\ref{equ:proofpeudovectorperturbToeplitz2}) implies
\begin{equation}\label{equ:proofpeudovectorperturbToeplitz3}
\frac{\bnorm{\left(\lambda I-\widehat {\mathbf A}_{N,l}\right) \vect v}_2}{\bnorm{\vect v}_2} \leqslant r^{N-2l+2} \kappa(U)\bnorm{L}_2.
\end{equation}
This completes the proof under the assumption that the roots $z_1, \ldots, z_l$ are distinct. We can handle the case of multiple roots as follows. If some of the roots $z_0, \ldots, z_l$ are confluent at some points $\lambda \in \bar{\Omega}_r$, we then analyse the problem by confluent Vandermonde matrix. More precisely, assuming that the root $z_j$ has multiplicity $k$, we have 
\[
f(z) -\lambda = P(z)(z-z_j)^k,
\]
which yields
\begin{equation}\label{equ:proofpeudovectorperturbToeplitz4}
f^{(1)}(z_j)=0,\ f^{(2)}(z_j) = 0,\ \ldots,\ f^{(k-1)}(z_j)=0.
\end{equation}
Now, we consider $g(z) = f(z)z^{s}, s=l-1,l,l+1, \ldots$ By (\ref{equ:proofpeudovectorperturbToeplitz4}), we have 
\begin{align*}
g^{(1)}(z_j) =& f^{(1)}(z_j)z_j^{s}+f(z_j)sz_j^{s-1}= f(z_j)sz_j^{s-1} = \lambda sz_j^{s-1}\\
g^{(2)}(z_j) =& \cdots = \lambda s(s-1)z_j^{s-2}\\
\vdots&\\
g^{(k-1)}(z_j) = & \cdots = \lambda s(s-1) \ldots (s-k+2)z_j^{s-(k-1)}.
\end{align*}
Since $f(z)z^s =\sum_{q=-l+1}^{l-1}a_q z^{q+s}$, for $s\geq l-1$, the above equalities give
\begin{align*}
\lambda sz_j^{s-1} = &\sum_{q=-l+1}^{l-1}a_q(q+s)z_j^{q+s-1}  \\
\lambda s(s-1)z_j^{s-2} = &\sum_{q=-l+1}^{l-1}a_q(q+s)(q+s-1)z_j^{q+s-2}  \\
&\vdots \\
\lambda s(s-1) \ldots (s-k+2)z_j^{s-(k-1)}=& \sum_{q=-l+1}^{l-1}a_q(q+s)(q+s-1)\ldots (q+s-k+2)z_j^{q+s-(k-1)}. 
\end{align*}
This implies that we can use 
\begin{align*}
\begin{pmatrix}
1\\
z_j\\
z_{j}^{2}\\
\vdots\\
\vdots\\
z_{j}^{(N-1)}
\end{pmatrix}, \quad \begin{pmatrix}
0\\
1\\
2z_j\\
3z_{j}^{2}\\
\vdots\\
\vdots\\
(N-1)z_{j}^{(N-2)}
\end{pmatrix},\quad \ldots,\quad  \begin{pmatrix}
0\\
0\\
\vdots\\
(k-1)!\\
k(k-1)\cdots 2z_j\\
\vdots\\
(N-1)\cdots(N-(k-1))z_{j}^{N-k-2}
\end{pmatrix},
\end{align*}
and other vectors corresponding to other roots $z_j$'s to construct the pseudo-eigenvector. Instead, we consider to use 
\begin{align}\label{equ:proofpeudovectorperturbToeplitz6}
\begin{pmatrix}
1\\
z_j\\
z_{j}^{2}\\
\vdots\\
\vdots\\
z_{j}^{(N-1)}
\end{pmatrix}, \quad \begin{pmatrix}
0\\
z_j\\
2z_j^2\\
3z_{j}^{3}\\
\vdots\\
\vdots\\
(N-1)z_{j}^{(N-1)}
\end{pmatrix},\quad \ldots,\quad  \begin{pmatrix}
0\\
0\\
\vdots\\
(k-1)!z_j^{k-1}\\
k(k-1)\cdots 2z_j^{k}\\
\vdots\\
(N-1)\cdots(N-(k-1))z_{j}^{N-1}
\end{pmatrix}
\end{align}
to construct the  pseudo-eigenvector.

The above discussion shows that by considering the confluent Vandermonde matrix, we can always find such $l$ linearly independent vectors like (\ref{equ:proofpeudovectorperturbToeplitz6}) to construct the pseudo-eigenvector. Then an analysis involving confluent Vandermonde matrices yields a bound analogous to (\ref{equ:proofpeudovectorperturbToeplitz3}) except with $r^{N-2l+2}$ replaced by an algebraically growing factor at worst as $N^l r^{N-2l+2}$. This proves (\ref{equ:pseudoeigenvectorequ-2}).

Now, we prove (\ref{equ:pseudoeigenvectorequ-1}). As the pseudo-eigenvector is constructed by $\vect v = \sum_{j=1}^l c_j u_j$ with $c_j$'s satisfying (\ref{equ:proofpeudovectorperturbToeplitz5}) and $u_j$'s from (\ref{equ:proofpeudovectorperturbToeplitz6}), by the form of vectors in (\ref{equ:proofpeudovectorperturbToeplitz6}), we obtain
\[
\frac{\left|\vect v_j\right|}{\max_{j}\babs{\vect v_j}} \leq
C_3\rho^{j-1}, \ 1 \leq j \leq N, \text { if } \lambda \in \Omega_r,
\]
with $C_3$ being independent of $N$. 


\end{proof}

\printbibliography

@article {bandalpha,
    AUTHOR = {Ammari, Habib and Fitzpatrick, Brian and Lee, Hyundae and Yu,
              Sanghyeon and Zhang, Hai},
     TITLE = {Subwavelength phononic bandgap opening in bubbly media},
   JOURNAL = {J. Differential Equations},
  FJOURNAL = {Journal of Differential Equations},
    VOLUME = {263},
      YEAR = {2017},
    NUMBER = {9},
     PAGES = {5610--5629},
}

@article {experimental3d,
    AUTHOR = {Zou, Deyuan and  Chen, Tian and He, Wenjing and Bao, Jiacheng
and Lee, Ching Hua and Sun, Houjun and Zhang, Xiangdong},
     TITLE = {Observation of hybrid higher-order skin-topological effect in non-Hermitian topolectrical circuits},
   JOURNAL = {Nature Communications},
    VOLUME = {12},
      YEAR = {2021},
    NUMBER = {1},
     PAGES = {7201},
}

@article {alphalipschitz,
    AUTHOR = {Ammari, Habib and Davies, Bryn and Hiltunen, Erik Orvehed and
              Lee, Hyundae and Yu, Sanghyeon},
     TITLE = {Exceptional points in parity-time-symmetric subwavelength
              metamaterials},
   JOURNAL = {SIAM J. Math. Anal.},
  FJOURNAL = {SIAM Journal on Mathematical Analysis},
    VOLUME = {54},
      YEAR = {2022},
    NUMBER = {6},
     PAGES = {6223--6253},
}

@book{ammari2018mathematical,
	title={Mathematical and Computational Methods in Photonics and Phononics},
	author={Ammari, Habib and Fitzpatrick, Brian and Kang, Hyeonbae and Ruiz, Matias and Yu, Sanghyeon and Zhang, Hai},
	series={Mathematical Surveys and Monographs},
	volume={235},
	year={2018},
	publisher={American Mathematical Society, Providence}
}

@book {harmonicanalysis,
    AUTHOR = {Katznelson, Yitzhak},
     TITLE = {An introduction to harmonic analysis},
    SERIES = {Cambridge Mathematical Library},
   EDITION = {Third},
 PUBLISHER = {Cambridge University Press, Cambridge},
      YEAR = {2004},
     PAGES = {xviii+314},
}

@article {haicmp,
    AUTHOR = {Ammari, Habib and Zhang, Hai},
     TITLE = {A mathematical theory of super-resolution by using a system of
              sub-wavelength {H}elmholtz resonators},
   JOURNAL = {Comm. Math. Phys.},
  FJOURNAL = {Communications in Mathematical Physics},
    VOLUME = {337},
      YEAR = {2015},
    NUMBER = {1},
     PAGES = {379--428},
}

@article {vodev,
    AUTHOR = {Vodev, G.},
     TITLE = {Existence of {R}ayleigh resonances exponentially close to the
              real axis},
   JOURNAL = {Ann. Inst. H. Poincar\'{e} Phys. Th\'{e}or.},
  FJOURNAL = {Annales de l'Institut Henri Poincar\'{e}. Physique
              Th\'{e}orique},
    VOLUME = {67},
      YEAR = {1997},
    NUMBER = {1},
     PAGES = {41--57},
}

@article {santosa,
    AUTHOR = {Gopalakrishnan, J. and Moskow, S. and Santosa, F.},
     TITLE = {Asymptotic and numerical techniques for resonances of thin
              photonic structures},
   JOURNAL = {SIAM J. Appl. Math.},
  FJOURNAL = {SIAM Journal on Applied Mathematics},
    VOLUME = {69},
      YEAR = {2008},
    NUMBER = {1},
     PAGES = {37--63},
}

@article{ammari2020topologically,
  title={Topologically protected edge modes in one-dimensional chains of subwavelength resonators},
  author={Ammari, Habib and Davies, Bryn and Hiltunen, Erik Orvehed and Yu, Sanghyeon},
  journal={Journal de Math{\'e}matiques Pures et Appliqu{\'e}es},
  volume={144},
  pages={17--49},
  year={2020},
}

@article {alexander,
    AUTHOR = {Ammari, Habib and Dabrowski, Alexander and Fitzpatrick, Brian
              and Millien, Pierre},
     TITLE = {Perturbation of the scattering resonances of an open cavity by
              small particles. {P}art {I}: the transverse magnetic
              polarization case},
   JOURNAL = {Z. Angew. Math. Phys.},
  FJOURNAL = {Zeitschrift f\"{u}r Angewandte Mathematik und Physik. ZAMP.
              Journal of Applied Mathematics and Physics. Journal de
              Math\'{e}matiques et de Physique Appliqu\'{e}es},
    VOLUME = {71},
      YEAR = {2020},
    NUMBER = {4},
     PAGES = {Paper No. 102, 21},
}

@incollection {valli,
    AUTHOR = {Alonso Rodriguez, Ana and Bertolazzi, Enrico and Valli,
              Alberto},
     TITLE = {The curl-div system: theory and finite element approximation},
 BOOKTITLE = {Maxwell's equations---analysis and numerics},
    SERIES = {Radon Ser. Comput. Appl. Math.},
    VOLUME = {24},
     PAGES = {1--43},
 PUBLISHER = {De Gruyter, Berlin},
      YEAR = {2019},
}

@article{ammari2023spectral,
  title={Spectral convergence in large finite resonator arrays: the essential spectrum and band structure},
  author={Ammari, Habib and Davies, Bryn and Hiltunen, Erik Orvehed},
  journal={arXiv preprint arXiv:2305.16788},
  year={2023}
}

@book {bookspectral,
    AUTHOR = {Ammari, Habib and Kang, Hyeonbae and Lee, Hyundae},
     TITLE = {Layer potential techniques in spectral analysis},
    SERIES = {Mathematical Surveys and Monographs},
    VOLUME = {153},
 PUBLISHER = {American Mathematical Society, Providence, RI},
      YEAR = {2009},
     PAGES = {vi+202},
}

@article{SkinStability,
  title={Stability of the non-Hermitian skin effect},
  author={Ammari, Habib and Barandun, Silvio and Davies, Bryn and Hiltunen, Erik Orvehed and Liu, Ping},
  journal={arXiv preprint arXiv:2308.06124},
  year={2023}
}

@article{yueh.cheng2008Explicit,
  title = {Explicit Eigenvalues and Inverses of Tridiagonal {{Toeplitz}} Matrices with Four Perturbed Corners},
  author = {Yueh, Wen-Chyuan and Cheng, Sui Sun},
  year = {2008},
  journaltitle = {Anziam Journal},
  journal = {ANZIAM J.},
  volume = {49},
  number = {3},
  pages = {361--387},
}

@article{zhang.zhang.ea2022review,
  title = {A Review on Non-{{Hermitian}} Skin Effect},
  author = {Zhang, Xiujuan and Zhang, Tian and Lu, Ming-Hui and Chen, Yan-Feng},
  year = {2022},
  journal= {Advances in Physics: X},
  volume = {7},
  number = {1},
  pages = {2109431},
}

@article{yokomizo.yoda.ea2022NonHermitian,
  title = {Non-{{Hermitian}} Waves in a Continuous Periodic Model and Application to Photonic Crystals},
  author = {Yokomizo, Kazuki and Yoda, Taiki and Murakami, Shuichi},
  year = {2022},
  journal = {Phys. Rev. Res.},
  volume = {4},
  number = {2},
  pages = {023089},
}

@article{ammari.davies.ea2021Functional,
  title = {Functional Analytic Methods for Discrete Approximations of Subwavelength Resonator Systems},
  author = {Ammari, Habib and Davies, Bryn and Hiltunen, Erik Orvehed},
journal={arXiv preprint arXiv:2106.12301},
  year={2021}
}

@article {finiteinfinite,
    AUTHOR = {Ammari, Habib and Davies, Bryn and Hiltunen, Erik Orvehed},
     TITLE = {Convergence {R}ates for {D}efect {M}odes in {L}arge {F}inite
              {R}esonator {A}rrays},
   JOURNAL = {SIAM J. Math. Anal.},
  FJOURNAL = {SIAM Journal on Mathematical Analysis},
    VOLUME = {55},
      YEAR = {2023},
    NUMBER = {6},
     PAGES = {7616--7634},
}

@article{ammari2023mathematical,
      title={Mathematical foundations of the non-Hermitian skin effect}, 
      author={Habib Ammari and Silvio Barandun and Jinghao Cao and Bryn Davies and Erik Orvehed Hiltunen},
 journal={arXiv preprint arXiv:2306.15587},
  year={2023}
}

@article {da2007characteristic,
    AUTHOR = {da Fonseca, C.M.},
     TITLE = {The characteristic polynomial of some perturbed tridiagonal
              {$k$}-{T}oeplitz matrices},
   JOURNAL = {Appl. Math. Sci. (Ruse)},
  FJOURNAL = {Applied Mathematical Sciences},
    VOLUME = {1},
      YEAR = {2007},
    NUMBER = {1-4},
     PAGES = {59--67},
}

@incollection {gover1994eigenproblem,
    AUTHOR = {Gover, M. J. C.},
     TITLE = {The eigenproblem of a tridiagonal {$2$}-{T}oeplitz matrix},
      NOTE = {Second Conference of the International Linear Algebra Society
              (ILAS) (Lisbon, 1992)},
   JOURNAL = {Linear Algebra Appl.},
  FJOURNAL = {Linear Algebra and its Applications},
    VOLUME = {197/198},
      YEAR = {1994},
     PAGES = {63--78},
}

@article{bottcher2017asymptotics,
  title={Asymptotics of eigenvalues and eigenvectors of Toeplitz matrices},
  author={B{\"o}ttcher, Albrecht and Bogoya, Johan Manuel and Grudsky, SM and Maximenko, Egor Anatol’evich},
  journal={Sbornik: Mathematics},
  volume={208},
  number={11},
  pages={1578},
  year={2017},
}

@article{reichel1992eigenvalues,
  title={Eigenvalues and pseudo-eigenvalues of Toeplitz matrices},
  author={Reichel, Lothar and Trefethen, Lloyd N},
  journal={Linear algebra and its applications},
  volume={162},
  pages={153--185},
  year={1992},
}

@book{trefethen2005spectra,
    AUTHOR = {Trefethen, Lloyd N. and Embree, Mark},
     TITLE = {Spectra and pseudospectra},
      NOTE = {The behavior of nonnormal matrices and operators},
 PUBLISHER = {Princeton University Press, Princeton, NJ},
      YEAR = {2005},
     PAGES = {xviii+606},
}

@article{skinadd1,
  title = {Robust light transport in non-Hermitian photonic lattices},
  author = {Longhi, S. and Gatti, D. and Valle, G.D.},
  date = {2015},
  journaltitle = {Scientific reports},
  volume = {5},
 pages ={13376},
}

@article{skinadd2,
  title = {Non-Hermitian Physics without Gain or Loss: The Skin Effect of Reflected Waves},
  author = {Franca, S. and Könye, V. and  Hassler, F. and van den Brink, J. and Fulga, C.},
  date = {2022},
  journaltitle = {Phys. Rev. Lett. },
  volume = {129},
  pages = {086601},
}

@article{skinadd3,
  title = {Non-Hermitian morphing of topological modes},
  author = {Wang, W. and Wang, X. and Ma, G.},
  date = {2022},
  journaltitle = {Nature},
  volume = {608},
  pages = {50--55},
}

@article{skinadd4,
  title = {Non-Hermitian photonic lattices: tutorial},
  author = {Wang, Q.  and Chong, Y.D.},
  date = {2023},
  journaltitle = {Journal of the Optical Society of America B},
  volume = {40},
  issue={6},
  pages = {1443--1466},
}

@article{skinadd5,
  title = {Edge Modes, Degeneracies, and Topological Numbers in Non-Hermitian Systems},
  author = {Leykam, D. and  Bliokh, K.Y. and Huang, C. and 
  Chong, Y.D. and Nori, F. },
  date = {2017},
  journaltitle = {Phys. Rev. Lett.},
  volume = {118},
  pages = {040401},
}

@misc{lin.tai.ea2023Topological,
  title = {Topological Non-Hermitian Skin Effect},
  author = {Lin, Rijia and Tai, Tommy and Li, Linhu and Lee, Ching Hua},
  date = {2023},
  eprint = {2302.03057},
  eprinttype = {arxiv},
}

@article{okuma.kawabata.ea2020Topological,
  title = {Topological Origin of Non-{{Hermitian}} Skin Effects},
  author = {Okuma, Nobuyuki and Kawabata, Kohei and Shiozaki, Ken and Sato, Masatoshi},
  date = {2020},
  journaltitle = {Physical Review Letters},
  shortjournal = {Phys. Rev. Lett.},
  volume = {124},
  number = {8},
  pages = {086801, 7},
}

@article{borgnia.kruchkov.ea2020Nonhermitian,
  title = {Non-Hermitian Boundary Modes and Topology},
  author = {Borgnia, Dan S. and Kruchkov, Alex Jura and Slager, Robert-Jan},
  date = {2020-02},
  journaltitle = {Physical Review Letters},
  shortjournal = {Phys. Rev. Lett.},
  volume = {124},
  number = {5},
  pages = {056802},
}

@article{dimerSkin,
      title={Perturbed Block Toeplitz matrices and the non-Hermitian skin effect in dimer systems of subwavelength resonators}, 
      author={Habib Ammari and Silvio Barandun and Ping Liu},
 journal={arXiv preprint arXiv:2307.13551},
  year={2023},
}

@article{ghatak.brandenbourger.ea2020Observation,
  title = {Observation of Non-{{Hermitian}} Topology and Its Bulk\textendash Edge Correspondence in an Active Mechanical Metamaterial},
  author = {Ghatak, Ananya and Brandenbourger, Martin and Van Wezel, Jasper and Coulais, Corentin},
  date = {2020},
  journaltitle = {Proceedings of the National Academy of Sciences},
  volume = {117},
  number = {47},
  pages = {29561--29568},
}

@book {nedelec,
    AUTHOR = {N\'{e}d\'{e}lec, Jean-Claude},
     TITLE = {Acoustic and electromagnetic equations},
    SERIES = {Applied Mathematical Sciences},
    VOLUME = {144},
      NOTE = {Integral representations for harmonic problems},
 PUBLISHER = {Springer-Verlag, New York},
      YEAR = {2001},
     PAGES = {x+316},
}
\end{document}

\begin{thm}\label{thm:pseduoeigenvectorthm0}
For any $\epsilon>0$, let $\lambda$ be any complex number with $I(f(\mathbb T), \lambda) \neq 0$ for $f$ defined by (\ref{equ:bandedsymbol1}). For some $\rho<1$ and any $N>N_0$ with $N_0$ being sufficiently large, there exist non-zero pseudo-eigenvectors $\mathbf{v}^{(N)}$ of $\mathcal C_{N,k}^{\gamma}$ with $\bnorm{\vect v^{(N)}}_2=1$ satisfying
\begin{equation}\label{equ:pseudovectorcapamatrixequ0}
\bnorm{\left(\mathcal C_{N,k}^{\gamma}-\lambda I\right) \mathbf{v}^{(N)}}_2 < \max\left(C_1, C_2 N^{k-1}\right) \rho^{N-2k+2}+ \delta K \epsilon,
\end{equation}
such that
\begin{equation}\label{equ:pseudovectorcapamatrixequ1}
\frac{\left|\left(\vect v^{(N)}\right)_j\right|}{\max_{j}\babs{\left(\vect v^{(N)}\right)_j}} \leq\left\{\begin{array}{ll}
C_3\rho^{j-1}, & \text { if } I(f(\mathbb T), \lambda)>0, \\
\nm
C_3\rho^{N-j}, & \text { if } I(f(\mathbb T), \lambda)<0,
\end{array} \quad 1 \leq j \leq N, \right.
\end{equation}
where $C_1, C_2,$ and $C_3$ are independent of $N$. In particular, the constant $\rho$ can be taken to be any number so that $1>\rho\geq r$ with $I\left(f\left(\mathbb T_r\right), \lambda\right)>0$ or $\frac{1}{\rho}\geq\frac{1}{r}>1$ with $I\left(f\left(\mathbb T_{\frac{1}{r}}\right), \lambda\right)<0$. Let $\rho=r$, then $N$ is uniform for all the $\lambda$ satisfying $I\left(f\left(\mathbb T_r\right), \lambda\right)>0$ or $I\left(f\left(\mathbb T_{\frac{1}{r}}\right), \lambda\right)<0$.
\end{thm}
\begin{proof}
We decompose the $k$-banded matrix ${\mathcal C}_{N, k}^{\gamma}$ as follows
\[
{\mathcal C}_{N,k}^{\gamma} = A+ M,
\]
where the entries of the matrices $A$ and $M$ are respectively given by
\begin{equation}
A_{i,j}=\begin{cases}
\left({\mathcal C}_{N,k}^{\gamma} \right)_{i,j}, & 1\leq i\leq k, \text{ or } N-(k-1)\leq i\leq N, \\
\nm
\mathcal{C}_{i,j}^\gamma,  &\babs{i-j}< k, k<i< N-(k-1),\\
0, & \babs{i-j}\geq k,
\end{cases}
\end{equation}
and 
\begin{equation}
M_{i,j} = \begin{cases}
0, & 1\leq i\leq k, \text{ or } N-(k-1)\leq i\leq N,\\
\left({\mathcal C}_{N,k}^{\gamma} \right)_{i,j} - \mathcal{C}_{i,j}^\gamma,  &\babs{i-j}< k, k<i< N-(k-1),\\
0, & \babs{i-j}\geq k.
\end{cases}
\end{equation}
Note that $A$ is a $k$-banded perturbed Toeplitz matrix like the matrix in (\ref{equ:perturbedtoeplitzmatrix1}). We let $f$ be  defined by (\ref{equ:bandedsymbol1}). For any complex  number $\lambda$ with $I(f(\mathbb T), \lambda) \neq 0$, Theorem \ref{thm:peudovectorperturbToeplitz1} shows that there exist non-zero pseudo-eigenvectors $\mathbf{v}^{(N)}$ with $\bnorm{\vect v^{(N)}}_2=1$ satisfying
$$
\bnorm{\left(A-\lambda I\right) \mathbf{v}^{(N)}}_2 \leq \max\left(C_1, C_2 N^{k-1}\right) \rho^{N-2k+2},
$$
for some $\rho<1$ and sufficiently large $N>4k$, and such that
\begin{equation*}
\frac{\left|\left(\vect v^{(N)}\right)_j\right|}{\max_{j}\babs{\left(\vect v^{(N)}\right)_j}} \leq\left\{\begin{array}{ll}
C_3\rho^{j-1}, & \text { if } I(f(\mathbb T), \lambda)>0, \\
\nm
C_3\rho^{N-j}, & \text { if } I(f(\mathbb T), \lambda)<0,
\end{array} \quad 1 \leq j \leq N, \right.
\end{equation*}
where $C_1, C_2,$ and $C_3$ are independent of $N$. Since $\bnorm{\vect v^{(N)}}_2=1$, we actually have 
\begin{equation}\label{equ:proofbandedcapamatrix4}
\left|\left(\vect v^{(N)}\right)_j\right| \leq\left\{\begin{array}{ll}
C_3\rho^{j-1}, & \text { if } I(f(\mathbb T), \lambda)>0, \\
\nm
C_3\rho^{N-j}, & \text { if } I(f(\mathbb T), \lambda)<0,
\end{array} \quad 1 \leq j \leq N. \right.
\end{equation}
In the following arguments, we only consider the case when $I(f(\mathbb T), \lambda)>0$, since the arguments for the other case are similar. To prove (\ref{equ:pseudovectorcapamatrixequ0}), we  estimate $\bnorm{M\vect v^{(N)}}_2$. By the definition of $M$, we have $\left(M \vect v^{(N)}\right)_{j}=0$ for $1\leq j\leq k, N-(k-1)\leq j\leq N$. Now, we choose $N$ large enough so that 
\begin{equation}\label{equ:proofbandedcapamatrix2}
C_3\rho^{\frac{N}{4}-k}<1.
\end{equation}
For $k+1\leq j\leq \lceil\frac{N}{4}\rceil$, we have  
\begin{align*}
\babs{\left(M \vect v^{(N)}\right)_{j}} =& \babs{\sum_{q=j+1-k}^{j+k-1}M_{j, q}\left(\vect v^{(N)}\right)_q} \leq \sum_{q=j+1-k}^{j+k-1}\babs{M_{j, q}}\babs{\left(\vect v^{(N)}\right)_q} \\
\leq & \sum_{q=j+1-k}^{j+k-1} \frac{\delta K}{(1+\babs{\frac{N}{2}-j})(1+\babs{\frac{N}{2}-q})}\quad \left(\text{by Lemma \ref{lem:capacitancetotoeplitz2} and $\bnorm{\vect v^{(N)}}_2=1$}\right)\\
\leq & \frac{(2k-1)\delta K}{\frac{N}{4}(\frac{N}{4}-k)}.
\end{align*}
Therefore, 
\begin{equation}\label{equ:proofbandedcapamatrix1}
\sum_{j=k+1}^{\lceil\frac{N}{4}\rceil}\babs{\left(M \vect v^{(N)}\right)_{j}}^2\leq \left(\frac{(2k-1)\delta K}{\frac{N}{4}(\frac{N}{4}-k)}\right)^2\left(\frac{N}{4}+1-k\right)\leq \frac{1}{2}(\delta K \epsilon)^2,
\end{equation}
for sufficiently large $N$. For $\frac{N}{4}< j\leq N$, we have  
\begin{align*}
\babs{\left(M \vect v^{(N)}\right)_{j}} =& \babs{\sum_{q=j+1-k}^{j+k-1}M_{j, q}\left(\vect v^{(N)}\right)_q} \leq \sum_{q=j+1-k}^{j+k-1}\babs{M_{j, q}}\babs{\left(\vect v^{(N)}\right)_q} \\
\leq & \sum_{q=j+1-k}^{j+k-1} C_3\rho^{j-1}\frac{\delta K}{(1+\babs{\frac{N}{2}-j})(1+\babs{\frac{N}{2}-q})}\ \left(\text{by Lemma \ref{lem:capacitancetotoeplitz2} and 
 (\ref{equ:proofbandedcapamatrix4})}\right)\\
= & \frac{1}{1+\babs{\frac{N}{2}-j}}\sum_{q=j+1-k}^{j+k-1}C_3\rho^{q-1} \frac{\delta K}{1+\babs{\frac{N}{2}-q}}\\
<&\frac{1}{1+\babs{\frac{N}{2}-j}}\sum_{q=j+1-k}^{j+k-1}\rho^{\rho-\frac{N}{4}+k-1} \frac{\delta K}{1+\babs{\frac{N}{2}-q}} \quad \left(\text{by 
 (\ref{equ:proofbandedcapamatrix2})}\right)\\
< &\frac{(\ln(2k)+1)\delta K}{1+\babs{\frac{N}{2}-j}}\rho^{j-\frac{N}{4}}.
\end{align*}
Therefore, 
\begin{align*}
\sum_{j=\lceil\frac{N}{4}\rceil+1}^{N}\babs{\left(M \vect v^{(N)}\right)_{j}}^2 \leq &
(\delta K (\ln(2k)+1))^2 \sum_{j=\lceil\frac{N}{4}\rceil+1}^{N}\left(\frac{1}{1+\babs{\frac{N}{2}-j}}\rho^{j-\frac{N}{4}}\right)^2\\
\leq & \frac{(\delta K (\ln(2k)+1))^2}{(\frac{N}{4})^2}\sum_{j=\lceil\frac{N}{4}\rceil+1}^{N}\left(\frac{\frac{N}{4}}{1+\babs{\frac{N}{2}-j}}\rho^{j-\frac{N}{4}}\right)^2\\
=& \frac{(\delta K (\ln(2k)+1))^2}{(\frac{N}{4})^2}\sum_{j=\lceil\frac{N}{4}\rceil+1}^{N}\left(\frac{\frac{N}{4}}{1+\babs{\frac{N}{2}-j}}s^{j-\frac{N}{4}}(\rho/s)^{j-\frac{N}{4}}\right)^2
\end{align*}
where $s:=\min_{t=1,\cdots, \frac{N}{4}-1}\left(\frac{\frac{N}{4}-t}{\frac{N}{4}}\right)^{\frac{1}{t}}\geq \frac{\frac{N}{4}-1}{\frac{N}{4}}$. Thus we have 
\begin{align*}
\sum_{j=\lceil\frac{N}{4}\rceil+1}^{N}\babs{\left(M \vect v^{(N)}\right)_{j}}^2 \leq &\frac{(\delta K (\ln(2k)+1))^2}{(\frac{N}{4})^2}\sum_{j=\lceil\frac{N}{4}\rceil+1}^{N}\left((\rho/s)^{j-\frac{N}{4}}\right)^2\\
< & \frac{(\delta K (\ln(2k)+1))^2}{(\frac{N}{4})^2} \frac{1}{1-(\rho/s)^2}.
\end{align*}
This gives 
\begin{equation}\label{equ:proofbandedcapamatrix3}
\sum_{j=\lceil\frac{N}{4}\rceil+1}^{N}\babs{\left(M \vect v^{(N)}\right)_{j}}^2 \leq \frac{1}{2}(\delta K \epsilon)^2 
\end{equation}
for sufficiently large $N$. It then follows from (\ref{equ:proofbandedcapamatrix1}) and (\ref{equ:proofbandedcapamatrix3}) that 
\begin{equation}
\bnorm{M \vect v^{(N)}}_2 \leq   \delta K \epsilon
\end{equation}
and 
\begin{align*}
\bnorm{\left(\mathcal C_{N,k}^{\gamma}-\lambda I\right) \mathbf{v}^{(N)}}_2\leq& \bnorm{\left(A-\lambda I\right) \mathbf{v}^{(N)}}_2 + \bnorm{M\vect v^{(N)}}_2 \\
< &\max\left(C_1, C_2 N^{k-1}\right) \rho^{N-2k+2}+\delta K \epsilon.\qedhere
\end{align*}
\end{proof}

\appendix
\section{skin effect}
The following result proves the skin effect in three-dimensional systems of subwavelength resonators with imaginary gauge potentials. 
\begin{thm}[Skin effect] Assuming the off-tridiagonal matrix elements of $C_\mathrm{f}$ to be of order $\delta$. Then the decaying of the eigenvectors is given by the factor 
$$\frac{\int_{\partial D_i} e^{\gamma (x_1-x_1^{i+1})}(\mathcal{S}^0_{\mathcal{D}})^{-1}[\chi_{\partial D_{i+1}}] \mathrm{d}x}{ \int_{\partial D_{i+1}} e^{\gamma (x_1-x_1^{i})}(\mathcal{S}^0_{\mathcal{D}})^{-1}[\chi_{\partial D_{i}}] \mathrm{d}x}.$$
\end{thm}
\textcolor{red}{\begin{proof}
    We first decompose the matrix $\tilde{C}_{\mathrm{f},0}$ as follows
    \begin{equation}
        \tilde{C}_{\mathrm{f},0} = \tilde{C}_T + L,
    \end{equation}
    where $\tilde{C}_T$ is the tridiagonal part of the matrix and $L$ describes the long range interactions in the gauge capacitance matrix. By Lemma \ref{lemma: eigenpairs of tridiag toeplitz}, the exponentially decaying behaviour of the eigenvectors of $\tilde{C}_T$ is determined by the factor
    \begin{equation}
    \begin{split}
         s & = \frac{(C_\textrm{f})_{i+1,i}}{(C_\textrm{f})_{i,i+1}} = \frac{\int_{D_{i+1}}e^{\gamma x_1}\ \mathrm{d}x\int_{\partial D_i} e^{\gamma x_1}(\mathcal{S}^0_{\mathcal{D}})^{-1}[\chi_{\partial D_{i+1}}] \mathrm{d}x}{\int_{D_i}e^{\gamma x_1}\ \mathrm{d}x \int_{\partial D_{i+1}} e^{\gamma x_1}(\mathcal{S}^0_{\mathcal{D}})^{-1}[\chi_{\partial D_{i}}] \mathrm{d}x}\\
         & = \frac{\int_{\partial D_i} e^{\gamma (x_1-x_1^{i+1})}(\mathcal{S}^0_{\mathcal{D}})^{-1}[\chi_{\partial D_{i+1}}] \mathrm{d}x}{ \int_{\partial D_{i+1}} e^{\gamma (x_1-x_1^{i})}(\mathcal{S}^0_{\mathcal{D}})^{-1}[\chi_{\partial D_{i}}] \mathrm{d}x}
    \end{split}
    \end{equation}
    for some $i$. Now by eigenvector perturbation theory, the eigenvectors of $\tilde{C}_{\mathrm{f},0}$ are the eigenvectors of $\tilde{C}_T$ up to an error of the same order as the norm of $L$. This proves that $\tilde{C}_{\mathrm{f},0}$ has decaying eigenvectors. Since $C_\mathrm{f}$ is asymptotically equivalent to $\tilde{C}_{\mathrm{f},0}$, they have the same eigenvalue distributions and the eigenvectors of $C_\mathrm{f}$ exhibits the same decaying behaviour.
\end{proof}}
\section{Periodic case}
\subsection{Periodic gauge capacitance matrix}

\begin{rem}
    Let now $\gamma_i\in\mathbb{R}$ for $i=1,..,N$. The above formulae can be easily generalized for the following systems:
    \begin{equation}
    \label{helmholtzi}
    \begin{cases}
    \ds \Delta u + \omega^2u = 0 \ & \text{in } \R^3 \setminus \overline{\mathcal{D}}, \\
    \nm
    \ds \Delta u + \omega^2 u +\gamma_i \partial_1 u = 0 \ & \text{in } \mathcal{D}_i, \\
    \nm
    \ds  u|_{+} -u|_{-}  = 0  & \text{on } \p \mathcal{D}, \\
    \nm
    \ds \left.\delta \frac{\p u}{\p \nu} \right|_{+} - \left.\frac{\p u}{\p \nu} \right|_{-} = 0 & \text{on } \p \mathcal{D}, \\
    \nm
    \ds \lim_{|x|\to\infty}|x|\left( \frac{\p}{\p|x|}-i \omega \right)u(x) = 0. &   
    \end{cases}
\end{equation}
with the capacitance matrix coefficients:
\begin{equation}
    \mathcal{C}^\alpha_{ij} = -\frac{\delta_iv_i^2}{\int_{D_i}e^{\gamma_i x_1}\ \mathrm{d}V}\int_{\partial D_i} e^{\gamma_i x_1}(\mathcal{S}^{\alpha,0}_D)^{-1}[\chi_{\partial D_j}] \mathrm{d}x
\end{equation}
\end{rem}

\subsection{Finite-infinite convergence}

\begin{thm}
Let $\left\{\mathbf{A}_N\right\}$ be a family of banded or semi-banded Toeplitz matrices as defined above, and let $\lambda$ be any complex number with $I(f, \lambda) \neq 0$. Then for some $M>1$ and all sufficiently large $N$,
$$
\left\|\left(\lambda-\mathbf{A}_N\right)^{-1}\right\| \geq M^N,
$$
and there exist non-zero pseudo-eigenvectors $\mathbf{v}^{(N)}$ satisfying
$$
\frac{\left\|\left(\mathbf{A}_N-\lambda\right) \mathbf{v}^{(N)}\right\|}{\left\|\mathbf{v}^{(N)}\right\|} \leq M^{-N}
$$
such that
\begin{equation}\label{equ:pseudoeigenvectorequ-1}
\frac{\left|v_j^{(N)}\right|}{\max _j\left|v_j^{(N)}\right|} \leq\left\{\begin{array}{ll}
M^{-j} & \text { if } I(f, \lambda)<0, \\
M^{j-N} & \text { if } I(f, \lambda)>0,
\end{array} \quad 1 \leq j \leq N .\right.
\end{equation}
The constant $M$ can be taken to be any number for which $f(z) \neq \lambda$ in the annulus $1 \leq|z| \leq M$ (if $I(f, \lambda)<0$ ) or $M^{-1} \leq|z| \leq 1$ (if $I(f, \lambda)>0)$.
\end{thm}
\begin{proof}
See \cite[Theorem 7.2]{trefethen2005spectra}.
\end{proof}

\end{document}

